\providecommand{\tabularnewline}{\\}
\theoremstyle{plain}
\newtheorem{thm}{\protect\theoremname}[section]
\theoremstyle{plain}
\newtheorem{cor}[thm]{\protect\corollaryname}
\theoremstyle{plain}
\newtheorem{conjecture}[thm]{\protect\conjecturename}
\theoremstyle{remark}
\newtheorem{rem}[thm]{\protect\remarkname}
\theoremstyle{plain}
\newtheorem{prop}[thm]{\protect\propositionname}
\theoremstyle{definition}
\newtheorem*{example*}{\protect\examplename}
\theoremstyle{definition}
\newtheorem{defn}[thm]{\protect\definitionname}
\theoremstyle{plain}
\newtheorem{lem}[thm]{\protect\lemmaname}
\theoremstyle{remark}
\newtheorem*{rem*}{\protect\remarkname}
\theoremstyle{definition}
\providecommand{\conjecturename}{Conjecture}
\providecommand{\corollaryname}{Corollary}
\providecommand{\definitionname}{Definition}
\providecommand{\examplename}{Example}
\providecommand{\lemmaname}{Lemma}
\providecommand{\propositionname}{Proposition}
\providecommand{\remarkname}{Remark}
\providecommand{\theoremname}{Theorem}
\begin{document}
\global\long\def\C{\mathbb{C}}%
\global\long\def\cnk{\left(\C^{n}\right)^{\otimes k}}%
\global\long\def\P{P_{k}(n)}%
\global\long\def\A{A_{k}(n)}%
\global\long\def\element{\left(e_{1}-e_{2}\right)\otimes\dots\otimes\left(e_{2k-1}-e_{2k}\right)}%
\global\long\def\Sn{S_{n}}%
\global\long\def\endsn{\mathrm{End}{}_{\Sn}}%
\global\long\def\xilambda{\sum_{\sigma\in S_{k}}\frac{d_{\lambda}}{k!}\chi^{\lambda}(\sigma)\left[v_{\sigma(1)}\otimes\dots\otimes v_{\sigma(k)}\right]}%
\global\long\def\p{\mathscr{P}}%
\global\long\def\dkn{\left\langle e_{i_{1}}\otimes\dots\otimes e_{i_{k}}:\ i_{1},\dots,i_{k}\ \mathrm{pairwise\ distinct}\right\rangle }%
\global\long\def\normxilambda{\xi_{\lambda}^{\mathrm{norm}}}%
\global\long\def\skdelta{S_{k}^{\psi}}%
\global\long\def\vlambdacheck{\check{V}^{\lambda}}%
\global\long\def\Q{\mathcal{Q}}%
\global\long\def\endoofvlambda{\check{\xi}_{\lambda}^{\mathrm{norm}}\otimes\normxilambda}%
\global\long\def\cntwok{\left(\C^{n}\right)^{\otimes2k}}%
\global\long\def\vsigmas{v_{\sigma(1)}\otimes\dots\otimes v_{\sigma(k)}\otimes v_{\sigma'(1)}\otimes\dots\otimes v_{\sigma'(k)}}%
\global\long\def\D{\mathcal{D}}%
\global\long\def\tablambda{\mathrm{Tab}_{\mathcal{D},\mathrm{sign}}\left(\lambda^{+}(2k)\right)}%
\global\long\def\part{\mathrm{Part}\left([2k]\right)}%
\global\long\def\ei{e_{i_{1}}\otimes\dots\otimes e_{i_{k}}}%
\global\long\def\csk{\C\left[S_{k}\right]}%
\global\long\def\mixedtensor{\cnk\otimes\left(\left(\C^{n}\right)^{\vee}\right)^{\otimes l}}%
\global\long\def\shortmixedtensor{C_{k,l}^{n}}%
\global\long\def\signisotypic{V_{\mathcal{D},\mathrm{sign}}^{\lambda^{+}(2k)}}%
\global\long\def\GLn{\mathrm{GL}_{n}}%

\title{Projection formulas and a refinement of Schur--Weyl--Jones duality
for symmetric groups}
\author{Ewan Cassidy}
\maketitle
\begin{abstract}
Schur--Weyl--Jones duality establishes the connection between the
commuting actions of the symmetric group $S_{n}$ and the partition
algebra $\P$ on the tensor space $\cnk.$ We use a refinement of
this considered first by Littlewood and later, by Sam and Snowden,
whereby there is a version of Schur--Weyl duality for the symmetric
groups $S_{n}$ and $S_{k}$ acting on a subspace of $\cnk$. We obtain
an explicit formula for the orthogonal projection from $\cnk$ to
each irreducible subrepresentation, yielding a new combinatorial approach
to computing stable irreducible characters of the symmetric group. 
\end{abstract}
\tableofcontents{}

\section{Introduction\label{sec:INTRODUCTION}}

Classic Schur--Weyl duality due to Schur \cite{Schur1,Schur2} asserts
that, in $\mathrm{End}\left(\cnk\right),$ $\GLn\left(\C\right)$
and $S_{k}$ generate full mutual centralizers of one another, when
$\GLn$ acts diagonally and $S_{k}$ permutes tensor coordinates.
This gives a decomposition 
\begin{equation}
\cnk\cong\bigoplus_{\lambda\vdash k,\ l(\lambda)\leq n}V^{\lambda}\otimes S^{\lambda},\label{eq: original schur weyl duality}
\end{equation}
where each $V^{\lambda}$ is the irreducible representation of $S_{k}$
corresponding to $\lambda\vdash k$ and $S^{\lambda}$ is the irreducible
representation of $GL_{n}$ corresponding to $\lambda.$ This relates
to a formula obtained earlier by Frobenius \cite{Frobenius}, giving
the Schur polynomial expansion of the power sum symmetric polynomials.
Indeed, if $g\in\mathrm{GL}_{n}$ has eigenvalues $x_{1},\dots,x_{n}$
and $\sigma\in S_{k}$ has cycle type $\mu,$ then the bitrace of
$(g,\sigma)$ on $\cnk$ is given by 
\begin{equation}
\mathrm{btr}_{\cnk}\left(g,\sigma\right)=p_{\mu}\left(x_{1},\dots,x_{n}\right),\label{eq: bitrace general linear case}
\end{equation}
where $p_{\mu}$ is a power sum symmetric polynomial, 
\[
p_{\mu}(x_{1},\dots,x_{n})=\prod_{i=1}^{l(\mu)}\left(\sum_{j=1}^{n}x_{j}^{\mu_{i}}\right).
\]
Combining (\ref{eq: bitrace general linear case}) with (\ref{eq: original schur weyl duality})
yields the expansion 
\[
p_{\mu}=\sum_{\lambda\vdash k,\ l(\lambda)\leq n}\chi^{\lambda}\left(\mu\right)s_{\lambda},
\]
 where $\chi^{\lambda}$ is the character of $V^{\lambda}$ and $s_{\lambda}$
is the Schur polynomial corresponding to $\lambda.$ 

The analogous results in the case of $S_{n}$ acting diagonally on
$\cnk$ were obtained by Jones \cite{Jones} and can be expressed
using the partition algebra, $\P.$ When $n\geq2k,$ $S_{n}$ and
$\P$ generate full mutual centralizers of one another in $\mathrm{End}\left(\cnk\right),$
leading to the decomposition 
\[
\cnk\cong\bigoplus_{\lambda\vdash l,\ 0\leq l\leq k}V^{\lambda^{+}(n)}\otimes R^{\lambda},
\]
where $R^{\lambda}$ is the irreducible $\P$ representation corresponding
to $\lambda$ and $V^{\lambda^{+}(n)}$ is the irreducible $S_{n}$
representation corresponding to 
\[
\lambda^{+}(n)=(n-|\lambda|,\lambda)\vdash n,
\]
see \S\ref{subsec:SIMPLE MODULES FOR THE PARTITION ALGEBRA}.

We use a refinement of this existing `Schur--Weyl--Jones duality'
due to Sam and Snowden \cite{SamSnowden} (considered also by Littlewood
\cite{Littlewood} in a slightly different context), constructing
a subspace
\[
\A\subseteq\cnk
\]
on which the inherited action of $\P$ descends to an action of $\csk$
(via the natural restriction map $R:\P\to\csk,$ see (\ref{eq: restricting Pkn to CS_k}))
and whereby $\sigma\in S_{k}$ permutes tensor coordinates. $\A$
also inherits an action of $S_{n}$ from $\cnk$ and we denote the
associated representations by $\theta$ and $\rho$ respectively.
These actions commute, making $\A$ a $\C\left[S_{n}\times S_{k}\right]$--representation,
with decomposition
\begin{equation}
\A\cong\bigoplus_{\lambda\vdash k}V^{\lambda^{+}(n)}\otimes V^{\lambda}\label{eq: Ak(n) decomp introduction}
\end{equation}
 for $n\geq2k$. Our main result below is an explicit formula for
the orthogonal projection $\Q_{\lambda,n},$ from $\cnk$ to each
irreducible block $\mathcal{U}_{\lambda^{+}(n)}\cong V^{\lambda^{+}(n)}\otimes V^{\lambda},$
for each $\lambda\vdash k.$ 
\begin{thm}
\label{thm: MAIN THEOREM}For any $k\in\mathbb{Z}_{>0},$ for any
$\lambda\vdash k$ and for any $n\geq2k,$ there exists a $S_{n}\times S_{k}$--subrepresentation
\[
\mathcal{U}_{\lambda^{+}(n)}\subseteq\A
\]
 such that 
\begin{enumerate}
\item[a)] $\mathcal{U}_{\lambda^{+}(n)}$ is irreducible and satisfies 
\[
\mathcal{U}_{\lambda^{+}(n)}\cong V^{\lambda^{+}(n)}\otimes V^{\lambda},
\]
\end{enumerate}
and
\begin{enumerate}
\item[b)]  the orthogonal projection 
\[
\Q_{\lambda,n}:\cnk\to\mathcal{U}_{\lambda^{+}(n)}
\]
is given by
\begin{equation}
d_{\lambda^{+}(n)}(-1)^{k}\sum_{\tau\in S_{k}}\chi^{\lambda}(\tau)\sum_{\pi\leq\iota(\tau)}\frac{(-1)^{|\pi|}}{(n)_{|\pi|}}P_{\pi}^{\mathrm{strict}},\label{eq: strict orthogonal projection formula}
\end{equation}
where 
\[
\left\langle P_{\pi}^{\mathrm{strict}}\left(e_{i_{1}}\otimes\dots\otimes e_{i_{k}}\right),e_{i_{k+1}}\otimes\dots\otimes e_{i_{2k}}\right\rangle =\begin{cases}
1 & \mathrm{if}\ j\sim k\ \mathrm{in}\ \pi\iff i_{j}=i_{k}\\
0 & \mathrm{otherwise.}
\end{cases}
\]
\end{enumerate}
\end{thm}

Theorem \ref{thm: MAIN THEOREM} has the following corollary, detailing
how one can compute the irreducible character $\chi^{\lambda^{+}(n)}(g)$
by instead computing the trace in $\cnk$ of $g\circ\mathcal{Q}_{\lambda,n}.$
\begin{cor}
\label{cor: Corollary of main theorem}For any $g\in\C\left[S_{n}\right],\ \sigma\in\csk$, 

\begin{equation}
\mathrm{btr}_{\mathcal{U}_{\lambda^{+}(n)}}\left(g,\sigma\right)=\chi^{\lambda^{+}(n)}(g)\chi^{\lambda}(\sigma).\label{eq: bitrace equality on single block}
\end{equation}
In particular, taking $\sigma=\mathrm{Id},$ we have 
\[
\begin{aligned}d_{\lambda}\chi^{\lambda^{+}(n)}(g) & =\mathrm{btr}_{\mathcal{U}_{\lambda,n}}\left(g,\mathrm{Id}\right)\\
 & =\mathrm{tr}_{\mathcal{U}_{\lambda,n}}(g)\\
 & =\mathrm{btr}_{\cnk}\left(g,\mathcal{Q}_{\lambda,n}\right).
\end{aligned}
\]
\end{cor}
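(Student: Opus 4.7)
The plan is that this corollary is essentially a formal consequence of Theorem \ref{thm: MAIN THEOREM}(a)--(b) together with a general fact about bitraces on external tensor product representations, so the proof will be short and bookkeeping-heavy rather than requiring a new idea.

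First I would establish (\ref{eq: bitrace equality on single block}) by invoking part (a) of Theorem \ref{thm: MAIN THEOREM}: since $\mathcal{U}_{\lambda,n} \cong V^{\lambda^+(n)} \otimes V^\lambda$ as an $S_n \times S_k$-module, the element $(g,\sigma) \in \C[S_n] \otimes \csk$ acts on $\mathcal{U}_{\lambda,n}$ as the tensor product operator $g|_{V^{\lambda^+(n)}} \otimes \sigma|_{V^\lambda}$. The trace of a tensor product of operators factors as the product of the two traces, giving
\[
\mathrm{btr}_{\mathcal{U}_{\lambda,n}}(g,\sigma) = \mathrm{tr}_{V^{\lambda^+(n)}}(g) \cdot \mathrm{tr}_{V^\lambda}(\sigma) = \chi^{\lambda^+(n)}(g)\chi^{\lambda}(\sigma),
\]
which is (\ref{eq: bitrace equality on single block}).

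Next, specializing to $\sigma = \mathrm{Id}$ is straightforward: $\chi^{\lambda}(\mathrm{Id}) = d_\lambda$, so the right-hand side of (\ref{eq: bitrace equality on single block}) collapses to $d_\lambda \chi^{\lambda^+(n)}(g)$. On the left, $\mathrm{btr}_{\mathcal{U}_{\lambda,n}}(g,\mathrm{Id})$ is just the ordinary trace $\mathrm{tr}_{\mathcal{U}_{\lambda,n}}(g)$ of the $S_n$-action.

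The only step that is not pure tensor bookkeeping is the final equality $\mathrm{tr}_{\mathcal{U}_{\lambda,n}}(g) = \mathrm{btr}_{\cnk}(g,\Q_{\lambda,n})$, and here I would use part (b) of Theorem \ref{thm: MAIN THEOREM}: since $\Q_{\lambda,n}$ is the projection from $\cnk$ onto $\mathcal{U}_{\lambda,n}$, and $g \in \C[S_n]$ preserves the subrepresentation $\mathcal{U}_{\lambda,n}$, the composition $g \circ \Q_{\lambda,n}$ annihilates a complement of $\mathcal{U}_{\lambda,n}$ and acts as $g|_{\mathcal{U}_{\lambda,n}}$ on $\mathcal{U}_{\lambda,n}$. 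Taking the trace over $\cnk$ therefore recovers the trace of $g$ on $\mathcal{U}_{\lambda,n}$, completing the displayed chain of equalities.

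There is no real obstacle here; the potential pitfalls are only notational — making sure the bitrace is interpreted as the trace of the product operator $g\cdot\Q_{\lambda,n}$ on $\cnk$, and being careful that $g$ and $\Q_{\lambda,n}$ commute (which they do because $\mathcal{U}_{\lambda,n}$ is $S_n$-stable and $\Q_{\lambda,n}$ is the orthogonal $S_n$-equivariant projection onto it).
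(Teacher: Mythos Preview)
Your proposal is correct and matches the paper's approach: the paper does not give a separate proof of this corollary, presenting it instead as an immediate consequence of Theorem~\ref{thm: MAIN THEOREM} together with the multiplicativity of the bitrace on an external tensor product, which is exactly the argument you spell out.
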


An intended application of this corollary is given in \S\ref{SUBsubsec: AN APPLICATION}.
In \S\ref{subsec:DIMENSION OF Ak(n)}, we obtain a recursive formula
for the dimension of the space $\A$, namely, 
\[
\dim A_{k+1}(n)=n\dim A_{k}(n-1)-\dim A_{k}(n).
\]
Whilst this is certainly not our main result, it may be of independent
interest so we choose to highlight it here. 

\subsection{Overview of paper}

Given the decomposition 
\[
\A\cong\bigoplus_{\lambda\vdash k}V^{\lambda^{+}(n)}\otimes V^{\lambda},
\]
we use the central idempotent $\p_{V^{\lambda}}\in\csk$ (see (\ref{arbitraryprojectioneqn}))
to project 
\[
\xi\overset{\mathrm{def}}{=}(e_{1}-e_{2})\otimes\dots\otimes(e_{2k-1}-e_{2k})
\]
to the irreducible block $V^{\lambda^{+}(n)}\otimes V^{\lambda}.$
Writing 
\[
\normxilambda\overset{\mathrm{def}}{=}\frac{\theta\left(\p_{V^{\lambda}}\right)\left(\xi\right)}{\left\Vert \theta\left(\p_{V^{\lambda}}\right)\left(\xi\right)\right\Vert },
\]
in \S\ref{subsec:DIMENSION OF Ak(n)}, we show that $\theta\left(\p_{V^{\lambda}}\right)\left(\xi\right)$
is non--zero for any $\lambda\vdash k$, so that the definition of
$\normxilambda$ makes sense and implying that the $S_{n}\times S_{k}$
representation generated by $\normxilambda,$ denoted by $\mathcal{U}_{\lambda^{+}(n)},$
is isomorphic to $V^{\lambda^{+}(n)}\otimes V^{\lambda}$ itself. 

\subsubsection*{What did not work?}

As is the case in the analogous setting of $U(n)$ and $S_{k}\times S_{l}$
acting on the mixed tensor space $\cnk\otimes\left(\left(\C^{n}\right)^{\vee}\right)^{\otimes l}$
(see the construction given by Koike \cite{Koike1989} and further
detailed in \cite[Section 2.2]{Magee2021}) one may expect to be able
to show that 
\begin{equation}
\mathcal{V}_{\lambda^{+}(n)}\overset{\mathrm{def}}{=}\left\langle \rho(g)\left(\normxilambda\right):\ g\in S_{n}\right\rangle \cong V^{\lambda^{+}(n)}.\label{eq: Sn rep generated by xi WRONG}
\end{equation}
Were this to be the case, one could consider $\endoofvlambda\in\left(\mathcal{V}_{\lambda^{+}(n)}\right)^{\vee}\otimes\mathcal{V}_{\lambda^{+}(n)}\cong\mathrm{End}\left(\mathcal{V}_{\lambda^{+}(n)}\right)$
and project this to $\mathrm{End}_{S_{n}}\left(\mathcal{V}_{\lambda^{+}(n)}\right).$
This projection is exactly 
\begin{equation}
\int_{S_{n}}\rho(g)\left(\normxilambda\right)^{\vee}\otimes\rho(g)\left(\normxilambda\right)dg\label{eq: projecting endo to Sn invars}
\end{equation}
and, by Schur's lemma, we observe that this must be some multiple
of the identity map on $\mathcal{V}_{\lambda^{+}(n)}.$ Extending
this by $0$ on the orthocomplement of $\mathcal{V}_{\lambda^{+}(n)}$
in $\cnk$ then yields a scalar multiple of the orthogonal projection
from $\cnk\to\mathcal{V}_{\lambda^{+}(n)}.$ 

One could then compute (\ref{eq: projecting endo to Sn invars}) explicitly
using the Weingarten calculus for the symmetric group, yielding an
explicit formula for this projection.The main obstacle to this approach
is that, in general, $\mathcal{V}_{\lambda^{+}(n)}$ is not irreducible,
with multiplicity dependent on $\lambda$ and expressed in terms of
Kostka numbers and Littlewood--Richardson coefficients, which are
not easily computed. 

\subsubsection*{The work around}

To work around this difficulty, we use two key properties of $\xi$:
\begin{itemize}
\item $\xi$ belongs to the sign--isotypic subspace of $\A$ for the action
of $\D\cong S_{2}\times\dots\times S_{2}$ (see \S\ref{SUBsubsec: PROPERTIES OF XI_=00005Clambda}
for the details) and,
\item For any $\sigma\in S_{k},$ $\theta\left(\sigma\right)\rho\left(\psi(\sigma)\right)\left(\normxilambda\right)=\normxilambda$
(i.e. that $\psi(\sigma)\xi=\xi\sigma)$. Here, $\psi:S_{k}\to S_{2k}$
is as defined in \S\ref{subsec:HYPEROCTAHEDRAL GROUP}, we note that
this is \emph{not} the obvious inclusion obtained by adding $k$ fixed
points.
\end{itemize}
In \S\ref{SUBsubsec: RESTRICTING FROM Sn TO S2k} and \S\ref{SUBsubsec: PROPERTIES OF XI_=00005Clambda},
we use the first observation to show that, in the restriction $\mathcal{U}_{\lambda^{+}(n)}\downarrow_{S_{2k}\times S_{k}},$
the irreducible block $V^{\lambda^{+}(2k)}\otimes V^{\lambda}$ has
multiplicity $1$ and that $\normxilambda$ is contained in this block.
This implies that the $S_{2k}\times S_{k}$ representation generated
by $\normxilambda$, denoted $\mathcal{U}_{\lambda^{+}(2k)},$ is
exactly isomorphic to $V^{\lambda^{+}(2k)}\otimes V^{\lambda}.$ 

We use the first observation again to show that $\normxilambda$ is
contained in the $\left(W^{\emptyset,\lambda}\otimes V^{\lambda}\right)$--isotypic
subspace in the restriction $\mathcal{U}_{\lambda^{+}(2k)}\downarrow_{H_{k}\times S_{k}}.$
One of the main technical challenges in this paper is to show that
this isotypic subspace always has multiplicity $1.$ This is done
in \S\ref{subsec:CONSTRUCTING W^=00005Cphi,=00005Clambda =00005Cotimes V^=00005Clambda INSIDE CNK}
using the Gelfand--Tsetlin basis of $V^{\lambda^{+}(2k)}\otimes V^{\lambda}.$ 

Once we have this, it is relatively straightforward to show that the
$\psi\left(S_{k}\right)\times S_{k}$ representation generated by
$\normxilambda$ is isomorphic to $V^{\lambda}\otimes V^{\lambda}$
and we use the second observation to show that (\ref{eq: projecting endo to Sn invars})
is a multiple of the orthogonal projection $\mathcal{Q}_{\lambda,n}:\cnk\to\mathcal{U}_{\lambda^{+}(n)}$.
This is done in \S\ref{subsec:IDENTIFYING THE PROJECTION}.

To complete the proof of Theorem \ref{thm: MAIN THEOREM}, it remains
to evaluate (\ref{eq: projecting endo to Sn invars}) explicitly using
the Weingarten calculus and we do this in \S\ref{subsec:CALCULATING THE PROJECTION}. 

\subsection{Intended applications and further questions\label{subsec:INTENDED APPLICATIONS AND FURTHER QUESTIONS}}

\subsubsection{What if $n<2k?$\label{SUBsubsec:WHAT IF n<2k?}}

An assumption for much of this paper is that $n\geq2k.$ In this case,
the map $\pi\overset{\hat{\theta}}{\mapsto}P_{\pi}^{\mathrm{weak}}$
is injective on $\P$ (see \S\ref{subsec:DUALITY BETWEEN Pk(n) AND Sn}),
but for $n<2k$, this is not true. Indeed, in \cite[Theorem 2.6]{ComesOstrik},
the kernel of this map is determined to be spanned by the elements
\[
\{x_{\pi}\in\P:\ |\pi|>n\}
\]
where 
\[
x_{\pi}\overset{\mathrm{def}}{=}\pi-\sum_{\mu>\pi}x_{\mu}
\]
and $\leq$ is the natural partial ordering on set partitions (as
in \S\ref{subsec:THE PARTITION ALGEBRA})\footnote{We thank Brendon Rhoades for directing us to this result.}.
Note that this agrees with the injectivity of $\P\to\mathrm{End}_{S_{n}}\left(\cnk\right)$
in the case of $n\geq2k.$ Obtaining a similar expression for the
kernel of the map $\csk\to\mathrm{End}_{S_{n}}\left(\A\right)$ could
yield a decomposition of $\A$ into irreducible blocks in the case
$n<2k$, which would be worth pursuing in future for completeness
(for $n\geq2k,$ this map is injective and can be to decompose $\A$
as in (\ref{eq: Ak(n) decomp introduction})). 

\subsubsection{An application\label{SUBsubsec: AN APPLICATION}}

An intended application for this paper is in the computation of the
expected characters\emph{ }of $w$--random permutations. This question
has been addressed in e.g. \cite{Puder2011,HananyPuder,PuderParzanchewski2012},
as well as \cite{MageePuder1,MageePuder2} where the same question
is considered for other compact groups. Given a word $w\in F_{m}=\left\langle x_{1},\dots,x_{m}\right\rangle $
and a compact group $G$, one obtains a word map 
\[
w:\underbrace{G\times\dots\times G}_{m}\to G,
\]
 whereby $w\left(g_{1},\dots,g_{m}\right)$ is the element of $G$
obtained by replacing each $x_{i}$ in $w$ with $g_{i}$. Our projection
formula is intended to be applied in the case of $G=S_{n}.$

Determining the distribution on $S_{n}$ induced by each word map
$w$ is, in most cases, non--trivial and a natural starting point
is to consider, for each $w\in F_{m},$ 

\[
\mathop{\mathbb{E}}_{\sigma_{1},\dots,\sigma_{m}\in S_{n}}\left[\#\mathrm{fixed\ points\ of\ }\left(w\left(\sigma_{1},\dots,\sigma_{m}\right)\right)\right],
\]
where $\sigma_{1},\dots,\sigma_{m}\in S_{n}$ are i.i.d. uniformly
random. Puder and Parzanchevski \cite{PuderParzanchewski2012} give
sharp asymptotic bounds for the expected number of fixed points in
terms of the \emph{primitivity rank }of $w$, an algebraic invariant
of $w$ introduced by Puder in \cite{Puder2011}. A word $w\in F_{m}$
is said to be primitive in a free group if it belongs to some basis
of that group and the primitivity rank $\pi(w)$ of a word in $F_{m}$
is defined by
\[
\pi(w)\overset{\mathrm{def}}{=}\mathrm{min}\Big\{\mathrm{rk}H:\ H\leq F_{m},\ w\in H,\ w\ \mathrm{not\ primitive\ in}\ H\Big\}.
\]
If no such subgroup exists, then we set $\pi(w)=\infty.$ 
\begin{thm}[{\cite[Theorem 1.8]{PuderParzanchewski2012}}]
 For any $w\in F_{m},$ 
\[
\mathop{\mathbb{E}}_{\sigma_{1},\dots,\sigma_{m}\in S_{n}}\left[\#\mathrm{fixed\ points\ of\ }\left(w\left(\sigma_{1},\dots,\sigma_{m}\right)\right)\right]=1+\frac{|\mathrm{Crit}(w)|}{n^{\pi(w)-1}}+O\left(\frac{1}{n^{\pi(w)}}\right).
\]
 
\end{thm}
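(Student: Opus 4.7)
The plan is to translate the expected fixed-point count into a sum over subgroups of $F_{m}$ indexed by the associated random action. Let $\phi : F_{m} \to S_{n}$ be the uniformly random homomorphism defined by $\phi(x_{i}) = \sigma_{i}$. Orbit--stabilizer gives $\#\mathrm{fix}(\phi(w)) = \#\{i \in [n] : w \in \mathrm{Stab}_{\phi}(i)\}$, so I would decompose
\[
\mathop{\mathbb{E}}\bigl[\#\mathrm{fix}(\phi(w))\bigr] \;=\; \sum_{[H] :\, w \in H \leq F_{m}} \mathop{\mathbb{E}}\Bigl[\#\bigl\{i \in [n] : \mathrm{Stab}_{\phi}(i) = H\bigr\}\Bigr],
\]
the outer sum being taken over conjugacy representatives $[H]$ of finitely generated subgroups. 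Each expectation on the right can be reinterpreted via the Stallings core graph $\Gamma_{H}$ of $H$ as a count of based immersions of a finite $m$-edge-labeled directed graph into the Schreier graph of $\phi$ on $[n]$.

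The next step is the combinatorial estimate. If $\Gamma_{H}$ has $v$ vertices and $v + \mathrm{rk}(H) - 1$ edges, then the expected number of based embeddings of $\Gamma_{H}$ into the random Schreier graph is
\[
\frac{(n-1)_{v-1}}{n^{\,v + \mathrm{rk}(H) - 1}} \;=\; \Theta\bigl(n^{1 - \mathrm{rk}(H)}\bigr),
\]
since there are $(n-1)_{v-1}$ ways to assign the non-basepoint vertices to distinct elements of $[n] \setminus \{1\}$ and each of the $v + \mathrm{rk}(H) - 1$ labeled edges is realized, by the correct value of some $\sigma_{i}$, with probability $1/n$ and (essentially) independently.

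I then pass from ``$H \leq \mathrm{Stab}_{\phi}(i)$'' to ``$H = \mathrm{Stab}_{\phi}(i)$'' via M\"obius inversion on the lattice of subgroups of $F_{m}$ containing $\langle w \rangle$. The crucial algebraic input, due to Puder, is that the M\"obius contribution from any subgroup $H$ in which $w$ is \emph{primitive} vanishes: geometrically, such a $\Gamma_{H}$ admits a non-trivial free extension along which the contributions cancel. What survives is a sum indexed only by the \emph{algebraic} extensions of $\langle w \rangle$. The unique rank-$1$ algebraic extension $H = \langle w \rangle$ produces the leading constant $1$. The next surviving terms are those of rank exactly $\pi(w)$, and the corresponding subgroups are by definition the elements of $\mathrm{Crit}(w)$; each contributes $n^{1-\pi(w)}$ to leading order, and the higher-rank contributions combine into the $\mathcal{O}(n^{-\pi(w)})$ remainder.

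The principal obstacle is the Puder vanishing lemma, namely that every $H$ in which $w$ is primitive contributes $0$ after the M\"obius inversion. This rests on a structural characterization of primitive elements of free groups via Whitehead-type moves on their core graphs and constitutes the main combinatorial content of the argument; the remaining steps, including the passage from ``embeddings'' to ``immersions'' and the uniform control of error terms in the rank parameter, are by comparison systematic bookkeeping.
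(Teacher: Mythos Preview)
The paper does not prove this theorem at all; it is quoted verbatim from \cite[Theorem 1.8]{PuderParzanchewski2012} as motivational background in Section~\ref{SUBsubsec: AN APPLICATION}, with no argument given. There is therefore no ``paper's own proof'' against which to compare your proposal.

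That said, your sketch is a recognisable outline of the actual Puder--Parzanchevski argument: decomposing by the stabilizer subgroup, reading off the contribution of each $H$ via its Stallings core graph, and then invoking the structural fact that only \emph{algebraic} extensions of $\langle w\rangle$ survive. A few of your intermediate statements are imprecise as written. The edge events in your embedding count are not independent, so the formula $(n-1)_{v-1}/n^{v+\mathrm{rk}(H)-1}$ is not literally correct; the right computation uses that a random homomorphism $F_m\to S_n$ corresponds to $m$ independent uniform permutations, and one counts injective labelings of $\Gamma_H$ compatible with these permutations, giving a ratio of falling factorials whose leading term is indeed $n^{1-\mathrm{rk}(H)}$. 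More substantively, the ``vanishing'' you attribute to M\"obius inversion is not a cancellation over a lattice of subgroups but rather the statement that if $w$ is primitive in $H$ then the expected number of $i$ with $\mathrm{Stab}_\phi(i)=H$ already agrees with its value for a uniformly random permutation, so such $H$ contribute nothing beyond the trivial term; this is the content of Puder's primitivity criterion and is the genuine crux. Finally, one needs a uniform bound showing that the infinitely many algebraic extensions of rank $>\pi(w)$ together contribute $O(n^{-\pi(w)})$; this is nontrivial and is handled in the cited paper by bounding the number of algebraic extensions of each rank. Your last paragraph acknowledges this is where the work lies, which is accurate.
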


In \cite{HananyPuder}, Hanany and Puder generalize this to all stable
irreducible characters of $S_{n}$. These are the family of irreducible
representations of $S_{n}$ corresponding to Young diagrams $\lambda^{+}(n)$
where $\lambda\vdash k$ is fixed. The collection of the characters
of such representations form a linear basis of $\mathbb{Q}\left[\eta_{1},\eta_{2},\dots\right],$
where $\eta_{i}(\sigma)=\#\mathrm{fixed\ points\ of}\ \sigma^{i},$
and so these are a very natural family of irreducible representations
to consider, see the discussion in \cite[Appendix B]{HananyPuder}. 
\begin{thm}[{\cite[Theorem 1.3]{HananyPuder}}]
 For any $k\in\mathbb{Z}_{\geq2}$, for any $\lambda\vdash k$ and
for any $w\in F_{m}$ that is not a proper power or the identity,
\[
\mathop{\mathbb{E}}_{\sigma_{1},\dots,\sigma_{m}\in S_{n}}\left[\chi^{\lambda^{+}(n)}\left(w\left(\sigma_{1},\dots,\sigma_{m}\right)\right)\right]=O\left(\frac{1}{n^{\pi(w)}}\right).
\]
\end{thm}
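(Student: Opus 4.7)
The plan is to derive the bound from Corollary \ref{cor: Corollary of main theorem} together with a Puder--Parzanchevski-style asymptotic analysis of the explicit trace formula produced by Theorem \ref{thm: MAIN THEOREM}. First, applying Corollary \ref{cor: Corollary of main theorem} with $g=w(\sigma_{1},\dots,\sigma_{m})$ and taking expectations, one gets
\[
d_{\lambda}\mathop{\mathbb{E}}\bigl[\chi^{\lambda^{+}(n)}(w(\sigma_{1},\dots,\sigma_{m}))\bigr]=\mathop{\mathbb{E}}\bigl[\mathrm{tr}_{\cnk}\bigl(w(\sigma_{1},\dots,\sigma_{m})\circ\Q_{\lambda,n}\bigr)\bigr].
\]
Substituting the explicit expression for $\Q_{\lambda,n}$ from (\ref{eq: MAIN THEOREM EQUATION}) and using linearity, the right-hand side becomes a finite sum indexed by $\pi_{1}\in\part$ of expected traces $\mathop{\mathbb{E}}[\mathrm{tr}_{\cnk}(w(\sigma)\circ p_{\pi_{1}})]$, weighted by the coefficients involving $\chi^{\lambda}(\sigma)\chi^{\lambda}(\sigma')$, $\mu(\pi,\pi_{1})$ and $(n)_{|\pi|}^{-1}$ that appear in (\ref{eq: MAIN THEOREM EQUATION}).

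The next step is to recognise each expected trace as a $w$-fixed-point count in the sense of \cite{PuderParzanchewski2012}. Unpacking the action of $p_{\pi_{1}}$ on a basis vector of $\cnk$, the trace $\mathrm{tr}_{\cnk}(g\circ p_{\pi_{1}})$ equals the number of pairs of tuples $((i_{a}),(j_{a}))\in[n]^{k}\times[n]^{k}$ whose concatenation is $\pi_{1}$-compatible and which satisfy $g(j_{a})=i_{a}$ for every $a$. For $g=w(\sigma_{1},\dots,\sigma_{m})$, taking expectations then produces exactly the types of expected counts of partial $w$-invariant configurations whose large-$n$ asymptotics are treated by Hanany--Puder: for a non-power $w$, each such expectation admits an expansion as a polynomial in $n$ of a prescribed degree plus lower-order corrections whose order is controlled by the primitivity rank $\pi(w)$.

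The main obstacle is then the cancellation step. After plugging the asymptotic expansions of the individual expected traces into the sum produced by Theorem \ref{thm: MAIN THEOREM}, one must show that all contributions of order $n^{-j}$ with $0\leq j<\pi(w)$ vanish, leaving only $\mathcal{O}(n^{-\pi(w)})$. These cancellations are forced by representation theory: $\Q_{\lambda,n}$ projects $\cnk$ onto a subspace whose $S_{n}$-isotypic type is purely $V^{\lambda^{+}(n)}$, so orthogonality of non-trivial irreducible $S_{n}$-characters against the trivial character rules out any polynomial-in-$n$ contribution to $\mathop{\mathbb{E}}[\chi^{\lambda^{+}(n)}(w)]$ coming from the ``generic'' count. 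Isolating the exact combinatorial mechanism that realises this cancellation, presumably by telescoping the Möbius weights $\mu(\pi,\pi_{1})$ against the falling factorials $(n)_{|\pi|}$ and then exploiting Schur orthogonality on the sum $\chi^{\lambda}(\sigma)\chi^{\lambda}(\sigma')$, is where I expect the heart of the work to lie. It is precisely because $\Q_{\lambda,n}$ is built from a \emph{group-algebra} central idempotent, as highlighted in the motivation, that the requisite orthogonality identities are available.
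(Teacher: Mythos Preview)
The paper does not prove this theorem. It is quoted in Section~\ref{SUBsubsec: AN APPLICATION} as a corollary of the main result of Hanany and Puder \cite{HananyPuder}, and the paper's own contribution in that subsection is only to observe that Corollary~\ref{cor: Corollary of main theorem} gives a framework in which one might hope to \emph{study} such expectations. No proof is given or attempted here; the statement is imported wholesale from \cite{HananyPuder}.

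Your proposal is therefore not a comparison against the paper's proof but an outline of an alternative route via the projection formula. As an outline it is reasonable and indeed matches the ``intended application'' the paper itself advertises. However, it is not a proof: the decisive step, namely that all terms of order $n^{-j}$ with $0\le j<\pi(w)$ cancel, is asserted but not established. Your justification that ``orthogonality of non-trivial irreducible $S_{n}$-characters against the trivial character rules out any polynomial-in-$n$ contribution'' is circular: that orthogonality only tells you $\mathbb{E}[\chi^{\lambda^{+}(n)}(w)]$ has no constant term, not that the next $\pi(w)-1$ orders vanish. The genuine content of the Hanany--Puder theorem is precisely that the algebraic/topological invariant $\pi(w)$ governs the depth of cancellation, and nothing in the projection formula~(\ref{eq: MAIN THEOREM EQUATION}) by itself knows about primitivity rank. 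To turn your sketch into a proof you would need to import the full Hanany--Puder machinery (or something equivalent) to control each $\mathbb{E}[\mathrm{tr}_{\cnk}(w(\sigma)\circ p_{\pi_{1}})]$ to order $n^{-\pi(w)}$, at which point you have essentially reproduced their argument rather than bypassed it.
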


Moreover, they conjecture the much stronger bound:
\begin{conjecture}
For any $k\in\mathbb{Z}_{\geq2}$, for any $\lambda\vdash k$ and
for any $w\in F_{m}$
\[
\mathop{\mathbb{E}}_{\sigma_{1},\dots,\sigma_{m}\in S_{n}}\left[\chi^{\lambda^{+}(n)}\left(w\left(\sigma_{1},\dots,\sigma_{m}\right)\right)\right]=O\left(\frac{1}{\left(\dim V^{\lambda^{+}(n)}\right)^{\pi(w)-1}}\right).
\]
\end{conjecture}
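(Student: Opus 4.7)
The plan is to apply Corollary \ref{cor: Corollary of main theorem}, which expresses $d_{\lambda}\chi^{\lambda^{+}(n)}(g)$ as a trace on $\cnk$ against the explicit projection $\mathcal{Q}_{\lambda,n}$. With $g = w(\sigma_{1},\ldots,\sigma_{m})$ and $\rho$ the diagonal $S_{n}$-action on $\cnk$, the fact that $\mathcal{Q}_{\lambda,n}$ does not depend on the tuple $(\sigma_1,\dots,\sigma_m)$ lets the expectation pass inside the trace, yielding
\[
d_{\lambda}\mathop{\mathbb{E}}_{\sigma_{1},\dots,\sigma_{m}\in S_{n}}\!\left[\chi^{\lambda^{+}(n)}(w(\sigma_{1},\dots,\sigma_{m}))\right] = \mathrm{tr}_{\cnk}\!\left(\mathop{\mathbb{E}}[\rho(w(\sigma_{1},\dots,\sigma_{m}))]\circ\mathcal{Q}_{\lambda,n}\right).
\]
Substituting (\ref{eq: MAIN THEOREM EQUATION}) reduces the right-hand side to a finite linear combination, indexed by $\pi_{1}\in\part$, of quantities $\mathop{\mathbb{E}}\bigl[\mathrm{tr}(\rho(w(\sigma_1,\dots,\sigma_m))\cdot p_{\pi_{1}})\bigr]$ with coefficients rational in $n$.

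Next, I would interpret each such expected trace combinatorially. The operator $p_{\pi_{1}}$ imposes prescribed identifications among the indices of a basis tensor $e_{i_{1}}\otimes\cdots\otimes e_{i_{k}}$, so the trace counts tuples in $[n]^{k}$ compatible with $\pi_{1}$ that are permuted in a specified way by $w(\sigma_1,\dots,\sigma_m)$. Averaging over uniform tuples then expresses the answer as a sum of counts of homomorphisms $F_{m}\to S_{n}$ subject to equations controlled jointly by $w$ and $\pi_{1}$ --- precisely the objects whose asymptotics in $n$ are governed by the primitivity rank $\pi(w)$ via the core-graph machinery of Puder and Parzanchevski. The stratification of set partitions $\pi_{1}$ by the isomorphism class of the subgroup $H\leq F_{m}$ that they jointly encode with $w$ should organise the sum in a way that makes $\pi(w)$ visible.

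The main obstacle is showing that the coefficients $C_{\pi_{1}}(\lambda,n)$ appearing in (\ref{eq: MAIN THEOREM EQUATION}), which carry the Young symmetriser via the factor $\chi^{\lambda}(\sigma)\chi^{\lambda}(\sigma')$, induce enough cancellation between the various $\pi_{1}$ to improve the Hanany--Puder bound $\mathcal{O}(n^{-\pi(w)})$ all the way down to the conjectured $\mathcal{O}(n^{-k(\pi(w)-1)}) = \mathcal{O}((\dim V^{\lambda^{+}(n)})^{-(\pi(w)-1)})$. The most plausible route I can see is to recognise the resulting signed $\pi_{1}$-sum, after the stratification above, as a Littlewood--Richardson-type or determinantal evaluation of $\chi^{\lambda}$ that forces the lower-order powers of $n$ to vanish identically --- a symmetric-group analogue of the asymptotic Weingarten arguments used by Magee \cite[Section 3.3]{Magee2021} in the unitary setting. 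The explicit formula (\ref{eq: MAIN THEOREM EQUATION}) supplies the algebraic input, but isolating and proving the combinatorial identity that delivers the extra $n^{-(k-1)(\pi(w)-1)}$ of decay is, to my knowledge, out of reach of current techniques, which is why the conjecture remains open.
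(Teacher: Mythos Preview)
The statement you are addressing is a \emph{conjecture} in the paper, not a theorem; the paper does not prove it and explicitly presents it as open (attributed to Hanany and Puder). Your proposal is therefore not being compared against a proof in the paper, because there is none.

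That said, your outline is exactly the approach the paper advertises as its intended application in Section~\ref{SUBsubsec: AN APPLICATION}: use Corollary~\ref{cor: Corollary of main theorem} to rewrite the expected character as $\frac{1}{d_{\lambda}}\mathop{\mathbb{E}}\bigl[\mathrm{btr}_{\cnk}(w(\sigma_{1},\dots,\sigma_{m}),\mathcal{Q}_{\lambda})\bigr]$, expand $\mathcal{Q}_{\lambda}$ via (\ref{eq: MAIN THEOREM EQUATION}), and then attack the resulting fixed-point counts with the Puder--Parzanchevski core-graph machinery. You have correctly identified both the framework the paper sets up and the genuine gap: the cancellation among the $\pi_{1}$-terms needed to upgrade the Hanany--Puder bound $\mathcal{O}(n^{-\pi(w)})$ to $\mathcal{O}(n^{-k(\pi(w)-1)})$ is not established anywhere, and the paper makes no claim to have done so. Your honest acknowledgment that this step is out of reach is accurate and matches the paper's own stance.
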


Corollary \ref{cor: Corollary of main theorem} gives a combinatorial
method for computing the irreducible characters $\chi^{\lambda^{+}(n)}$
and one should be able to use this to give new asymptotic bounds for
\[
\mathop{\mathbb{E}}_{\sigma_{1},\dots,\sigma_{m}\in S_{n}}\left[\chi^{\lambda^{+}(n)}\left(w\left(\sigma_{1},\dots,\sigma_{m}\right)\right)\right],
\]
 using analytic means.
\begin{rem}
In subsequent work \cite{Cassidy2}, we have used Theorem \ref{thm: MAIN THEOREM}
to prove that, for any word $w\in F_{m}$ that is not a proper power,
\[
\mathop{\mathbb{E}}_{\sigma_{1},\dots,\sigma_{m}\in S_{n}}\left[\chi^{\lambda^{+}(n)}\left(w\left(\sigma_{1},\dots,\sigma_{m}\right)\right)\right]=O\left(\frac{1}{\left(\dim V^{\lambda^{+}(n)}\right)}\right),
\]
solving one aspect of the conjecture (and confirming the conjecture
for $w\in F_{m}$ with $\pi(w)=2$, thus also resolving the conjecture
for $F_{2}$), whilst the full conjecture remains open. 
\end{rem}

\subsection*{Acknowledgements}

We thank Michael Magee for his guidance and comments on this paper.
This paper is part of a project that received funding from the European
Research Council (ERC) under the European Union\textquoteright s Horizon
2020 research and innovation programme (grant agreement No 949143). 

\section{Background}

\subsection{Preliminaries and notation}

\subsubsection{Representation Theory\label{subsec:REPRESENTATION THEORY}}

For a thorough introduction, we direct the reader to the textbooks
by Fulton and Harris \cite{FultonHarris} or Serre \cite{Serre},
for example. Given a representation $(\rho,V)$ of a finite group
$G,$ we will denote by $\left(\rho^{*},V^{\vee}\right)$ its \emph{dual
representation}, where $V^{\vee}$ is the space of linear functionals
on $V$ and $\rho^{*}$ is defined by $\rho^{*}(g)\left(\check{v}\right)\left[\rho(g)(u)\right]=\check{v}(u)$.
Given a decomposition $V\cong\bigoplus V_{i}^{\oplus a_{i}}$ of an
arbitrary representation $V$ of a finite group $G,$ we define 
\begin{equation}
\p_{V_{i}}\overset{\mathrm{def}}{=}\frac{\mathrm{dim}(V_{i})}{|G|}\sum_{g\in G}\overline{\chi_{V_{i}}(g)}g\in\C[G],\label{arbitraryprojectioneqn}
\end{equation}
a central idempotent in the group algebra whose image $\rho\left(\p_{V_{i}}\right)\in\mathrm{End}(V)$
is the projection from $V$ on to the $V_{i}$--isotypic subspace.

We denote $[n]\overset{\mathrm{def}}{=}\{1,\dots,n\}$ and throughout
this paper, $e_{1},\dots,e_{n}\in\C^{n}$ will be the standard orthonormal
(with respect to the standard Hermitian inner product $\langle.,.\rangle$)
basis of $\C^{n}$ so that the set 
\[
\big\{ e_{i_{1}}\otimes\dots\otimes e_{i_{k}}:i_{j}\in[n]\ \mathrm{for}\ j\in[k]\big\}
\]
is the standard basis for $\cnk.$ Given a multi--index $I=\left(i_{1},\dots,i_{k}\right),$
we will write 
\[
e_{I}\overset{\mathrm{def}}{=}e_{i_{1}}\otimes\dots\otimes e_{i_{k}}\in\left(\C^{n}\right)^{\otimes k}
\]
and similarly $\check{e}_{I}$ denotes $\check{e}_{i_{1}}\otimes\dots\otimes\check{e}_{i_{k}}\in\left(\left(\C^{n}\right)^{\vee}\right)^{\otimes k}.$
Given a representation $\rho:G\to\mathrm{End}(V)$ and a subgroup
$H\leq G,$ we will write $\mathrm{Res}_{H}^{G}V$ or $V\downarrow_{H}$
for the restriction of $V$ to $H$ and, given a representation $U$
of $H,$ we will denote by $\mathrm{Ind}_{H}^{G}U$ the induced representation
of $G$. 

\subsubsection{Symmetric Group\label{subsec:SYMMETRIC GROUP}}

We denote the symmetric group on $n$ elements by $S_{n}=\{\mathrm{bijections}\ [n]\to[n]\}$
together with function composition. Where we refer to an inclusion
of the form $S_{m}\subseteq S_{n},$ for $m\leq n,$ unless specified
otherwise, we refer to the subgroup consisting of permutations in
$S_{n}$ which fix the elements $\{m+1,\dots,n\}$. In this case,
we may also refer to an inclusion $S'_{n-m}\subseteq S_{n},$ which
refers to the subgroup consisting of permutations that fix the elements
$\{1,\dots,m\}.$ It can be shown (see \cite{Humphreys} for example)
that $S_{n}$ is generated by transpositions of adjacent elements,
\emph{
\[
S_{n}\cong\left\langle (12),(23),\dots,(n-1\ n)\right\rangle .
\]
}These are the \emph{Coxeter generators, }denoted $s_{i}=(i\ i+1)$
for $i=1,\dots,n-1$. 

A \emph{Young Diagram }(YD) $\lambda$ is an arrangement of rows of
boxes, where the number of boxes in each row is non--increasing as
the row index increases. If $\lambda$ is a YD with $n$ boxes and
$l(\lambda)$ non--empty rows of boxes, then we write $\lambda\vdash n$
(or $|\lambda|=n)$ and we say that the length of $\lambda$ is $l(\lambda)$.
We can write this as $\lambda=\left(\lambda_{1},\dots,\lambda_{l(\lambda)}\right),$
with $\lambda_{1}\geq\dots\geq\lambda_{l(\lambda)}>0$ and $\lambda_{1}+\dots+\lambda_{l(\lambda)}=n.$
There is a notion of \emph{inclusion} for Young diagrams $\lambda$
and $\mu,$ where $|\mu|\leq|\lambda|.$ Informally, we say $\mu$
is contained inside $\lambda$ if we can obtain $\lambda$ by adding
boxes to $\mu$ (equivalently, removing boxes from $\lambda$ to obtain
$\mu$). More formally, if $\mu=\left(\mu_{1},\dots,\mu_{l(\mu)}\right)$
and $\lambda=\left(\lambda_{1},\dots,\lambda_{l(\lambda)}\right)$,
then $\mu$ is contained in $\lambda$ if $l(\mu)\leq l(\lambda)$
and for each $i\in[p],$ we have $\mu_{i}\leq\lambda_{i}.$ Where
$\mu$ is contained in $\lambda,$ we define the \emph{skew diagram
}$\lambda\backslash\mu$ to be the diagram consisting of the boxes
that are in $\lambda$, but not in $\mu.$ There is a natural bijection
between distinct isomorphism classes of irreducible representations
of $S_{n}$ and Young diagrams $\lambda\vdash n$, for example see
\cite[Chapter 4]{FultonHarris} or \cite{VershikOkounkov}. As such,
we label every irreducible representation of $S_{n}$ by its corresponding
YD $\lambda$, and denote this representation by $V^{\lambda}$. We
will denote the character of $V^{\lambda}$ by $\chi^{\lambda}$.
\begin{rem}
The characters of the symmetric group are integer valued (see \cite[p. 103]{Serre}
for example). Consequently, all representations $V$ of the symmetric
group are \emph{self dual, }meaning that $V\cong V^{\vee}$, since
\[
\chi_{V^{\vee}}(g)=\overline{\chi_{V}(g)}=\chi_{V}(g).
\]
\end{rem}

We will denote by $d_{\lambda}\overset{\mathrm{def}}{=}\chi^{\lambda}\left(\mathrm{Id}\right)=\dim V^{\lambda^{+}(n)}.$
A \emph{standard tableau }or \emph{Young tableau of shape $\mathcal{\lambda}\vdash n$
}is a labeling of the boxes of $\lambda$ with the integers $1,\dots,n,$
in which every integer appears exactly once and the numbers are increasing
both along the rows and down the columns. The set of standard tableau
of shape $\lambda$ is denoted by $\mathrm{Tab}(\lambda)$ and it
is a fact that $|\mathrm{Tab}(\lambda)|=d_{\lambda}.$ The content,
$\mathrm{cont}\left(\Box\right)$, of a box $\Box$ in a YD $\lambda$
is defined by
\[
\mathrm{cont}\left(\Box\right)=\mathrm{column\ index\ of\ \Box\ -\ row\ index\ of\ \Box.}
\]
Given $T\in\mathrm{Tab}(\lambda),$ we define a \emph{content vector
$(c_{1},\dots,c_{n})\in\mathbb{Z}^{n}$, }where $c_{i}$ is the content
of the box labeled $i$. Then we have the following proposition of
Vershik and Okounkov \cite[Proposition 6.2]{VershikOkounkov}.
\begin{prop}
\label{prop:vershik okounkov basis of V^=00005Clambda}There exists
an orthonormal basis $\{v_{T}\}_{T\in\mathrm{Tab}(\lambda)}$ of $V^{\lambda}$
in which the Coxeter generators act according to the following rules:
\begin{itemize}
\item If the boxes labeled $i$ and $i+1$ are in the same row of $T$,
then $s_{i}v_{T}=v_{T};$ 
\item If the boxes labeled $i$ and $i+1$ are in the same column of $T$,
then $s_{i}v_{T}=-v_{T}$;
\item If the boxes labeled $i$ and $i+1$ are in neither the same row or
the same column of $T$, then $s_{i}$ acts on the two dimensional
space spanned by $v_{T}$ and $v_{T^{\prime}}$ (the Young tableau
obtained from $T$ by swapping $i$ and $i+1$) by the following matrix
\[
\left(\begin{array}{cc}
r^{-1} & \sqrt{1-r^{-2}}\\
\sqrt{1-r^{-2}} & -r^{-1}
\end{array}\right),
\]
where $r=c_{i+1}-c_{i}$.
\end{itemize}
\end{prop}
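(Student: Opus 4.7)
The plan is to follow the Vershik--Okounkov approach via the Young--Jucys--Murphy (YJM) elements and the Gelfand--Tsetlin decomposition associated to the chain $S_1 \subset S_2 \subset \cdots \subset S_n$, treating the content vector as the spectral data that labels the basis.

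First I would introduce the YJM elements $X_1 = 0$ and $X_k = (1\,k)+(2\,k)+\cdots+((k{-}1)\,k) \in \mathbb{C}[S_k]$ for $k \geq 2$, and verify the two fundamental relations that drive everything: (i) the $X_k$ pairwise commute, so they generate a commutative subalgebra (the Gelfand--Tsetlin subalgebra) lying in the center of $\mathbb{C}[S_k]$ summed over $k$; and (ii) the Coxeter generator relations $s_i X_j = X_j s_i$ for $j \neq i, i{+}1$, together with $s_i X_i s_i = X_{i+1} - s_i$ and $X_i + X_{i+1}$ commuting with $s_i$. These relations are direct computations in the group algebra.

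Next I would use the fact that the branching $V^\lambda \!\downarrow_{S_{n-1}} = \bigoplus_{\mu} V^\mu$ is multiplicity-free (sum over $\mu$ obtained from $\lambda$ by removing one box), which follows inductively from a dimension count once one knows the irreducibles are indexed by Young diagrams. Multiplicity-freeness implies that iterating the restriction down the chain $S_n \supset S_{n-1} \supset \cdots \supset S_1$ produces a canonical (up to scalar) decomposition of $V^\lambda$ into one-dimensional joint eigenspaces of the $X_k$'s, with eigenspaces naturally indexed by chains $\emptyset \nearrow \lambda^{(1)} \nearrow \cdots \nearrow \lambda^{(n)} = \lambda$, i.e. by standard tableaux $T \in \mathrm{Tab}(\lambda)$. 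I would then show that the joint eigenvalue of $(X_1,\ldots,X_n)$ on $v_T$ is precisely the content vector $(c_1,\ldots,c_n)$ of $T$: the eigenvalue $c_k$ of $X_k$ equals $\mathrm{cont}(\Box)$ where $\Box$ is the box added at step $k$. This is the heart of the argument, and is the step I expect to be the main obstacle; it can be done by exhibiting that any admissible spectrum $(c_1,\ldots,c_n)$ for the YJM tuple must satisfy exactly the combinatorial rules defining a standard tableau (consecutive entries cannot coincide, differ by $\pm 1$ only along the obvious geometric configurations, etc.).

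Finally, to read off the action of $s_i$, I would restrict to the two-dimensional subspace invariant under $\langle s_i, X_i, X_{i+1}\rangle$ spanned by $v_T$ and (when it exists) $v_{T'}$, where $T'$ swaps $i, i{+}1$. If boxes $i$ and $i{+}1$ lie in the same row or column of $T$, then $T'$ is not a standard tableau; from $s_i X_i s_i = X_{i+1} - s_i$ combined with $c_{i+1}-c_i = \pm 1$, one deduces $s_i v_T = \pm v_T$ with the sign as stated. In the remaining case, $s_i$ acts on $\mathrm{span}(v_T, v_{T'})$ by a $2\times 2$ matrix whose entries are forced by three constraints: (a) $s_i^2 = 1$, (b) the relation $s_i X_i s_i = X_{i+1} - s_i$ combined with the known eigenvalues $c_i, c_{i+1}$ of $X_i, X_{i+1}$, which determines the diagonal entries to be $\pm r^{-1}$ with $r = c_{i+1}-c_i$, and (c) self-adjointness of $s_i$ with respect to the inner product (so the basis is orthonormal and off-diagonal entries are equal), forcing the off-diagonal entries to be $\sqrt{1-r^{-2}}$ up to sign. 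Fixing the phase of $v_{T'}$ relative to $v_T$ then produces exactly the displayed matrix.
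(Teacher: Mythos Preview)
The paper does not actually prove this proposition: it is quoted verbatim as \cite[Proposition 6.2]{VershikOkounkov} and used as a black box. Your outline is precisely the Vershik--Okounkov argument that the paper is citing, so in that sense your approach and the paper's ``approach'' coincide by construction. The sketch you give is correct in its architecture: the YJM relations you state are right (in particular $s_i X_i s_i = X_{i+1} - s_i$), multiplicity-freeness of the branching yields the Gelfand--Tsetlin decomposition indexed by standard tableaux, and the $2\times 2$ computation you describe does pin down the matrix up to the phase choice you mention. The one place to be careful when writing it out in full is the step you yourself flag as the main obstacle, namely identifying the YJM spectrum on $v_T$ with the content vector of $T$; this requires either an inductive argument on $n$ or the full ``admissible spectra'' analysis from \cite{VershikOkounkov}, and is not a one-line consequence of the other ingredients.
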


From now on, for each $\lambda$ we will fix such a basis of $V^{\lambda},$
and refer to this as the \emph{Gelfand--Tsetlin }basis. Additionally,
given some irreducible representation $V^{\lambda}\otimes V^{\mu}$
of $S_{|\lambda|}\times S_{|\mu|},$ the basis $\{v_{T_{1}}\otimes v_{T_{2}}\}_{T_{1}\in\mathrm{Tab}(\lambda),\ T_{2}\in\mathrm{Tab}(\mu)}$
formed by taking all tensor products of Gelfand--Tsetlin basis vectors
of $V^{\lambda}$ and $V^{\mu}$ will be referred to as the Gelfand--Tsetlin
basis of $V^{\lambda}\otimes V^{\mu}.$

\subsubsection{Hyperoctahedral Group\label{subsec:HYPEROCTAHEDRAL GROUP}}

We give an overview of the hyperoctahedral group, $H_{k},$ viewed
as a subgroup of $S_{2k}$, and its representation theory. This brief
introduction comprises of a summary similar to that of Koike and Terada
in \cite{KoikeTerada1987}. A very thorough introduction to this topic
can be found in \cite{Musili}. There is an injective group homomorphism
\begin{equation}
\psi:S_{k}\to S_{2k}\label{eq: Sk delta first time}
\end{equation}
whereby each Coxeter generator $s_{i}=(i\ i+1)$ is mapped to $\psi(s_{i})=\big(2i-1\ 2i+1\big)\big(2i\ 2i+2\big).$
We will write 
\[
\skdelta=\psi\left(S_{k}\right)\leq S_{2k}
\]
and the hyperoctahedral group is then defined to be the following
subgroup of $S_{2k}$:
\begin{equation}
H_{k}=\Big\langle\psi\left(S_{k}\right),\ s_{1},\ s_{3},\dots,\ s_{2k-1}\Big\rangle\cong\psi\left(S_{k}\right)\ltimes\mathcal{D},\label{eq: hyperoctahedral defin}
\end{equation}
where 
\[
\mathcal{D}=\big\langle s_{1},\ s_{3},\dots,\ s_{2k-1}\big\rangle\leq S_{2k}.
\]
For each $i=0,\ 1,\ 3,\dots,\ 2k-1,$ define a representation $\rho_{i}$
of $\mathcal{D}$ by 
\begin{equation}
\chi_{\rho_{i}}\left(s_{j}\right)=\begin{cases}
1 & \mathrm{if}\ j\leq i\\
-1 & i<j.
\end{cases}\label{eq: description of rho_i for Hk}
\end{equation}
Then, given an irreducible representation $V^{\lambda}$ of $S_{k},$
define an irreducible representation $W^{\lambda,\emptyset}$ of $H_{k}$
via (\ref{eq: hyperoctahedral defin}) -- given $h\in H_{k}$, we
have $h=\psi(\sigma)\delta$ and the character $\chi^{\lambda,\emptyset}$
of $W^{\lambda,\emptyset}$ is given by $\chi^{\lambda,\emptyset}(h)=\chi^{\lambda,\emptyset}(\psi(\sigma)\delta)=\chi^{\lambda}(\sigma).$
We can also define an irreducible representation of $H_{k}$ from
$\rho_{0},$ by extending $\rho_{0}$ by letting $\psi\left(S_{k}\right)$
act trivially. We denote this representation of $H_{k}$ by $W^{\emptyset,(k)}$
and we define 
\begin{equation}
W^{\emptyset,\lambda}=W^{\lambda,\emptyset}\otimes W^{\emptyset,(k)},\label{eq: definition of OUR irreducible Hk representation}
\end{equation}
with character given by $\chi^{\emptyset,\lambda}=\mathrm{sign}(\delta)\chi^{\lambda}(\sigma).$\footnote{This construction only works with $\rho_{0}$ since it is $\psi\left(S_{k}\right)$
invariant, but does not work for more general $\rho_{i}.$} 

Given any $i=1,\ 3,\dots,\ 2k-1,$ define $j_{i}\overset{\mathrm{def}}{=}\frac{i+1}{2}$
and define two subgroups of $H_{k},$ 
\[
H_{j_{i}}=\Big\langle\psi\left(S_{j_{i}}\right),\ s_{1},s_{3},\dots,\ s_{i}\Big\rangle
\]
and 
\[
H_{k-j_{i}}^{\prime}=\Big\langle\psi\left(S_{k-j_{i}}^{\prime}\right),s_{i+2},\ s_{i+4},\dots,s_{2k-1}\Big\rangle,
\]
where $S_{k-\frac{i+1}{2}}^{\prime}$ is defined as in \S\ref{subsec:SYMMETRIC GROUP}.
These are obviously isomorphic to smaller index hyperoctahedral groups,
and we get the following theorem (see e.g. \cite[Theorem 4.7.7]{Musili})
using the `Wigner--Mackey method of little groups'. 
\begin{thm}
\label{thm:construction of irreps for Hk}With the subgroups and representations
defined as above,
\begin{enumerate}
\item[a)]  for any $i=1,\ 3,\dots,\ 2k-1$ and Young diagrams $\mu\vdash j_{i}$,
$\nu\vdash k-j_{i},$ the representation 
\[
W^{\mu,\nu}=\mathrm{Ind}_{H_{j_{i}}\times H'_{k-j_{i}}}^{H_{k}}\Big[W^{\mu,\emptyset}\times\left(W^{\nu,\emptyset}\otimes W^{\emptyset,(k-j_{i})}\right)\Big]
\]
is an irreducible representation of $H_{k}$;
\item[b)]  the set 
\[
\big\{ W^{\mu,\nu}:\ (\mu,\nu)\models k\big\}
\]
 constitutes a complete set of representatives of the equivalence
classes of irreducible representations of $H_{k},$ where $(\mu,\nu)\models k$
denotes an ordered pair of Young diagrams $\mu,\nu$ with $|\mu|+|\nu|=n.$
\end{enumerate}
\end{thm}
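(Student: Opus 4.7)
The plan is to apply the Wigner--Mackey method of little groups to the semidirect product $H_k = \psi(S_k) \ltimes \mathcal{D}$, in which $\mathcal{D} \cong (\mathbb{Z}/2\mathbb{Z})^k$ is abelian and normal. First I would parametrize the irreducible characters of $\mathcal{D}$: since $\mathcal{D}$ is generated by the commuting involutions $s_1, s_3, \ldots, s_{2k-1}$, its (one-dimensional) characters are in bijection with subsets $T \subseteq \{1, 3, \ldots, 2k-1\}$ via $\rho_T(s_j) = -1$ iff $j \in T$. A short direct computation from the definition of $\psi$ shows that $\psi(S_k)$ acts on $\mathcal{D}$ by conjugation according to the natural $S_k$-action on $\{1,\ldots,k\}$ under the identification $s_{2j-1} \leftrightarrow j$. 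Hence the $\psi(S_k)$-orbits on characters of $\mathcal{D}$ are indexed by $|T| \in \{0, 1, \ldots, k\}$, and I would take as orbit representatives the characters $\rho_i$ of (\ref{eq: description of rho_i for Hk}) with $i \in \{-1, 1, 3, \ldots, 2k-1\}$, corresponding to $T_i = \{i+2, i+4, \ldots, 2k-1\}$ of size $k - \tfrac{i+1}{2}$.

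For each $\rho_i$ I would then identify its stabilizer in $H_k$. Since $\mathcal{D}$ is abelian, and since the stabilizer of $T_i$ inside $\psi(S_k)$ is $\psi(S_{(i+1)/2}) \times \psi(S'_{k-(i+1)/2})$, the little group of $\rho_i$ is
\[
K_i \;=\; H_{\frac{i+1}{2}} \times H'_{k-\frac{i+1}{2}}.
\]
Because $K_i / \mathcal{D}$ acts trivially on $\rho_i$, the character $\rho_i$ extends to a character $\widetilde{\rho_i}$ of $K_i$ (trivial on the $\psi$-part). The irreducibles of $K_i/\mathcal{D} \cong S_{(i+1)/2} \times S'_{k-(i+1)/2}$ are external tensor products $V^\mu \otimes V^\nu$ with $\mu \vdash (i+1)/2$ and $\nu \vdash k - (i+1)/2$; inflated to $K_i$ and tensored with $\widetilde{\rho_i}$, these exhaust the irreducibles of $K_i$ whose restriction to $\mathcal{D}$ is $\rho_i$-isotypic.

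By the Wigner--Mackey theorem (see e.g.\ \cite{Musili}), inducing $\widetilde{\rho_i} \otimes (V^\mu \otimes V^\nu)$ from $K_i$ to $H_k$ yields an irreducible of $H_k$, and as $i$ and $(\mu, \nu)$ vary over all admissible data we obtain a complete, non-redundant list of isomorphism classes of irreducibles of $H_k$, establishing (b); note that the orbit $i=-1$, corresponding to $\mu = \emptyset$, is the one already handled by definition (\ref{eq: definition of OUR irreducible Hk representation}). For (a), it remains to recognize $\widetilde{\rho_i} \otimes (V^\mu \otimes V^\nu)$ as the representation in the statement: on the $H_{(i+1)/2}$ factor, $\widetilde{\rho_i}$ restricts to the trivial character and the first piece is $W^{\mu, \emptyset}$; on the $H'_{k-(i+1)/2}$ factor, $\widetilde{\rho_i}$ restricts to the sign character on the $\mathcal{D}$-generators $s_{i+2}, s_{i+4}, \ldots, s_{2k-1}$, which together with $V^\nu$ gives precisely $W^{\nu, \emptyset} \otimes W^{\emptyset, (k-(i+1)/2)}$ by Remark \ref{rem:character of irreducible Hk rep} and definition (\ref{eq: definition of OUR irreducible Hk representation}).

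The main obstacle is purely bookkeeping: one must verify carefully that the conjugation action of $\psi(S_k)$ on the generators $s_1, s_3, \ldots, s_{2k-1}$ of $\mathcal{D}$ really is the natural permutation action on $k$ letters. This is where the specific definition $\psi(s_i) = ([2i-1][2i+1])(2i[2i+2])$ is used in an essential way and must be checked by a short computation on the Coxeter generators of $S_k$. Once this identification is established, the stabilizers match the subgroups $H_{(i+1)/2}$ and $H'_{k-(i+1)/2}$ defined in the paper, and the rest is a direct application of Wigner--Mackey theory for semidirect products with an abelian normal subgroup.
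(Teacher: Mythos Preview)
Your proposal is correct and is exactly the approach the paper points to: the paper does not give its own proof of this theorem but merely cites \cite[Theorem 4.7.7]{Musili} and remarks that it follows from the ``Wigner--Mackey method of little groups,'' which is precisely what you carry out. Your only (harmless) deviation is indexing the all-$(-1)$ character of $\mathcal{D}$ by $i=-1$ rather than the paper's $i=0$; also note that part (a) of the statement as written only ranges over $i\ge 1$, with the $\mu=\emptyset$ case already handled by the paper's separate definition of $W^{\emptyset,\lambda}$, so your treatment of that boundary case is consistent with the paper.
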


\begin{rem}
In the case of $\mu=\emptyset,$ it is easily seen that $\mathrm{dim}\left(W^{\emptyset,\nu}\right)=d_{\nu},$
the dimension of the irreducible $S_{k}$ representation, $V^{\nu}.$ 
\end{rem}

\subsubsection{M{\"o}bius Inversion\label{subsec:MOBIUS INVERSION}}

Here we give definitions and results that are needed for our description
of the Weingarten calculus in \S\ref{SUBsubsec:THE WEINGARTEN CALCULUS FOR THE SYMMETRIC GROUP}.
Most of the details can be found in the foundational paper of Rota
\cite{Rota}. A \emph{poset }$(P,\leq)$ is a set $P$ with a partial
order $\leq$\emph{. }In general, we will simply write $P$ in place
of $(P,\leq)$ in reference to a poset, unless it is necessary to
be explicit. A \emph{lattice }is a poset in which the maximum and
minimum of two elements is defined and hey will be called the \emph{join
}and \emph{meet }respectively. We denote the join of two elements
$x,y$ by $x\vee y$ and we denote the meet of these two elements
by $x\wedge y$. Let $P$ be a poset and $x,y\in P.$ A segment $[x,y]$
is defined as follows: 
\[
[x,y]\overset{\mathrm{def}}{=}\{z\in P:\ x\leq z\leq y\}.
\]
Open and half open segments are defined similarly. The M{\"o}bius
function $\mu(x,y)$ of a poset $P$ is defined inductively in \cite[Proposition 1]{Rota}.
For a segment $[x,y]$ of a poset $P$, we first set $\mu(x,x)=1$.
Then, assuming $\mu(x,z)$ is defined for all $z\in[x,y),$ we inductively
define 
\begin{equation}
\mu(x,y)=-\sum_{x\leq z<y}\mu(x,z).\label{eq:mobius function definition}
\end{equation}
The M{\"o}bius inversion formula below is given in \cite[Corollary 1]{Rota}. 
\begin{thm}[M{\"o}bius Inversion Formula]
\label{thm:mobius inversion formula}Let $(P,\leq)$ be a locally
finite poset and let $r:P\to\C.$ Suppose there exists a $p\in P$
such that $x>p\implies r(x)=0.$ Then, if 
\[
s(x)=\sum_{y\geq x}r(y)
\]
 we have 
\[
r(x)=\sum_{y\geq x}\mu(x,y)s(y).
\]
\end{thm}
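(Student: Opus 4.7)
The plan is to prove the inversion formula by the standard strategy: expand the right-hand side in terms of $r$, swap the order of summation, and then use the defining property of $\mu$ to collapse the inner sum to a Kronecker delta.

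Concretely, I would start by writing
\[
\sum_{y\geq x}\mu(x,y)s(y)=\sum_{y\geq x}\mu(x,y)\sum_{z\geq y}r(z),
\]
and then interchange the two summations to obtain
\[
\sum_{z\geq x}r(z)\sum_{x\leq y\leq z}\mu(x,y).
\]
The hypothesis that $r(z)=0$ whenever $z>p$ is what makes this manipulation legal: only finitely many $z$ contribute (those in $[x,p]$ together with the single point $z=x$), so we are only ever reordering a finite double sum, never an ill-defined one. This is the only delicate point of the argument, and it is really the reason the hypothesis is included in the statement.

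Next I would identify the inner sum via the defining recursion of the M\"obius function. From the inductive definition
\[
\mu(x,y)=-\sum_{x\leq z<y}\mu(x,z),\qquad \mu(x,x)=1,
\]
a one-line rearrangement yields
\[
\sum_{x\leq y\leq z}\mu(x,y)=\begin{cases}1 & \text{if }z=x,\\ 0 & \text{if }z>x.\end{cases}
\]
(The case $z=x$ is immediate, and the case $z>x$ is exactly the recursion rewritten.) Substituting this back into the double sum leaves only the $z=x$ term, giving
\[
\sum_{y\geq x}\mu(x,y)s(y)=r(x),
\]
which is the claimed inversion formula. The whole argument is thus essentially a formal computation; the only place requiring any care is justifying the swap of summations, handled by the vanishing hypothesis on $r$.
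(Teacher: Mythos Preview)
Your argument is correct and is exactly the standard proof of M\"obius inversion. Note, however, that the paper does not actually prove this theorem: it is stated with a citation to \cite[Corollary~1]{Rota} and used as background, so there is no ``paper's own proof'' to compare against. One small remark on your justification of the summation swap: the hypothesis $x>p\implies r(x)=0$ does not by itself force only finitely many $z$ to contribute in a general locally finite poset (elements incomparable to $p$ are not excluded), but the paper explicitly notes immediately after the statement that it only works with finite posets, so the interchange is automatically valid in the intended setting.
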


\subsubsection{The Partition Algebra\label{subsec:THE PARTITION ALGEBRA}}

Jones \cite{Jones}, and independently Martin \cite{Martin1994},
initially developed the partition algebra in relation to statistical
mechanics and it has since been used to develop various formulations
of Schur--Weyl duality. We write 
\[
\mathrm{Part}\left([n]\right)=\left(\{\mathrm{set\ partitions\ of}\ [n]\},\ \leq\right),
\]
where $\pi_{1}\leq\pi_{2}$ if $\pi_{1}$ is a refinement of $\pi_{2},$
meaning every block of $\pi_{1}$ is contained in a block of $\pi_{2}.$
In this setting, $\mathrm{Part}\left([n]\right)$ is a lattice. Given
$\pi\in\mathrm{Part}\left([n]\right)$, we write $i\sim j$ to indicate
that $i$ and $j$ belong to the same block of the partition. 

\begin{rem}
Given this partial ordering, one can find the M�bius function for
partitions using the inductive definition (\ref{eq:mobius function definition}).
Suppose $\pi_{1}=\Big\{\mathcal{S}_{1},\dots,\mathcal{S}_{l}\Big\}$
consists of $l$ subsets and that $\pi_{2}$ is a refinement of $\pi_{1}$,
with each subset $\mathcal{S}_{i}$ of $\pi_{1}$ splitting into a
further $m_{i}$ subsets, $\mathcal{T}_{1},\dots,\mathcal{T}_{m_{i}}.$
So $\pi_{2}$ consists of $\sum_{i}m_{i}=m$ subsets. Then 
\[
\mu(\pi_{1},\pi_{2})=(-1)^{m-l}\prod_{i=1}^{l}(m_{i}-1)!,
\]
see \cite[Section 7]{Rota}. 
\end{rem}

The number $1\leq s\leq2k$ of blocks of $\pi\in\mathrm{Part}([n])$
is the \emph{size} of the partition, denoted $|\pi|$. The partition
algebra $\P$ is the $\C$--linear span of $\part,$ with a multiplication
described using partition diagrams. For each $\pi\in\part$, we construct
a diagram with $2k$ vertices, drawn in two rows of $k$, labeled
from $1,\dots,k$ on the top row and from $k+1,\dots,2k$ on the bottom
row. An edge is drawn between two vertices whenever they are in the
same subset of $\pi$.\footnote{The diagram for $\pi_{1}\wedge\pi_{2}$ is obtained by removing any
edges in the diagram of $\pi_{1}$ that are not present in that of
$\pi_{2}.$ The diagram for the join $\pi_{1}\vee\pi_{2}$ is obtained
by adding all edges of $\pi_{2}$ to $\pi_{1},$ and then completing
every connected component. } Obviously, any such diagram also defines some $\pi\in\part$. The
product $\pi_{1}\pi_{2}$ is computed as follows, let $x$ be an indeterminate. 
\begin{enumerate}
\item Identify the bottom row of vertices in $\pi_{1}$ with the top row
of vertices in $\pi_{2}$ to obtain a diagram with $3$ rows of $k$
vertices. 
\item Let $\gamma$ be the number of connected components of this diagram
with vertices in only the middle row. 
\item Add edges between any two vertices in the same connected component,
if there is not already an edge. 
\item Remove the middle row of vertices and any adjacent edges to vertices
in the middle row to obtain a new partition diagram. Label this diagram
$\pi_{3}$. 
\item Define $\pi_{1}\pi_{2}=x^{\gamma}\pi_{3}$. 
\end{enumerate}
\begin{example*}
For $k=3,$ $\pi_{1}=\Big\{\{1,2\},\{3,5,6\},\{4\}\Big\}$ and $\pi_{2}=\Big\{\{1\},\{2,4\},\{3,6\},\{5\}\Big\}$,
then $\pi_{1}\pi_{2}=x\Big\{\{1,2\},\{3,4,6\},\{5\}\Big\}.$ Diagrammatically, 

\begin{figure}[H]
\centering
\begin{tikzpicture}[main/.style = draw, circle] 
\node[main, fill, inner sep=1pt] (1) {};
\node[main, fill, inner sep=1pt] (2) [right of =1] {};
\node[main, fill, inner sep=1pt] (3) [right of =2] {};
\node(4) [below right of =3] {$\times$};
\node[main, fill, inner sep=1pt] (5) [above right of=4] {};
\node[main, fill, inner sep=1pt] (6) [right of=5]{};
\node[main, fill, inner sep=1pt] (7) [right of=6] {};
\node(8) [below right of =7] {$=$};
\node(21) [right of =8] {$x$};
\node[main, fill, inner sep=1pt] (9) [above right of=21] {};
\node[main, fill, inner sep=1pt] (10) [right of=9] {};
\node[main, fill, inner sep=1pt] (11) [right of=10] {};

\node[main, fill, inner sep=1pt] (12) [below left of =4]{};
\node[main, fill, inner sep=1pt] (13) [left of =12] {};
\node[main, fill, inner sep=1pt] (14) [left of =13] {};
\node[main, fill, inner sep=1pt] (15) [below right of =4] {};
\node[main, fill, inner sep=1pt] (16) [right of =15] {};
\node[main, fill, inner sep=1pt] (17) [right of =16] {};
\node[main, fill, inner sep=1pt] (18) [below right of =21]{};
\node[main, fill, inner sep=1pt] (19) [right of =18] {};
\node[main, fill, inner sep=1pt] (20) [right of =19] {};

\draw (1)--(2) (3)--(12) (3)--(13) (13)--(12) (6)--(15) (7)--(17) (9)--(10) (11)--(18) (11)--(20) (18) to [out=335,in=225,looseness=1] (20);
\end{tikzpicture}
\end{figure}
\end{example*}
If $\mathbb{C}(x)$ is the field of rational functions with complex
coefficients, the \emph{partition algebra} $P_{k}(x)$ is the $\mathbb{C}(x)$--linear
span of $\part$. With the multiplication as described above, this
is an associative algebra, with identity element: 
\begin{figure}[H]
\centering
\begin{tikzpicture}[main/.style = draw, circle] 
\node[main, fill, inner sep=1pt] (1) {};
\node[main, fill, inner sep=1pt] (2) [right of =1] {};
\node (3) [right of =2]{$\dots$};
\node[main, fill, inner sep=1pt] (4)[right of =3] {};

\node[main, fill, inner sep=1pt] (5) [below of=1]{};
\node[main, fill, inner sep=1pt] (6) [right of =5] {};
\node (7) [right of =6]{$\dots$};
\node[main, fill, inner sep=1pt] (8)[right of =7] {};
\draw (1)--(5) (2)--(6) (4)--(8);.
\end{tikzpicture}
\end{figure}
 For each $n\in\mathbb{C}$, we define the partition algebra $P_{k}(n)$
over $\mathbb{C}$ as the linear span of $\part$, with $x$ replaced
by $n$ in the multiplication described. For most choices of $n,$
this is a semisimple algebra. 
\begin{thm}[\cite{MartinSaleur1992}]
The partition algebra $P_{k}(n)$ is semisimple for any $n\in\C,$
unless $n\in\mathbb{Z}\cap[0,2k-1].$
\end{thm}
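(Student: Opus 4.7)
The plan is to prove semisimplicity via a two-step deformation strategy, separating the generic case from the analysis of exceptional specializations. First, I would establish that the partition algebra $P_k(x)$ over the rational function field $\mathbb{C}(x)$ is semisimple by constructing a family of ``cell modules'' $W_k^\lambda$ indexed by Young diagrams $\lambda$ with $|\lambda|\leq k$ (the parameter $|\lambda|$ records the propagating number of diagrams), showing these are pairwise non-isomorphic simple modules, and verifying the dimension identity
\[
\sum_{|\lambda|\leq k} \bigl(\dim_{\mathbb{C}(x)} W_k^\lambda\bigr)^2 \;=\; B(2k) \;=\; \dim_{\mathbb{C}(x)} P_k(x),
\]
where $B(2k)$ is the Bell number counting elements of $\part$.

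To pass from the generic algebra to the complex specialization $\P$, I would equip each cell module $W_k^\lambda$ with the canonical invariant bilinear form arising from diagrammatic multiplication. Its Gram determinant is a polynomial $d_\lambda(x) \in \mathbb{C}[x]$, and since the form descends to each specialization, a standard argument shows that $\P$ is semisimple if and only if $d_\lambda(n) \neq 0$ for every $\lambda$ with $|\lambda|\leq k$. The non-semisimple specializations are therefore precisely the finite set of common roots of the $d_\lambda$'s, so the substance of the theorem reduces to the explicit identification of this root set with $\mathbb{Z} \cap [0,\,2k-1]$.

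The main obstacle is locating these roots concretely. Following Martin and Saleur, the approach is to use the tower of subalgebras $P_{k-1}(x) \subset P_{k-\frac{1}{2}}(x) \subset P_k(x)$ (where the intermediate algebra is spanned by diagrams in which vertex $k$ is forced to lie in the same block as vertex $2k$) together with the restriction--induction branching rules for cell modules. This yields an inductive recursion expressing $d_\lambda(x)$ as a product of linear factors indexed by contents of boxes added to or removed from $\lambda$; the combinatorial heart of the proof is that each such linear factor has an integer root lying in $[0,\,2k-1]$, which pins the exceptional locus down exactly. As a partial sanity check, one may note that the Schur--Weyl--Jones duality established earlier supplies semisimplicity whenever $n \geq 2k$, since $\P$ then surjects with trivial kernel onto the semisimple centralizer algebra $\endsn\bigl(\cnk\bigr)$; this recovers one slice of the statement but is blind to the non-integer and negative-integer cases, which is precisely why the Gram determinant analysis is unavoidable.
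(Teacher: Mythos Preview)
The paper does not prove this theorem: it is quoted as a result of Martin and Saleur \cite{MartinSaleur1992} and used as a black box, so there is no ``paper's own proof'' to compare against. Your outline is a faithful sketch of the strategy that actually appears in the literature (cell modules indexed by $\lambda$ with $|\lambda|\le k$, generic semisimplicity over $\mathbb{C}(x)$ via the dimension count $\sum (\dim W_k^\lambda)^2 = B(2k)$, and then control of the exceptional specializations through Gram determinants and the tower $P_{k-1}\subset P_{k-\frac12}\subset P_k$). The one place to be careful is your sanity check: the claim that $\P$ embeds into $\endsn\bigl(\cnk\bigr)$ for $n\ge 2k$ is itself a nontrivial fact (injectivity of $\hat\theta$), and the usual proof of that injectivity already amounts to showing a certain Gram matrix is nonsingular for $n\ge 2k$, so it is not really an independent confirmation but rather a special case of the same machinery.
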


\begin{prop}[{\cite[Proposition 1]{Martin1996}}]
The following elements generate $\P:$
\begin{itemize}
\item $\mathrm{Id}=\Big\{\{1,k+1\},\dots,\{k,2k\}\Big\}$
\item $\mathcal{S}_{ij}=\Big\{\{1,k+1\},\dots\{i,k+j\},\dots,\{j,k+i\},\dots,\{k,2k\}\Big\}$
for $i,j=1,\dots,k$
\item $\mathcal{A}_{i}=\Big\{\{1,k+1\},\dots,\{i\},\{k+i\},\dots,\{k,2k\}\Big\}$
for $i=1,\dots,k$ and
\item $\mathcal{A}_{ij}=\Big\{\{1,k+1\},\dots,\{i,j,k+i,k+j\},\dots,\{k,2k\}\Big\}$
for $i,j=1,\dots,k.$
\end{itemize}
\end{prop}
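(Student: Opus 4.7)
The plan is to show that any set partition $\pi \in \part$ can be written as a product of the listed elements, organizing the proof around three operations: permuting columns, cutting propagating lines, and merging lines into larger blocks.

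First, it is straightforward to check that $\mathrm{Id}$ together with the $\mathcal{S}_{ij}$ generate a copy of $\C[S_k]$ inside $\P$, namely the span of the ``permutation diagrams'' whose blocks are $\{i,\,k+\sigma(i)\}$ for some $\sigma \in S_k$. Because left multiplication by such a diagram permutes the top row of an arbitrary partition and right multiplication permutes the bottom row, it suffices to build up a set of $S_k \times S_k$-orbit representatives; every other partition is then obtained by suitable conjugation using the $\mathcal{S}_{ij}$'s.

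Next, I would show by direct diagram computation that the $\mathcal{A}_{ij}$'s merge propagating lines into larger blocks without introducing stray factors of $n$. For example, $\mathcal{A}_{12}\cdot\mathcal{A}_{23}$ yields the partition with a single ``full'' propagating block $\{1,2,3,k+1,k+2,k+3\}$ together with the identity lines $\{4,k+4\},\ldots,\{k,2k\}$, with no middle-row loops. Inductively, any ``full'' propagating block $\{i_1,\ldots,i_p,\,k+i_1,\ldots,k+i_p\}$ lies in the generated subalgebra. To obtain asymmetric and non-propagating blocks, I would combine $\mathcal{A}_{ij}$ with $\mathcal{A}_i$: for instance, $\mathcal{A}_1\cdot\mathcal{A}_{12}$ produces a top singleton $\{1\}$ together with the one-sided block $\{2,k+1,k+2\}$. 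Iterating such cuts and merges, together with $S_k$-conjugation, lets one produce any propagating block with prescribed top and bottom vertex sets, as well as any top-only or bottom-only block.

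Finally, I would assemble an arbitrary $\pi$ block-by-block as a product of these ``basic'' diagrams, interleaved with $\mathcal{S}_{ij}$-conjugations to target the correct positions. The main obstacle is the diagrammatic bookkeeping: in partition-algebra multiplication, every connected component lying entirely in the middle row contributes a factor of $n$, and such spurious factors must be avoided throughout the construction. The natural way to handle this is by an induction on a complexity statistic such as the number of blocks of $\pi$ together with its rank (the number of propagating blocks); at each step one applies a single $\mathcal{A}_i$ or $\mathcal{A}_{ij}$, conjugated by an element of $S_k$, on the appropriate side of an already-constructed partition, and verifies explicitly that the resulting stacked diagram has no middle-only connected component. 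Once this is done uniformly, the induction closes and the proposition follows.
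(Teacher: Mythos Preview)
The paper does not give its own proof of this proposition: it is quoted verbatim as \cite[Proposition~1]{Martin1996} and is used purely as background on the structure of $\P$. There is therefore no argument in the paper to compare your proposal against.

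Your outline is broadly the standard one and matches the structure of Martin's original argument: use the $\mathcal{S}_{ij}$ to reduce to $S_k\times S_k$-orbit representatives, use the $\mathcal{A}_{ij}$ to merge propagating strands into larger blocks, and use the $\mathcal{A}_i$ to cut strands and produce non-propagating (top-only or bottom-only) blocks, all while checking that no middle-only components---and hence no spurious powers of $n$---arise. The only place where your sketch is thin is the final ``assemble $\pi$ block-by-block'' step: you should be explicit about the inductive statistic (propagating number together with, say, total number of blocks works) and about the order in which you apply the generators, since the naive order can create temporary middle-row loops. Once that bookkeeping is written out carefully the argument is complete, but as written it is a plan rather than a proof.
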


There is a surjection 
\begin{equation}
R:\P\to\C\left[S_{k}\right],\label{eq: restricting Pkn to CS_k}
\end{equation}
whereby $\mathcal{S}_{ii+i}\mapsto s_{i}$ and $\mathcal{A}_{i},\mathcal{A}_{ij}\mapsto0$
and a corresponding algebra injection 
\begin{equation}
\iota:\C\left[S_{k}\right]\to\P,\label{eq: Including CS_k  in Pkn}
\end{equation}
where, for $\sigma\in S_{k},$

\[
\iota(\sigma)=\Big\{\{1,k+\sigma^{-1}(1)\},\dots,\{k,k+\sigma^{-1}(k)\}\Big\}.
\]
It is corresponding in the sense that $R\circ\iota$ is the identity
map on $\C\left[S_{k}\right]$. Indeed, if $\sigma=s_{i_{1}}\dots s_{i_{m}},$
then it is not hard to see that $\iota(\sigma)=\mathcal{S}_{i_{1}i_{1}+1}\dots\mathcal{S}_{i_{m}i_{m}+1},$
so that 
\[
R(\iota(\sigma))=s_{i_{1}}\dots s_{i_{m}}=\sigma.
\]
Intuitively, each permutation in $S_{k}$ corresponds to some matching
of the two rows of vertices in the diagram. Henceforth, any reference
to the inclusion or restriction between $\C\left[S_{k}\right]$ and
$\P$ will reference maps (\ref{eq: restricting Pkn to CS_k}) and
(\ref{eq: Including CS_k  in Pkn}). 

\subsubsection{The Weingarten Calculus for the Symmetric Group\label{SUBsubsec:THE WEINGARTEN CALCULUS FOR THE SYMMETRIC GROUP}}

Here, we describe the method for integrating over $S_{n}$ outlined
by Collins, Matsumoto and Novak in the short survey \cite{CollinsWeingartenShort},
in the language to be used in this paper. The goal is to explicitly
compute 
\[
w\overset{\mathrm{def}}{=}\int_{S_{n}}e_{g(i_{1})}\otimes\dots\otimes e_{g(i_{k})}dg\in\cnk,
\]
with respect to the Haar measure. We write 
\[
w=\sum_{1\leq j_{1},\dots,j_{k}\leq n}\left(\alpha_{j_{1},\dots,j_{k}}\right)e_{j_{1}}\otimes\dots\otimes e_{j_{k}}
\]
and define, for each $\pi\in\mathrm{Part}([k]),$ a linear functional
$\pi^{\mathrm{strict}}:\cnk\to\C$ where, for each $I=(i_{1},\dots,i_{k}),$
\begin{equation}
\pi^{\mathrm{strict}}\left(e_{I}\right)=\begin{cases}
1 & \mathrm{if\ }j\sim l\ \mathrm{in}\ \pi\iff e_{i_{j}}=e_{i_{l}}\\
0 & \mathrm{otherwise.}
\end{cases}\label{eq:pi strict definition}
\end{equation}
In general, there exists a $g\in S_{n}$ such that 
\begin{equation}
e_{g(i_{1})}\otimes\dots\otimes e_{g(i_{k})}=e_{j_{1}}\otimes\dots\otimes e_{j_{k}},\label{eq: e_g(I) =00003D e_J}
\end{equation}
 \emph{if and only if} the multi--index $I=\left(i_{1},\dots,i_{k}\right)$
and the multi--index $J=\left(j_{1},\dots,j_{k}\right)$ `define
the same partition' i.e. there is exactly one partition $\pi\in\mathrm{Part}\left([k]\right)$
for which $\pi^{\mathrm{strict}}\left(e_{I}\right)\pi^{\mathrm{strict}}\left(e_{J}\right)=1.$
In this case, there are exactly $(n-|\pi|)!$ permutations $g\in S_{n}$
satisfying (\ref{eq: e_g(I) =00003D e_J}), leading to the following
proposition. 
\begin{prop}
\label{thm:weingartenstrict} The coefficient $\alpha_{j_{1},\dots,j_{k}}$
of $e_{j_{1}}\otimes\dots\otimes e_{j_{k}}$ in $w$ is 
\begin{equation}
\sum_{\pi\in\mathrm{Part}\left([k]\right)}\pi^{\mathrm{strict}}\left(e_{i_{1}}\otimes\dots\otimes e_{i_{k}}\right)\pi^{\mathrm{strict}}\left(e_{j_{1}}\otimes\dots\otimes e_{j_{k}}\right)\frac{1}{(n)_{|\pi|}},\label{eq:weingartenstrict}
\end{equation}
where $(n)_{|\pi|}=n(n-1)\dots(n-|\pi|+1)$ is the Pochhammer symbol.
\end{prop}

This can be alternatively formulated using M{\"o}bius inversion,
as in \cite[Theorem 1.3 and Proposition 1.4]{BanicaCurran2010}. Define
the linear functional $\pi^{\mathrm{weak}}:\cnk\to\C$ , where, for
each $I=(i_{1},\dots,i_{k}),$ 
\begin{equation}
\pi^{\mathrm{weak}}\left(e_{I}\right)=\begin{cases}
1 & \mathrm{if}\ j\sim l\ \mathrm{in}\ \pi\implies e_{i_{j}}=e_{i_{l}}\\
0 & \mathrm{otherwise.}
\end{cases}\label{eq:pi weak definition}
\end{equation}
Obviously, 
\[
\pi^{\mathrm{strict}}\left(e_{i_{1}}\otimes\dots\otimes e_{i_{k}}\right)\neq0\implies\pi^{\mathrm{weak}}\left(e_{i_{1}}\otimes\dots\otimes e_{i_{k}}\right)\neq0
\]
and in fact, 
\begin{equation}
\pi^{\mathrm{weak}}=\sum_{\pi_{1}\geq\pi}\pi_{1}^{\mathrm{strict}}.\label{eq:pi weak in terms of strict}
\end{equation}
Using Theorem \ref{thm:mobius inversion formula}, we obtain the formula
\begin{equation}
\pi^{\mathrm{strict}}=\sum_{\pi_{1}\geq\pi}\mu(\pi,\pi_{1})\pi_{1}^{\mathrm{weak}}\label{eq:pi strict in terms of weak}
\end{equation}
and can thus rewrite (\ref{eq:weingartenstrict}) as 
\[
\sum_{\pi_{1},\pi_{2}\in\mathrm{Part}\left([k]\right)}\pi_{1}^{\mathrm{weak}}\left(e_{I}\right)\pi_{2}^{\mathrm{weak}}\left(e_{J}\right)\mathrm{Wg_{n,k}}\left(\pi_{1},\pi_{2}\right),
\]
where $\mathrm{Wg}_{n,k}$ is the Weingarten function for $S_{n}$,
given by 
\[
\mathrm{Wg}_{n,k}(\pi_{1},\pi_{2})=\sum_{\pi\leq\pi_{1}\wedge\pi_{2}}\mu(\pi,\pi_{1})\mu(\pi,\pi_{2})\frac{1}{(n)_{|\pi|}}.
\]

\subsection{Schur--Weyl--Jones duality\label{subsec: DUALITY BETWEEN THE SYMMETRIC GROUP AND PARTITION ALGEBRA}}

\subsubsection{Duality between $\protect\P$ and $S_{n}$\label{subsec:DUALITY BETWEEN Pk(n) AND Sn}}

Given $\pi\in\part,$ define a right action of $\pi$ on $\cnk$ by
\begin{equation}
\left\langle \left(e_{i_{1}}\otimes\dots\otimes e_{i_{k}}\right)\pi,\ e_{i_{k+1}}\otimes\dots\otimes e_{i_{2k}}\right\rangle =\begin{cases}
1 & \mathrm{if}\ j\sim k\ \mathrm{in}\ \pi\implies i_{j}=i_{k}\\
0 & \mathrm{otherwise.}
\end{cases}\label{eq:action of pkn on cnk}
\end{equation}
Extending linearly gives $\cnk$ a right $\P$--module structure
and defines, for each $\pi\in\P$, an element 
\[
P_{\pi}^{\mathrm{weak}}\in\mathrm{End}\left(\cnk\right),
\]
where $P_{\pi}^{\mathrm{weak}}(v)=v\pi.$ We will denote the map $\pi\mapsto P_{\pi}^{\mathrm{weak}}$
by 
\[
\hat{\theta}:\P\to\mathrm{End}\left(\cnk\right).
\]
 
\begin{rem}
\label{rem: P_pi strict definition}We have 
\[
P_{\pi}^{\mathrm{weak}}=\sum_{J,I}\pi^{\mathrm{weak}}\left(e_{J}\otimes e_{I}\right)\left(\check{e}_{J}\otimes e_{I}\right)
\]
via the canonical isomorphism $\mathrm{End}\left(\cnk\right)\cong\left(\check{\C}^{n}\right)^{\otimes k}\otimes\cnk,$
where the sum is over all multi--indices $J,I$ of size $k.$ We
also define $P_{\pi}^{\mathrm{strict}}\in\mathrm{End}\left(\cnk\right)$
by 
\[
\left\langle P_{\pi}^{\mathrm{strict}}\left(e_{i_{1}}\otimes\dots\otimes e_{i_{k}}\right),\ e_{i_{k+1}}\otimes\dots\otimes e_{i_{2k}}\right\rangle =\begin{cases}
1 & \mathrm{if}\ j\sim k\ \mathrm{in}\ \pi\iff i_{j}=i_{k}\\
0 & \mathrm{otherwise,}
\end{cases}
\]
noting that 
\[
P_{\pi}^{\mathrm{strict}}=\sum_{J,I}\pi^{\mathrm{strict}}\left(e_{J}\otimes e_{I}\right)\left(\check{e}_{J}\otimes e_{I}\right).
\]
This is the form used in the statement of our main theorem. 
\end{rem}

Now, $\cnk$ is also a left $S_{n}$--module, as the $k^{\mathrm{th}}$
tensor power of the defining representation of $S_{n}.$ Denote this
representation 
\[
\hat{\rho}:\C\left[S_{n}\right]\to\mathrm{End}\left(\cnk\right),
\]
so that, for any $\tau\in S_{n}$ and basis vector $\ei\in\cnk,$
we have 
\[
\hat{\rho}(\tau)\left(\ei\right)=e_{\tau(i_{1})}\otimes\dots\otimes e_{\tau(i_{k})}.
\]
Schur--Weyl--Jones duality asserts that these actions generate full
mutual centralizers of one another in $\mathrm{End}\left(\cnk\right).$
\begin{thm}[\cite{Jones}]
\label{partitionduality}For $n\geq2k,$ where $\P$ acts via $\hat{\theta}$
and $S_{n}$ acts via $\hat{\rho},$ 
\begin{enumerate}
\item $\P$ generates $\mathrm{End}_{S_{n}}\left(\cnk\right)$
\item $S_{n}$ generates $\mathrm{End}_{\P}\left(\cnk\right)$.
\end{enumerate}
\end{thm}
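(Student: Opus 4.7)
The plan splits along the two claims of the theorem. First I would verify directly from the explicit definitions of $\hat{\rho}$ and $\hat{\theta}$ that the two actions commute, so that the non-trivial content is the reverse inclusions $\mathrm{End}_{S_n}(\cnk) \subseteq \hat{\theta}(\P)$ and $\mathrm{End}_{\P}(\cnk) \subseteq \hat{\rho}(\C[S_n])$.

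For part (1), I would analyze $\mathrm{End}_{S_n}(\cnk)$ combinatorially. An endomorphism $\phi$ is $S_n$-invariant iff its matrix entries in the standard basis satisfy $\phi_{\tau I, \tau J} = \phi_{I,J}$ for every $\tau \in S_n$ and every pair of multi-indices $I, J \in [n]^k$. Therefore $\mathrm{End}_{S_n}(\cnk)$ has a natural basis indexed by $S_n$-orbits on pairs $(I,J) \in [n]^k \times [n]^k$, and each such orbit is characterized by the set partition $\bar{\pi} \in \part$ obtained by grouping $a \sim b$ in $[2k]$ iff the $a$th and $b$th coordinates of the concatenation $(i_1,\dots,i_k,j_1,\dots,j_k)$ coincide. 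The hypothesis $n \geq 2k$ guarantees that every partition $\pi \in \part$ (which has at most $2k$ blocks) is realized by some pair $(I,J)$, giving $\dim \mathrm{End}_{S_n}(\cnk) = |\part|$.

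Next I would read off the matrix entries of $p_{\pi}$ from (\ref{eq:action of pkn on cnk}): the entry at $(e_I, e_J)$ equals $1$ precisely when the orbit partition $\bar{\pi}$ of $(I,J)$ is coarser than $\pi$ in the refinement order, i.e.\ $\bar{\pi} \geq \pi$, and $0$ otherwise. Ordering $\part$ compatibly with refinement, the expansion of $\{p_{\pi}\}_{\pi \in \part}$ in the orbit basis of $\mathrm{End}_{S_n}(\cnk)$ is therefore upper-unitriangular, hence invertible. This simultaneously shows linear independence of the $p_{\pi}$ and that they span $\mathrm{End}_{S_n}(\cnk)$; equivalently, one can view this transition as the Möbius inversion on set partitions discussed in Section \ref{subsec:MOBIUS INVERSION}, which recovers the orbit characteristic functions from the $p_{\pi}$.

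For part (2), I would invoke the double centralizer theorem. The image $\hat{\rho}(\C[S_n])$ is semisimple as a homomorphic image of the semisimple group algebra $\C[S_n]$, and by part (1) its centralizer inside $\mathrm{End}(\cnk)$ equals $\hat{\theta}(\P)$. Jacobson's double centralizer theorem then yields that the centralizer of $\hat{\theta}(\P)$ inside $\mathrm{End}(\cnk)$ is precisely $\hat{\rho}(\C[S_n])$, which is exactly the desired statement. The main obstacle in the argument is the combinatorial bookkeeping in part (1): making the correspondence between $S_n$-orbits and set partitions of $[2k]$ rigorous, and in particular using the condition $n \geq 2k$ in the crucial step that every partition in $\part$ is realized by some pair of multi-indices.
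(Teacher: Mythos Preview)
The paper does not supply its own proof of this theorem: it is quoted as a background result attributed to Jones \cite{Jones}, stated immediately before the theorem with the phrase ``This is summarized in the following theorem due to \cite{Jones}.'' So there is no in-paper argument to compare against.

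That said, your proposal is correct and is essentially the standard proof one finds in the literature (see e.g.\ Jones's original paper or the expositions of Halverson and Halverson--Ram). The orbit description of $\mathrm{End}_{S_n}(\cnk)$, the observation that $n\geq 2k$ makes every $\pi\in\part$ realizable, the upper-unitriangular expansion of the $p_\pi$ in the orbit basis (equivalently the M\"obius inversion relating $\pi^{\mathrm{weak}}$ and $\pi^{\mathrm{strict}}$, exactly as in (\ref{eq:pi weak in terms of strict}) and (\ref{eq:pi strict in terms of weak})), and the double centralizer theorem for part (2) are precisely the ingredients one expects. One small remark: the unitriangularity argument also gives that $\hat\theta$ is injective when $n\geq 2k$, which is implicitly used later in the paper; you might make that consequence explicit.
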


\subsubsection{Simple Modules for the Partition Algebra\label{subsec:SIMPLE MODULES FOR THE PARTITION ALGEBRA}}

For each $\lambda\vdash k,$ recall the notation $\lambda^{+}(n)\vdash n$
for the Young diagram given by $(n-k,\lambda)$, defining a family
of irreducible representations of $S_{n}$ for each $n\geq\lambda_{1}.$
Given any YD $\mu,$ we denote by $\mu^{*}$ the YD obtained by removing
the first row of boxes (with this notation, $\left(\lambda^{+}(n)\right)^{*}=\lambda$).
Martin and Saleur \cite[Corollary 4.1]{MartinSaleur1992} showed that
the simple modules of $\P$ are parametrized by Young diagrams of
size $\leq k.$
\begin{thm}
When $\P$ is semisimple\footnote{In fact, they subsequently show that this classification holds in
the non--semisimple case.}, with 
\[
\{\lambda\vdash i:i=0,\dots,k\}=\Lambda_{k,n},
\]
the set
\[
\big\{ R^{\lambda}:\ \lambda\in\Lambda_{k,n}\big\}
\]
constitutes a full set of representatives of the isomorphism classes
of simple $\P$--modules. 
\end{thm}

Using double centralizer theory, the following decomposition follows
from Theorem \ref{partitionduality}.
\begin{thm}
\label{partitiondecomposition}For each $k\in\mathbb{Z}_{>0}$, for
every $n\geq2k,$ as a $\left(S_{n},\P\right)$--bimodule, 
\[
\cnk\cong\bigoplus_{\lambda\in\Lambda_{k,n}}V^{\lambda^{+}(n)}\otimes R^{\lambda}.
\]
\end{thm}

\subsection{Schur--Weyl Duality for $S_{n}$ and $S_{k}$\label{sec:SCHUR-WEYL DUALITY FOR Sn AND Sk}}

We now present the refinement of Schur--Weyl--Jones duality due
to Sam and Snowden \cite[Section 6.1.3.]{SamSnowden} which is important
for our construction of the projection $\mathcal{Q}_{\lambda,n}.$
This construction was also considered by Littlewood \cite{Littlewood}
in a somewhat different language to the one used in this paper. 

\subsubsection{Constructing $\protect\A$}

For each $1\leq j\leq k$, define the $j^{\mathrm{th}}$ linear contraction
map
\[
T_{j}:\cnk\to\left(\mathbb{C}^{n}\right)^{\otimes k-1},
\]
where 
\[
T_{j}\left(e_{I}\right)=e_{i_{1}}\otimes\dots\otimes\dot{e}_{i_{j}}\otimes\dots\otimes e_{i_{k}},
\]
using the notation 
\[
e_{i_{1}}\otimes\dots\otimes\dot{e}_{i_{j}}\otimes\dots\otimes e_{i_{k}}\overset{\mathrm{def}}{=}e_{i_{1}}\otimes\dots\otimes e_{i_{j-1}}\otimes e_{i_{j+1}}\otimes\dots\otimes e_{i_{k}}.
\]
We also define 
\[
D_{k}(n)\overset{\mathrm{def}}{=}\dkn\subseteq\cnk.
\]

\begin{defn}
\label{def:definition of Ak(n)}We define a vector subspace 
\[
\A\overset{\mathrm{def}}{=}D_{k}(n)\cap\bigcap_{j=1}^{k}\mathrm{Ker}\left(T_{j}\right)\subseteq\cnk.
\]
\end{defn}

\noindent This space is clearly invariant under the inherited action
of $S_{n}$ and we denote this representation 
\[
\rho:\C\left[S_{n}\right]\to\mathrm{End}\left(\A\right).
\]
Definition \ref{def:definition of Ak(n)} can be reformulated as follows.
Define a function 
\[
\mathrm{pn}:\part\to\mathbb{Z}_{\geq0}
\]
where $\mathrm{pn}(\pi)$ is the \emph{propagating number }(see \cite[Definition 5]{Martin1996})
of $\pi$ -- it is the number of subsets of $\pi$ that contain at
least one element $i\leq k$ \emph{and }at least one element $j$
with $k+1\leq j\leq2k$. Such subsets correspond to connected components
in the diagram of $\pi$ with at least one vertex from each row. Two
obvious properties are:
\begin{itemize}
\item For any $\pi\in\part,$ $0\leq\mathrm{pn}(\pi)\leq k$;
\item If $\mathrm{pn}(\pi)=k$ then $\pi=\iota(\tau)$ for some $\tau\in S_{k}.$
\end{itemize}
Let 
\begin{equation}
I_{k}(n)\overset{\mathrm{def}}{=}\left\langle \pi:\pi\in\part,\ \mathrm{pn}(\pi)\leq k-1\right\rangle _{\C}\subseteq\P\label{eq: I_k(n) definition}
\end{equation}
be the ideal generated by all $\pi\in\part$ with propagating number
$\leq k-1$. So, $I_{k}(n)$ is the kernel of the map $R:\P\to\csk$
(recall (\ref{eq: restricting Pkn to CS_k})) which yields the isomorphism
\[
\csk\cong\P/I_{k}(n).
\]
Then we also have 
\[
\A=\bigcap_{\pi\in I_{k}(n)}\ker\left(P_{\pi}^{\mathrm{weak}}\right),
\]
the subspace of $\cnk$ annihilated by $I_{k}(n).$ The inherited
action of $\P$ on $\A$ thus descends to an action of $\csk$ and
this action permutes tensor coordinates. We denote the associated
representation 
\[
\theta:\csk\to\mathrm{End}\left(\A\right),
\]
i.e. $\theta(\sigma)(w_{1}\otimes\dots\otimes w_{k})=\hat{\theta}\left(\iota\left(\sigma^{-1}\right)\right)(w_{1}\otimes\dots\otimes w_{k})=w_{\sigma^{-1}(1)}\otimes\dots\otimes w_{\sigma^{-1}(k)}$.
Defining the representation 
\[
\Delta:\C\left[S_{n}\times S_{k}\right]\to\mathrm{End}\left(\A\right)
\]
by $\Delta(g,\sigma)\overset{\mathrm{def}}{=}\rho(g)\theta(\sigma)=\theta(\sigma)\rho(g),$
we get a decomposition of $\A$ in to irreducible subrepresentations
\begin{equation}
\A\cong\bigoplus_{\lambda\vdash k}V^{\lambda^{+}(n)}\otimes V^{\lambda}.\label{eq: A_k(n) decomp}
\end{equation}

\section{Projections\label{sec:PROJECTIONS}}

\subsection{Overview of Section \ref{sec:PROJECTIONS}\label{subsec:OVERVIEW OF SECTION 3}}

Sections \ref{SUBsubsec: RESTRICTING FROM Sn TO S2k}--\ref{subsec:CONSTRUCTING W^=00005Cphi,=00005Clambda =00005Cotimes V^=00005Clambda INSIDE CNK}
are devoted to showing that 
\[
\left\langle \Delta(g,\sigma)\left(\normxilambda\right):\ g\in\skdelta,\ \sigma\in S_{k}\right\rangle _{\C}\cong V^{\lambda}\otimes V^{\lambda},
\]
where $\skdelta$ is defined as in \S\ref{subsec:HYPEROCTAHEDRAL GROUP}
and $\normxilambda$ is the scalar multiple of $\xi_{\lambda}$ with
norm $1$. This is done by considering a series of restrictions, determining
the isotypic components that contain $\normxilambda$ and the multiplicity
of the corresponding block in the isotypic component. 

Once we have this, \S\ref{subsec:IDENTIFYING THE PROJECTION} proves
that 
\[
\int_{S_{n}}\rho^{*}(g)\left(\check{\xi}_{\lambda}^{\mathrm{norm}}\right)\otimes\rho(g)\left(\normxilambda\right)dg
\]
 corresponds to a multiple of $\mathcal{Q}_{\lambda,n},$ the orthogonal
projection from $\cnk$ to $\mathcal{U}_{\lambda^{+}(n)}.$ We then
use the Weingarten calculus to compute this explicitly, writing $\mathcal{Q}_{\lambda,n}$
as

\[
\sum_{\pi\in\part}c(n,k,\lambda,\pi)P_{\pi}^{\mathrm{strict}}.
\]

\subsubsection{Dimension of $A_{k}(n)$\label{subsec:DIMENSION OF Ak(n)}}

We digress briefly to find a recursive formula for the dimension of
$A_{k}(n)$, when $n\geq2k+1.$ 
\begin{lem}
\label{lem: XI has non-zero projection to each isotypic subspace}For
any $\lambda\vdash k$ and for any $n\geq2k$, the projection $\xi_{\lambda}$
of $\xi$ to the $V^{\lambda}$--isotypic component of $\A$ is $\neq0$.
\end{lem}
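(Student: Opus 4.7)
The plan is to express $\xi_\lambda$ explicitly using the central idempotent and then use linear independence of the orbit $\{\xi.\sigma : \sigma \in S_k\}$ to see the result is non-zero.

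First, I would recall that since $A_k(n)$ decomposes (as an $S_n \times S_k$ module) as $\bigoplus_{\mu \vdash k} V^{\mu^+(n)} \otimes V^\mu$, the $V^\lambda$-isotypic component under the $S_k$-action is exactly the summand $V^{\lambda^+(n)} \otimes V^\lambda$, and projection onto it is realized by the action of the central idempotent
\[
\mathcal{P}_{V^\lambda} = \frac{d_\lambda}{k!} \sum_{\sigma \in S_k} \chi^\lambda(\sigma)\, \sigma \in \mathbb{C}[S_k]
\]
(using that characters of $S_k$ are real-valued). Pushing this forward via $\theta$ and using $\theta(\sigma)(\xi) = \xi.\sigma^{-1}$, then reindexing $\tau = \sigma^{-1}$ and using $\chi^\lambda(\tau^{-1}) = \chi^\lambda(\tau)$, I would obtain
\[
\xi_\lambda \;=\; \theta(\mathcal{P}_{V^\lambda})(\xi) \;=\; \frac{d_\lambda}{k!} \sum_{\tau \in S_k} \chi^\lambda(\tau)\, \xi.\tau.
\]

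The key step is then to invoke the observation already used in the proof of Proposition \ref{cskinjects}: the vectors $\{\xi.\sigma : \sigma \in S_k\}$ are pairwise orthogonal with respect to the standard Hermitian inner product on $\cnk$, because the factors $v_i = e_{2i-1} - e_{2i}$ have disjoint supports for different $i$. In particular, these vectors are linearly independent. Therefore $\xi_\lambda = 0$ would force $\chi^\lambda(\tau) = 0$ for every $\tau \in S_k$; but the identity gives $\chi^\lambda(\mathrm{Id}) = d_\lambda \geq 1$, a contradiction. Hence $\xi_\lambda \neq 0$.

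There is essentially no obstacle here; the whole argument rests on two ingredients already established in the paper, namely the $S_n \times S_k$ decomposition of $A_k(n)$ from Theorem \ref{akndecompthm} and the orthogonality of the $S_k$-orbit of $\xi$ established inside Proposition \ref{cskinjects}. The only thing to be careful about is to keep track of the fact that $\theta$ realises the right $S_k$-action, so one writes the idempotent in terms of $\tau = \sigma^{-1}$ before comparing coefficients.
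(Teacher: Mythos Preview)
Your proof is correct and follows essentially the same approach as the paper: write $\xi_\lambda$ explicitly via the central idempotent as a linear combination of the vectors $\xi.\sigma$, then use the pairwise orthogonality of $\{\xi.\sigma:\sigma\in S_k\}$ established in Proposition~\ref{cskinjects} together with $\chi^\lambda(\mathrm{Id})=d_\lambda\neq 0$ to conclude the sum is non-zero. Your tracking of the $\theta(\sigma)(\xi)=\xi.\sigma^{-1}$ convention and reindexing is a bit more explicit than the paper, which simply writes $\xi.\mathcal{P}_{V^\lambda}$ directly, but the argument is the same.
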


\begin{proof}
The projection of $\xi$ to the $V^{\lambda}$--isotypic component
of $A_{k}(n)$ is 
\[
\begin{aligned}\xi_{\lambda} & \overset{\mathrm{def}}{=}\theta\left(\p_{V^{\lambda}}\right)\left(\xi\right)\\
 & =\frac{d_{\lambda}}{k!}\sum_{\sigma\in S_{k}}\chi^{\lambda}(\sigma)\left[v_{\sigma(1)}\otimes\dots\otimes v_{\sigma(k)}\right].
\end{aligned}
\]
Observe that, for $\sigma\neq\tau,$ we have 
\[
\left\langle \theta(\sigma)(\xi),\ \theta(\tau)(\xi)\right\rangle =\prod_{i=1}^{k}\left\langle v_{\sigma(i)},\ v_{\tau(i)}\right\rangle =0,
\]
since at least one of these factors must be zero. So, $\{\theta(\sigma)\left(\xi\right):\sigma\in S_{k}\}$
is a linearly independent set of vectors and, as a sum of these vectors
with non--zero coefficients, $\xi_{\lambda}$ is non--zero itself. 
\end{proof}
This yields the proposition below, which details how to construct
a subspace of $\cnk$ that is isomorphic to $V^{\lambda^{+}(n)}\otimes V^{\lambda}.$
\begin{prop}
\label{prop: Constructing a subspace of cnk that is isomorphic to V^=00005Clambda+ =00005Cotimes V^=00005Clambda}When
$n\geq2k,$ for any $\lambda\vdash k$, we have an isomorphism of
representations of $S_{n}\times S_{k},$ 
\[
\mathcal{U}_{\lambda^{+}(n)}\overset{\mathrm{def}}{=}\Big\langle\Delta\left(g,\sigma\right)\left(\xi_{\lambda}\right):\ g\in S_{n},\ \sigma\in S_{k}\Big\rangle_{\C}\cong V^{\lambda^{+}(n)}\otimes V^{\lambda}.
\]
 
\end{prop}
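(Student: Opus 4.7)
The plan is to use the decomposition of $A_k(n)$ from Theorem \ref{akndecompthm} together with the irreducibility of external tensor products of irreducible representations. The key observation is that by Lemma \ref{lem: XI has non-zero projection to each isotypic subspace}, $\xi_\lambda$ is a non-zero element of $A_k(n)$, and by construction of the central idempotent $\mathcal{P}_{V^\lambda}$ (as described in equation (\ref{arbitraryprojectioneqn})), it lies in the $V^\lambda$-isotypic subspace of $A_k(n)$ viewed as an $S_k$-representation under $\theta$.

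First, I would identify under the isomorphism $\mathcal{F}$ of Theorem \ref{akndecompthm} which summand contains $\xi_\lambda$. Because $\mathcal{F}$ is an isomorphism of $S_n \times S_k$-representations, the $V^\lambda$-isotypic subspace for the $S_k$-action in $A_k(n)$ corresponds to the single summand $V^{\lambda^+(n)} \otimes V^\lambda$ (all other summands $V^{\mu^+(n)} \otimes V^\mu$ with $\mu \neq \lambda$ are $V^\mu$-isotypic for the $S_k$-action, hence contribute nothing to the $V^\lambda$-isotypic component). Therefore $\xi_\lambda \in V^{\lambda^+(n)} \otimes V^\lambda$ under $\mathcal{F}$, and it is non-zero by Lemma \ref{lem: XI has non-zero projection to each isotypic subspace}.

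Next, the subspace
\[
W_\lambda := \Big\langle \Delta(g,\sigma)(\xi_\lambda) : g \in S_n,\ \sigma \in S_k \Big\rangle_{\C}
\]
is by definition an $S_n \times S_k$-subrepresentation of $A_k(n)$, and since $\xi_\lambda \in V^{\lambda^+(n)} \otimes V^\lambda$ and this summand is $S_n \times S_k$-invariant, we have $W_\lambda \subseteq V^{\lambda^+(n)} \otimes V^\lambda$. The crucial step is then to invoke the fact that the external tensor product $V^{\lambda^+(n)} \otimes V^\lambda$ is irreducible as a representation of $S_n \times S_k$ (this is a standard fact: the outer tensor product of irreducibles of a direct product of finite groups is irreducible). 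Since $W_\lambda$ is a non-zero subrepresentation of this irreducible representation, we conclude $W_\lambda = V^{\lambda^+(n)} \otimes V^\lambda$.

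No serious obstacle is expected: the entire argument is a clean application of Schur's lemma / irreducibility, once the identification of $\xi_\lambda$ inside the correct isotypic summand is made. The only point requiring slight care is to verify that $\xi_\lambda$ really sits inside the single summand $V^{\lambda^+(n)} \otimes V^\lambda$ rather than overlapping with other isotypics for the $S_k$-action, but this is immediate from the fact that $\mathcal{P}_{V^\lambda}$ is the central idempotent projecting onto the $V^\lambda$-isotypic piece and $\mathcal{F}$ is $S_n \times S_k$-equivariant.
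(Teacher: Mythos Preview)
Your argument is correct and is essentially the same as the paper's: the paper does not write out a separate proof but simply states that the proposition follows from Lemma~\ref{lem: XI has non-zero projection to each isotypic subspace} together with the decomposition of Theorem~\ref{akndecompthm}, which is exactly your reasoning that $\xi_\lambda$ is a non-zero element of the irreducible $S_n\times S_k$-summand $V^{\lambda^+(n)}\otimes V^\lambda$ and hence generates it.
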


Since 
\[
\A=\bigoplus_{\lambda\vdash k}\mathcal{U}_{\lambda^{+}(n)},
\]
then, alongside Lemma \ref{lem: XI has non-zero projection to each isotypic subspace},
it is clear that 
\[
\left\langle \Delta(g,\sigma)\left(\xi\right):\ g\in S_{n},\ \sigma\in S_{k}\right\rangle _{\C}=\A.
\]
Now, consider the subspace $B_{k+1}(n)$ of $\left(\C^{n}\right)^{\otimes k+1}$,
\[
B_{k+1}(n)\overset{\mathrm{def}}{=}D_{k+1}(n)\cap\bigcap_{j=1}^{k}\ker(T_{j})\subseteq\left(\C^{n}\right)^{\otimes k+1}.
\]
Restricting 
\[
T_{k+1}:\left(\C^{n}\right)^{\otimes k+1}\to\cnk
\]
to 
\[
\hat{T}_{k+1}:B_{k+1}\to\cnk,
\]
it is obvious that $\mathrm{Im}\left(\hat{T}_{k+1}\right)\subseteq\A.$
It is also clear that $\A\subseteq\mathrm{Im}\left(\hat{T}_{k+1}\right)$,
since 
\[
\hat{T}_{k+1}\left(\element\otimes e_{2k+1}\right)=\xi.
\]
The inclusion $A_{k+1}(n)\hookrightarrow B_{k+1}(n)$ gives an exact
sequence
\[
0\to A_{k+1}(n)\hookrightarrow B_{k+1}(n)\overset{\hat{T}_{k+1}}{\to}A_{k}(n)\to0,
\]
from which we obtain a recursive formula for the dimension of $A_{k+1}(n)$:
\[
\dim\left(A_{k+1}(n)\right)=\dim\left(B_{k+1}(n)\right)-\dim\left(A_{k}(n)\right).
\]
Now, $B_{k+1}(n)\cong\bigoplus_{i=1}^{n}A_{k}(n-1)\otimes e_{i}$.
To see this, consider the vector space 
\[
A_{k}^{i}(n)=\left\langle e_{i_{1}}\otimes\dots\otimes e_{i_{k}}:\ i_{j}\in\{1,\dots,i-1,i+1,\dots,n\},\ i_{j}\ \mathrm{all\ distict}\right\rangle \cap\bigcap_{j=1}^{k}\ker\left(T_{j}\right).
\]
Then, for each $i=1,\dots,n,$
\[
A_{k}^{i}(n)\cong A_{k}(n-1)
\]
and there is an obvious isomorphism 
\[
\bigoplus_{i=1}^{n}A_{k}^{i}(n)\otimes e_{i}\cong B_{k+1}(n).
\]
From this observation we see that $\mathrm{dim}\left(B_{k+1}(n)\right)=n\mathrm{dim}\left(A_{k}(n-1)\right),$
which yields the formula 
\begin{equation}
\dim A_{k+1}(n)=n\dim A_{k}(n-1)-\dim A_{k}(n).\label{akndimension}
\end{equation}
 The dimensions of $\A$ for $k=0,\dots,10$ are in the table below,
expressed as a polynomial in $n$. 
\begin{center}
\begin{table}[H]
\centering
\noindent\caption{The dimension of $\protect\A$ as a polynomial in $n$ for fixed $k$}
\begin{tabular}{|c|c|}
\hline 
{\footnotesize$k$} & {\footnotesize$\mathrm{dim}\left(\A\right)$}\tabularnewline
\hline 
\hline 
{\footnotesize$1$} & {\footnotesize$n-1$}\tabularnewline
\hline 
{\footnotesize$2$} & {\footnotesize$n^{2}-3n+1$}\tabularnewline
\hline 
{\footnotesize$3$} & {\footnotesize$n^{3}-6n^{2}+8n-1$}\tabularnewline
\hline 
{\footnotesize$4$} & {\footnotesize$n^{4}-9n^{3}+22n^{2}-13n+1$}\tabularnewline
\hline 
{\footnotesize$5$} & {\footnotesize$n^{5}-12n^{4}+43n^{3}-49n^{2}+18n-1$}\tabularnewline
\hline 
{\footnotesize$6$} & {\footnotesize$n^{6}-15n^{5}+71n^{4}-122n^{3}+87n^{2}-23n+1$}\tabularnewline
\hline 
{\footnotesize$7$} & {\footnotesize$n^{7}-18n^{6}+106n^{5}-245n^{4}+265n^{3}-136n^{2}+28n-1$}\tabularnewline
\hline 
{\footnotesize$8$} & {\footnotesize$n^{8}-21n^{7}+148n^{6}-431n^{5}+630n^{4}-491n^{3}+196n^{2}-33n+1$}\tabularnewline
\hline 
{\footnotesize$9$} & {\footnotesize$n^{9}-24n^{8}+197n^{7}-693n^{6}+1281n^{5}-1351n^{4}+819n^{3}-267n^{2}+38n-1$}\tabularnewline
\hline 
{\footnotesize$10$} & {\footnotesize$n^{10}-27n^{9}+253n^{8}-1044n^{7}+2338n^{6}-3122n^{5}+2562n^{4}-1268n^{3}+349n^{2}-43n+1$}\tabularnewline
\hline 
\end{tabular}
\end{table}
\par\end{center}

\subsection{Identifying a Projection Map\label{subsec:IDENTIFYING A PROJECTION MAP}}

We first normalize $\xi_{\lambda}$ -- the norm $||\xi_{\lambda}||=\langle\xi_{\lambda},\xi_{\lambda}\rangle^{\frac{1}{2}}$
is easily computed:

\begin{align*}
\langle\xi_{\lambda},\xi_{\lambda}\rangle & =\left\langle \xilambda,\sum_{\tau\in S_{k}}\frac{d_{\lambda}}{k!}\chi^{\lambda}(\tau)\left[v_{\tau(1)}\otimes\dots\otimes v_{\tau(k)}\right]\right\rangle \\
 & =\left(\frac{d_{\lambda}}{k!}\right)^{2}\sum_{\sigma\in S_{k}}\sum_{\tau\in S_{k}}\chi^{\lambda}(\sigma)\chi^{\lambda}(\tau)\left\langle v_{\sigma(1)}\otimes\dots\otimes v_{\sigma(k)},v_{\tau(1)}\otimes\dots\otimes v_{\tau(k)}\right\rangle \\
 & =\left(\frac{d_{\lambda}}{k!}\right)^{2}\sum_{\sigma\in S_{k}}\chi^{\lambda}(\sigma)^{2}\left\langle v_{\sigma(1)},v_{\sigma(1)}\right\rangle \dots\left\langle v_{\sigma(k)},v_{\sigma(k)}\right\rangle \\
 & =\left(\frac{d_{\lambda}}{k!}\right)^{2}2^{k}\sum_{\sigma\in S_{k}}\chi^{\lambda}(\sigma)^{2}\\
 & =\frac{2^{k}d_{\lambda}^{2}}{k!}.
\end{align*}
We will write 
\begin{eqnarray*}
\normxilambda & \overset{\mathrm{def}}{=}\frac{\xi_{\lambda}}{||\xi_{\lambda}||} & =\left(\frac{k!}{2^{k}d_{\lambda}^{2}}\right)^{\frac{1}{2}}\sum_{\sigma\in S_{k}}\frac{d_{\lambda}}{k!}\chi^{\lambda}(\sigma)\left[v_{\sigma(1)}\otimes\dots\otimes v_{\sigma(k)}\right]\\
 &  & =\left(\frac{1}{2^{k}k!}\right)^{\frac{1}{2}}\sum_{\sigma\in S_{k}}\chi^{\lambda}(\sigma)\left[v_{\sigma(1)}\otimes\dots\otimes v_{\sigma(k)}\right].
\end{eqnarray*}

\subsubsection{Restricting from $S_{n}$ to $S_{2k}$ \label{SUBsubsec: RESTRICTING FROM Sn TO S2k}}

We write the decomposition of $V^{\lambda^{+}(n)}\downarrow_{S_{2k}}$
into irreducible representations of $S_{2k}$ as 
\begin{equation}
V^{\lambda^{+}(n)}\downarrow_{S_{2k}}\cong\bigoplus_{\mu\vdash2k}\left(V^{\mu}\right)^{\oplus c_{\mu}}.\label{eq: restriction Sn to S2k  GENERAL FORM}
\end{equation}

\emph{Pieri's rule} is a `branching rule' for irreducible representations
of the symmetric group, see \cite[p. 59]{FultonHarris} for example.
It asserts that the multiplicity of $V^{\mu}$ in (\ref{eq: restriction Sn to S2k  GENERAL FORM})
is zero, unless the YD $\mu$ is contained inside the YD $\lambda^{+}(n)$.
In this case, the multiplicity is the number of ways of labeling the
skew YD $\lambda^{+}(n)\backslash\mu$ with the numbers $1,\dots,n-2k,$
so that no number is repeated and the numbers are increasing both
along each row and down each column. Using this rule: 
\begin{itemize}
\item if $\mu\vdash2k$ with $\mu_{1}<k$, then $V^{\mu}$ has multiplicity
$0$ in (\ref{eq: restriction Sn to S2k  GENERAL FORM}), since then
$|\mu^{*}|>k$, meaning $\mu^{*}$ is not contained within $\lambda$; 
\item if $\mu\vdash2k$ has $\mu_{1}=k,$ then $V^{\mu}$ has multiplicity
zero, unless $\mu=\lambda^{+}(2k)$ \emph{and} 
\item the multiplicity of $V^{\lambda^{+}(2k)}$ is $1$ -- the skew diagram
$\lambda^{+}(n)\backslash\lambda^{+}(2k)$ is exactly one row of $n-2k$
boxes. 
\end{itemize}
These observations imply the following proposition.
\begin{prop}
\label{prop:isomorphism of S2k x Sk module generated by xi - general partitions }For
any $\lambda\vdash k$ and $n\geq2k$, we have 
\[
\mathcal{U}_{\lambda^{+}(n)}\downarrow_{S_{2k}\times S_{k}}=\mathcal{U}_{\lambda^{+}(2k)}\oplus\mathcal{U}_{\lambda^{+}(2k)}^{\perp},
\]
where 
\begin{equation}
\mathcal{U}_{\lambda^{+}(2k)}\cong V^{\lambda^{+}(2k)}\otimes V^{\lambda}\label{eq: isomorphism of S2k x Sk module general}
\end{equation}
and 
\[
\mathcal{U}_{\lambda^{+}(2k)}^{\perp}\cong\bigoplus_{\mu\vdash2k,\ \mu_{1}>k}\left(V^{\mu}\otimes V^{\lambda}\right)^{\oplus c_{\mu}}.
\]
\end{prop}

\subsubsection{Finding the $S_{2k}\times S_{k}$ representation generated by $\protect\normxilambda$\label{SUBsubsec: PROPERTIES OF XI_=00005Clambda}}

We will use the Gelfand--Tsetlin basis of $\mathcal{U}_{\lambda^{+}(n)}$
to show that $\normxilambda\in\mathcal{U}_{\lambda^{+}(2k)},$ which
leads to Proposition \ref{prop: V^=00005Clambda^+(2k) =00005Cotimes V^=00005Clambda generated by xi}.
We need to show that $\normxilambda$ is orthogonal to the subspace
$\mathcal{U}_{\lambda^{+}(2k)}^{\perp}.$ Consider the subgroup

\[
\mathcal{D}=\big\langle s_{1},\ s_{3},\dots,\ s_{2k-1}\big\rangle\leq S_{2k}\leq S_{n},
\]
which appeared in \S\ref{subsec:HYPEROCTAHEDRAL GROUP} and notice
that, for each $i=1,3,\dots,2k-1$, 
\begin{equation}
\rho(s_{i})\left(\normxilambda\right)=-\normxilambda,\label{eq: D acts by determinant}
\end{equation}
so that 
\[
\Delta\left(s_{i},\mathrm{Id}\right)\left(\normxilambda\right)=-\normxilambda.
\]
We write $\normxilambda$ in the Gelfand--Tsetlin basis: 
\begin{equation}
\normxilambda=\sum_{T_{1},T_{2}}\beta_{T_{1},T_{2}}\left(v_{T_{1}}\otimes v_{T_{2}}\right),\label{eq: normxi lambda basis in young tableau}
\end{equation}
where 
\[
T_{1}\in\mathrm{Tab}\left(\lambda^{+}(2k)\right)\mathrm{\ and}\ T_{2}\in\mathrm{Tab}\left(\lambda\right)
\]
or $v_{T_{1}}\otimes v_{T_{2}}$ represents a Gelfand--Tsetlin basis
vector of one of the subspaces
\[
V^{\mu}\otimes V^{\lambda}
\]
where $\mu\vdash2k$ and $\mu_{1}>k.$ In this case we will have 
\[
T_{1}\in\bigcup_{\mu\vdash2k,\ \mu_{1}>k}\mathrm{Tab}(\mu)
\]
and 
\[
T_{2}\in\mathrm{Tab}(\lambda).
\]

\begin{lem}
\label{lem: coefficient of v_T1 =00005Cotimes v_T2 of shape =00005CMU is zero}For
all $\mu\vdash2k$ with $\mu_{1}>k$ and for any $\tilde{T}_{2}\in\mathrm{Tab}(\lambda),$
if $\tilde{T}_{1}\in\mathrm{Tab}(\mu)$ then, in (\ref{eq: normxi lambda basis in young tableau}),
we have 
\[
\beta_{\tilde{T}_{1},\tilde{T}_{2}}=0.
\]
\end{lem}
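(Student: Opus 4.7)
My plan is to exploit equation~(\ref{eq: generators of D act by -1 on H(=00005Cxi)}) together with the Vershik--Okounkov description of the Gelfand--Tsetlin basis in Proposition~\ref{prop:vershik okounkov basis of V^=00005Clambda} to force $\mathcal{H}(\normxilambda)$ to be orthogonal to every basis vector $v_{\tilde T_1}\otimes v_{\tilde T_2}$ coming from a summand $V^{\mu}\otimes V^{\lambda}$ of~(\ref{eq: isomorphism of S2k x Sk module general}) with $\mu_1>k$.

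The first step is to record that each generator $s_{2i-1}$ of $\mathcal{D}$ acts only on the first tensor factor of a summand $V^{\mu}\otimes V^{\lambda}$, and that it is a self-adjoint involution with respect to the inner product making the Gelfand--Tsetlin basis orthonormal (being a unitary operator of order two). Consequently its $+1$ and $-1$ eigenspaces on $V^{\mu}\otimes V^{\lambda}$ are orthogonal. By~(\ref{eq: generators of D act by -1 on H(=00005Cxi)}), the vector $\mathcal{H}(\normxilambda)$ lies in the common $-1$ eigenspace of $s_1,s_3,\dots,s_{2k-1}$.

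The key input is a pigeonhole observation. Given $\mu\vdash 2k$ with $\mu_1>k$ and any $\tilde T_1\in\mathrm{Tab}(\mu)$, the top row of $\tilde T_1$ contains $\mu_1>k$ distinct labels drawn from the $k$ pairs $\{2j-1,2j\}$ for $j\in[k]$. Since there are only $k$ such pairs but more than $k$ labels in the top row, some pair $\{2j-1,2j\}$ must lie entirely in the top row. For such a $j$, Proposition~\ref{prop:vershik okounkov basis of V^=00005Clambda} yields $s_{2j-1}.v_{\tilde T_1}=v_{\tilde T_1}$, so $v_{\tilde T_1}\otimes v_{\tilde T_2}$ is a $+1$ eigenvector of $s_{2j-1}$.

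Orthogonality of distinct eigenspaces then forces
\[
\beta_{\tilde T_1,\tilde T_2} \;=\; \big\langle \mathcal{H}(\normxilambda),\, v_{\tilde T_1}\otimes v_{\tilde T_2}\big\rangle \;=\; 0,
\]
which is the desired conclusion. The only subtlety I anticipate is that $V^{\mu}$ may appear in~(\ref{eq: isomorphism of S2k x Sk module general}) with multiplicity $c_\mu>1$, but the argument applies uniformly to each copy since it uses only the shape $\mu$ and each copy carries its own orthonormal Gelfand--Tsetlin basis. The true heart of the lemma is the pigeonhole step, which one may interpret as the baby branching-rule fact that no sign-isotypic vector for $\mathcal{D}$ survives in $V^\mu$ whenever $\mu_1>k$.
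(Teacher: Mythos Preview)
Your proof is correct and follows essentially the same approach as the paper: both use the pigeonhole argument to find an odd $i$ with $i,i+1$ in the top row of $\tilde T_1$, observe that $s_i$ then fixes $v_{\tilde T_1}\otimes v_{\tilde T_2}$ while negating $\mathcal{H}(\normxilambda)$, and conclude $\beta_{\tilde T_1,\tilde T_2}=0$. The only difference is cosmetic: where the paper carries out an explicit case analysis (via Proposition~\ref{prop:vershik okounkov basis of V^=00005Clambda}) to check that $\langle (s_i,\mathrm{Id}).\mathcal{H}(\normxilambda),\,v_{\tilde T_1}\otimes v_{\tilde T_2}\rangle=\beta_{\tilde T_1,\tilde T_2}$, you invoke the equivalent general fact that a self-adjoint involution has orthogonal $\pm1$ eigenspaces.
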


\begin{proof}
Fix any $\tilde{T}_{2}\in\mathrm{Tab}(\lambda)$ and consider any
$v_{\tilde{T}_{1}}\otimes v_{\tilde{T}_{2}}$ where 
\[
\tilde{T}_{1}\in\bigcup_{\mu\vdash2k,\ \mu_{1}>k}\mathrm{Tab}(\mu).
\]
 Since $\mu_{1}>k,$ out of each of the $k$ pairs $\{1,2\},\ \{3,4\},\ \dots,\ \{2k-1,2k\},$
there must be \emph{at least one }pair, say $i$ and $i+1$, in which
both elements appear in the first row of boxes of $\tilde{T}_{1}.$
So by Proposition \ref{prop:vershik okounkov basis of V^=00005Clambda},
\begin{equation}
(s_{i},\mathrm{Id})\left(v_{\tilde{T}_{1}}\otimes v_{\tilde{T}_{2}}\right)=v_{\tilde{T}_{1}}\otimes v_{\tilde{T}_{2}}.\label{eq: (si, Id) acts trivially on v_T1 =00005Cotimesv_T2}
\end{equation}
We look at the coefficient of $v_{\tilde{T}_{1}}\otimes v_{\tilde{T}_{2}}$
in 
\[
(s_{i},\mathrm{Id})\sum_{T_{1},T_{2}}\beta_{T_{1},T_{2}}\left(v_{T_{1}}\otimes v_{T_{2}}\right).
\]
If $T_{2}\neq\tilde{T}_{2},$ then, for any choice of $T_{1}$,
\[
\begin{aligned} & \big\langle\left(s_{i},\mathrm{Id}\right)\left(v_{T_{1}}\otimes v_{T_{2}}\right),\ v_{\tilde{T}_{1}}\otimes v_{\tilde{T}_{2}}\big\rangle\\
= & \left\langle \left(s_{i}v_{T_{1}}\right),v_{\tilde{T}_{1}}\right\rangle \underbrace{\left\langle v_{T_{2}},v_{\tilde{T}_{2}}\right\rangle }_{=0}\\
= & \ 0.
\end{aligned}
\]
So now suppose that $T_{2}=\tilde{T}_{2}$ and let 
\[
T_{1}\in\mathrm{Tab}(\lambda^{+}(2k))\cup\bigcup_{\mu\vdash2k,\ \mu_{1}>k}\mathrm{Tab}(\mu),
\]
with $T_{1}\neq\tilde{T}_{1}.$ If $T_{1}$ is not of the same shape
as $\tilde{T}_{1},$ then clearly
\[
\Big\langle s_{i}v_{T_{1}},v_{\tilde{T}_{1}}\Big\rangle=0,
\]
which implies 
\[
\big\langle(s_{i},\mathrm{Id})\left(v_{T_{1}}\otimes v_{\tilde{T}_{2}}\right),\ v_{\tilde{T}_{1}}\otimes v_{\tilde{T}_{2}}\big\rangle=0.
\]
If $T_{1}$ and $\tilde{T}_{1}$ are of the same shape, then there
are $3$ possibilities for the positions of the boxes labeled $i$
and $i+1$ in $T_{1}$: 
\begin{enumerate}
\item If the boxes labeled $i$ and $i+1$ are in the same row of $T_{1},$
then $s_{i}v_{T_{1}}=v_{T_{1}}.$
\item If the boxes labeled $i$ and $i+1$ are in the same column of $T_{1}$
then $s_{i}v_{T_{1}}=-v_{T_{1}}.$
\item If the boxes labeled $i$ and $i+1$ are in neither the same row or
column then we have $s_{i}v_{T_{1}}=\left(r^{-1}v_{T_{1}}\right)+\left(\sqrt{1-r^{-2}}\right)v_{T_{1}'},$
where $r$ and $T_{1}'$ are as defined as in Proposition \ref{prop:vershik okounkov basis of V^=00005Clambda}. 
\end{enumerate}
In any of the above cases, we have 
\[
\Big\langle s_{i}v_{T_{1}},v_{\tilde{T}_{1}}\Big\rangle=0.
\]
This is because $T_{1}\neq\tilde{T}_{1}$ and, in the final case,
we also have $T_{1}'\neq\tilde{T}_{1}$. The above observations imply
\[
\begin{aligned} & \ \left\langle \Delta\left(s_{i},\mathrm{Id}\right)\normxilambda,\ v_{\tilde{T}_{1}}\otimes v_{\tilde{T}_{2}}\right\rangle \\
= & \ \sum_{T_{1},T_{2}}\beta_{T_{1},T_{2}}\left\langle \left(s_{i},\mathrm{Id}\right)\left(v_{T_{1}}\otimes v_{T_{2}}\right),\ v_{\tilde{T}_{1}}\otimes v_{\tilde{T}_{2}}\right\rangle \\
= & \ \beta_{\tilde{T}_{1},\tilde{T}_{2}}\left\langle \left(s_{i},\mathrm{Id}\right)\left(v_{\tilde{T}_{1}}\otimes v_{\tilde{T}_{2}}\right),\ v_{\tilde{T}_{1}}\otimes v_{\tilde{T}_{2}}\right\rangle .
\end{aligned}
\]
Using (\ref{eq: (si, Id) acts trivially on v_T1 =00005Cotimesv_T2}),
this is exactly
\[
\beta_{\tilde{T}_{1},\tilde{T}_{2}}.
\]
 But (\ref{eq: D acts by determinant}) implies that the coefficient
of $v_{\tilde{T}_{1}}\otimes v_{\tilde{T}_{2}}$ in $\Delta(s_{i},\mathrm{Id})\normxilambda$
is $-\beta_{\tilde{T}_{1},\tilde{T}_{2}}$, implying that $\beta_{\tilde{T}_{1},\tilde{T}_{2}}=0.$ 
\end{proof}
\begin{prop}
\label{prop: V^=00005Clambda^+(2k) =00005Cotimes V^=00005Clambda generated by xi}
When $n\geq2k,$ given any $\lambda\vdash k,$ we have 
\[
\Big\langle\Delta(g,\sigma)\left(\normxilambda\right):\ g\in S_{2k},\ \sigma\in S_{k}\Big\rangle_{\C}=\mathcal{U}_{\lambda^{+}(2k)}.
\]
\end{prop}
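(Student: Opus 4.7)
The plan is to combine Lemma \ref{lem: coefficient of v_T1 =00005Cotimes v_T2 of shape =00005CMU is zero} with the irreducibility of $V^{\lambda^{+}(2k)}\otimes V^{\lambda}$ as an $S_{2k}\times S_{k}$-representation. First I would write $\mathcal{H}\left(\normxilambda\right)$ in the Gelfand--Tsetlin basis as in (\ref{eq: normxi lambda basis in young tableau}). The lemma tells us that every coefficient $\beta_{\tilde{T}_{1},\tilde{T}_{2}}$ for which $\tilde{T}_{1}\in\mathrm{Tab}(\mu)$ with $\mu\vdash2k,\ \mu_{1}>k$ must vanish. So the only surviving basis vectors $v_{T_{1}}\otimes v_{T_{2}}$ have $T_{1}\in\mathrm{Tab}\bigl(\lambda^{+}(2k)\bigr)$ and $T_{2}\in\mathrm{Tab}(\lambda)$. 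Consequently,
\[
\mathcal{H}\left(\normxilambda\right)\ \in\ V^{\lambda^{+}(2k)}\otimes V^{\lambda}
\]
sitting inside the decomposition in (\ref{eq: isomorphism of S2k x Sk module general}).

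Next I would observe that this summand is closed under the action of $S_{2k}\times S_{k}$, which means the $S_{2k}\times S_{k}$-subrepresentation generated by $\mathcal{H}\left(\normxilambda\right)$ is contained in $V^{\lambda^{+}(2k)}\otimes V^{\lambda}$. On the other hand, $\normxilambda\neq 0$ by Lemma \ref{lem: XI has non-zero projection to each isotypic subspace} (it is a positive multiple of $\xi_{\lambda}$), so $\mathcal{H}\left(\normxilambda\right)\neq 0$ as well.

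Then I would invoke the general fact that for irreducible representations $U$ of $S_{2k}$ and $V$ of $S_{k}$, the tensor product $U\otimes V$ is irreducible as an $S_{2k}\times S_{k}$-representation (this is standard: the irreducibles of a direct product are tensor products of irreducibles). Applied to $U=V^{\lambda^{+}(2k)}$ and $V=V^{\lambda}$, any nonzero $S_{2k}\times S_{k}$-invariant subspace of $V^{\lambda^{+}(2k)}\otimes V^{\lambda}$ must equal the whole thing. Therefore the subrepresentation generated by $\mathcal{H}\left(\normxilambda\right)$ is exactly $V^{\lambda^{+}(2k)}\otimes V^{\lambda}$. Finally, pulling back along $\mathcal{H}$ (which is an $S_{2k}\times S_{k}$-equivariant isomorphism) yields the stated identification.

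The genuine content of the proposition is entirely in the preceding lemma; the remaining argument is a one-line consequence of irreducibility. The only minor care needed is the observation that $\mathcal{H}\left(\normxilambda\right)\neq 0$, which is immediate from the injectivity of $\mathcal{H}$ and the non-vanishing of $\xi_{\lambda}$, so I do not anticipate any real obstacle in this step.
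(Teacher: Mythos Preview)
Your proposal is correct and matches the paper's own proof essentially line for line: the paper likewise invokes Lemma~\ref{lem: coefficient of v_T1 =00005Cotimes v_T2 of shape =00005CMU is zero} to place $\mathcal{H}\left(\normxilambda\right)$ inside $V^{\lambda^{+}(2k)}\otimes V^{\lambda}$ and then appeals to irreducibility of that summand to conclude. Your additional explicit remark that $\mathcal{H}\left(\normxilambda\right)\neq 0$ is a welcome clarification but not a departure from the paper's argument.
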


\begin{proof}
This follows from Proposition \ref{prop:isomorphism of S2k x Sk module generated by xi - general partitions }
and Lemma \ref{lem: coefficient of v_T1 =00005Cotimes v_T2 of shape =00005CMU is zero},
since Lemma \ref{lem: coefficient of v_T1 =00005Cotimes v_T2 of shape =00005CMU is zero}
implies that 
\[
\normxilambda\in\mathcal{U}_{\lambda^{+}(2k)}.
\]
 
\end{proof}

\subsubsection{Constructing $W^{\emptyset,\lambda}\otimes V^{\lambda}$ inside $\protect\cnk$\label{subsec:CONSTRUCTING W^=00005Cphi,=00005Clambda =00005Cotimes V^=00005Clambda INSIDE CNK}}

We will show that $\normxilambda$ is in the $\left(W^{\emptyset,\lambda}\otimes V^{\lambda}\right)$--isotypic
subrepresentation of $\mathcal{U}_{\lambda^{+}(2k)}\downarrow_{H_{k}\times S_{k}}$
and that this subrepresentation has multiplicity one, so that 
\[
\Big\langle\Delta(g,\sigma)\left(\normxilambda\right):g\in H_{k},\ \sigma\in S_{k}\Big\rangle_{\C}\cong W^{\emptyset,\lambda}\otimes V^{\lambda}.
\]
We write the decomposition 
\begin{equation}
\mathcal{U}_{\lambda^{+}(2k)}\downarrow_{H_{k}\times S_{k}}\cong\bigoplus_{(\mu,\pi)\models2k,\nu\vdash k}\left(W^{\mu,\pi}\otimes V^{\nu}\right)^{\oplus c(\mu,\pi,\nu)}.\label{eq: res V^lambda2k =00005CotimesV^lambda}
\end{equation}
By definition, $\normxilambda$ must be orthogonal to any component
of this decomposition with $\nu\neq\lambda.$ Moreover, by (\ref{eq: D acts by determinant}),
for any generator $s_{i}$ of $\mathcal{D},$ the element $\left(s_{i},\mathrm{Id}\right)\in H_{k}\times S_{k}$
and $\Delta(s_{i},\mathrm{Id})\left(\normxilambda\right)=-\normxilambda.$
The only irreducible representations of $H_{k}\times S_{k}$ for which
this property holds \emph{for every generator $s_{i}$ }are representations
of the form 
\[
W^{\emptyset,\pi}\otimes V^{\nu},
\]
where $\pi\vdash k$ and $\nu\vdash k$. Combining these observations
implies that $\normxilambda$ is orthogonal to any subrepresentation
that is \emph{not} isomorphic to 
\[
\left(W^{\emptyset,\pi}\otimes V^{\lambda}\right)^{\oplus c(\emptyset,\pi,\lambda)},
\]
where $\pi\vdash k$. With the observation that for any $\sigma\in S_{k},$
\[
\theta(\sigma)\left(\normxilambda\right)=\rho\left(\psi\left(\sigma^{-1}\right)\right)\left(\normxilambda\right),
\]
it follows that $\normxilambda$ belongs to the $\left(W^{\emptyset,\lambda}\otimes V^{\lambda}\right)$--isotypic
component in the decomposition. It remains to show that this isotypic
component has multiplicity one. A first attempt would be to use the
branching rule given by Koike and Terada in \cite{KoikeTerada1987}.
\begin{prop}
\label{prop:branching rule koike and terada}Denote by $s_{\mu}$
a Schur polynomial, $f_{i}$ the elementary symmetric polynomial of
degree $i$ and $p_{j}$ the $j^{\mathrm{th}}$complete symmetric
polynomial. Then, given $\pi\vdash2k,$ 
\[
\mathrm{Res}_{H_{k}}^{S_{2k}}V^{\pi}\cong\bigoplus_{(\mu,\nu)\models k}\left(W^{\mu,\nu}\right)^{\oplus d_{\mu,\nu}^{\pi}},
\]
 where the multiplicity $d_{\mu,\nu}^{\pi}$ of each $W^{\mu,\nu}$
coincides exactly with the coefficient of $s_{\pi}$ in the product
$\left(s_{\mu}\circ p_{2}\right)\left(s_{\nu}\circ f_{2}\right)$.
That is, $d_{\mu,\nu}^{\pi}$ satisfies
\[
\left(s_{\mu}\circ p_{2}\right)\left(s_{\nu}\circ f_{2}\right)=\sum_{\pi}d_{\mu,\nu}^{\pi}s_{\pi}.
\]
\end{prop}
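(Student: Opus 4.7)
The plan is to deduce this branching rule via Frobenius reciprocity and the theory of Frobenius characteristics on the ring $\Lambda$ of symmetric functions, where plethysm encodes induction from wreath product subgroups. Since the Frobenius characteristic map $\mathrm{ch}: R(S_{2k}) \to \Lambda^{2k}$ sends $V^\pi$ to $s_\pi$ and is an isometry with respect to the standard inner products, $d^\pi_{\mu,\nu}$ equals the multiplicity of $V^\pi$ in $\mathrm{Ind}^{S_{2k}}_{H_k} W^{\mu,\nu}$, which in turn equals the coefficient of $s_\pi$ in $\mathrm{ch}(\mathrm{Ind}^{S_{2k}}_{H_k} W^{\mu,\nu})$. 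So the task reduces to computing this characteristic and identifying it with the product $(s_\mu \circ p_2)(s_\nu \circ f_2)$.

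First I would invoke the construction of $W^{\mu,\nu}$ from Theorem \ref{thm:construction of irreps for Hk}, namely
\[
W^{\mu,\nu} = \mathrm{Ind}^{H_k}_{H_{|\mu|} \times H'_{|\nu|}} \bigl[ W^{\mu,\emptyset} \boxtimes (W^{\nu,\emptyset} \otimes W^{\emptyset,(|\nu|)}) \bigr],
\]
together with the fact that $H_{|\mu|} \times H'_{|\nu|} \leq S_{2|\mu|} \times S_{2|\nu|} \leq S_{2k}$. Induction in stages then gives
\[
\mathrm{Ind}^{S_{2k}}_{H_k} W^{\mu,\nu} \cong \mathrm{Ind}^{S_{2k}}_{S_{2|\mu|} \times S_{2|\nu|}} \bigl[ \mathrm{Ind}^{S_{2|\mu|}}_{H_{|\mu|}} W^{\mu,\emptyset} \boxtimes \mathrm{Ind}^{S_{2|\nu|}}_{H'_{|\nu|}}(W^{\nu,\emptyset} \otimes W^{\emptyset,(|\nu|)}) \bigr].
\]
By the multiplicativity of $\mathrm{ch}$ under induction from Young subgroups, namely $\mathrm{ch}(\mathrm{Ind}^{S_{a+b}}_{S_a \times S_b} U \boxtimes V) = \mathrm{ch}(U)\, \mathrm{ch}(V)$, the problem factors and it suffices to establish the two plethysm identities
\[
\mathrm{ch}\bigl(\mathrm{Ind}^{S_{2|\mu|}}_{H_{|\mu|}} W^{\mu,\emptyset}\bigr) = s_\mu \circ p_2, \qquad \mathrm{ch}\bigl(\mathrm{Ind}^{S_{2|\nu|}}_{H'_{|\nu|}}(W^{\nu,\emptyset} \otimes W^{\emptyset,(|\nu|)})\bigr) = s_\nu \circ f_2.
\]

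The main obstacle will be proving these two plethysm identities, which is the core combinatorial content of the result. The general principle is: for a wreath product $G \wr S_m$, inducing a representation built from $V^{\boxtimes m}$ (where $V$ is an irrep of $G$) twisted by an $S_m$-irrep $U$ yields a characteristic of plethystic form $\mathrm{ch}(U) \circ \mathrm{ch}(V)$ when $G$ embeds as $S_n \hookrightarrow S_{nm}$. Specialised to $G = S_2$, the trivial representation of $S_2$ has characteristic $p_2$ (the degree two complete symmetric function, in the paper's notation) and the sign representation has characteristic $f_2$ (degree two elementary symmetric). Since $W^{\mu,\emptyset}$ arises from the trivial action of $\mathcal{D} \cong S_2^{\times |\mu|}$ twisted by $V^\mu$ on $\psi(S_{|\mu|})$, and the sign-twisted version comes from sign on $\mathcal{D}$ twisted by $V^\nu$, the two formulas follow. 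Proving these plethystic equalities rigorously requires comparing the Frobenius characters on each conjugacy class of $H_k$, parametrised by pairs of partitions recording the cycle structure of the $\psi(S_k)$ component together with the signs contributed by $\mathcal{D}$, and matching the result term by term with the classical combinatorial expansion of $s_\mu \circ p_2$ and $s_\nu \circ f_2$ on power sums; alternatively one can appeal directly to Macdonald's formulae for characters of wreath products (see Macdonald, \emph{Symmetric Functions and Hall Polynomials}, Chapter I, Appendix B), which package exactly this computation.
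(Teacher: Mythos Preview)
The paper does not give its own proof of this proposition; it is stated as a citation from Koike--Terada \cite{KoikeTerada1987} and used as a black box. So there is nothing to compare your argument against in the paper itself.

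That said, your outline is the standard and correct way to establish this result. Frobenius reciprocity reduces the restriction multiplicity to the coefficient of $s_\pi$ in $\mathrm{ch}\bigl(\mathrm{Ind}^{S_{2k}}_{H_k}W^{\mu,\nu}\bigr)$; induction in stages through $H_{|\mu|}\times H'_{|\nu|}\le S_{2|\mu|}\times S_{2|\nu|}\le S_{2k}$ together with the multiplicativity of $\mathrm{ch}$ factors the problem; and the two remaining identities are instances of the general plethysm formula for induction from a wreath product $S_2\wr S_m\hookrightarrow S_{2m}$, where the trivial (respectively sign) character of $S_2$ contributes $h_2$ (respectively $e_2$) --- in the paper's notation $p_2$ and $f_2$. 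The reference you give to Macdonald's Appendix~B is exactly the right place for the wreath product characteristic computation, so the argument is complete modulo that citation.
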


Following this proposition, we write
\begin{equation}
\mathcal{U}_{\lambda^{+}(2k)}\downarrow_{H_{k}\times S_{k}}\cong\bigoplus_{(\mu,\nu)\models k}\left(W^{\mu,\nu}\otimes V^{\lambda}\right)^{\oplus d_{\mu,\nu}^{\lambda^{+}(2k)}}\label{eq:decomp of restriction V^=00005Clambda+2k =00005CotimesV=00005ClambdageneralHkirreps}
\end{equation}
and the following lemma is immediate from the preceding discussion.
\begin{lem}
\label{prop:H_k x S_k rep generated by xi is W =00005Cotimes V isotypic with multiplicity}With
$d_{\mu,\nu}^{\pi}$ defined as in Proposition \ref{prop:branching rule koike and terada}
, for some $\kappa_{\emptyset,\lambda}^{\lambda^{+}(2k)}$ satisfying
$1\leq\kappa_{\emptyset,\lambda}^{\lambda^{+}(2k)}\leq d_{\emptyset,\lambda}^{\lambda^{+}(2k)},$
\begin{equation}
\Big\langle\Delta(g,\sigma)\left(\normxilambda\right):\ g\in H_{k},\ \sigma\in S_{k}\Big\rangle_{\C}\cong\left(W^{\emptyset,\lambda}\otimes V^{\lambda}\right)^{\oplus\kappa_{\emptyset,\lambda}^{\lambda^{+}(2k)}},\label{eq: decomp of span of Hk x Sk on xi is W =00005CotimesV^=00005Clmabdaisotypicwithmultiplicity}
\end{equation}
 as representations of $H_{k}\times S_{k}.$
\end{lem}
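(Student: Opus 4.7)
The plan is to leverage the substantive argument in the paragraphs immediately preceding the lemma and reduce everything to a straightforward multiplicity count. That earlier discussion already showed that, under (\ref{eq: decomp of restriction from S2k x Sk to Hk x SK GENERAL}), the element $\normxilambda$ lies in the single isotypic summand $\left(W^{\emptyset,\lambda}\otimes V^{\lambda}\right)^{\oplus d_{\emptyset,\lambda}^{\lambda^{+}(2k)}}$: the sign property $\rho(s_{i})(\normxilambda) = -\normxilambda$ from (\ref{eq: D acts by determinant}) combined with Remark \ref{rem:character of irreducible Hk rep} forces the first partition to be $\emptyset$, while the intertwining relation $\theta(\sigma)(\normxilambda) = \rho(\psi(\sigma^{-1}))(\normxilambda)$ (a direct consequence of $\xi.\sigma = \psi(\sigma).\xi$, together with the fact that $\normxilambda$ is already pinned down to the $V^{\lambda}$-isotypic component on the $S_{k}$ side) forces the second partition to be $\lambda$. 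So the only thing left to prove is the claim about the multiplicity $\kappa_{\emptyset,\lambda}^{\lambda^{+}(2k)}$.

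For the upper bound $\kappa_{\emptyset,\lambda}^{\lambda^{+}(2k)} \leq d_{\emptyset,\lambda}^{\lambda^{+}(2k)}$ I would use a general fact: any $H_{k}\times S_{k}$-subrepresentation of an isotypic representation $N^{\oplus d}$, with $N$ irreducible, is itself isotypic of type $N$ with multiplicity at most $d$. This follows by projecting any such subrepresentation onto each of the $d$ copies of $N$ and applying Schur's lemma, so that every irreducible constituent must be isomorphic to $N$, and the images of the $d$ projections span a quotient of dimension at most $d\cdot\dim(N)$. Applied to the subrepresentation $\big\langle \Delta(g,\sigma)(\normxilambda) : g\in H_{k},\ \sigma \in S_{k}\big\rangle_{\C}$ inside $\left(W^{\emptyset,\lambda}\otimes V^{\lambda}\right)^{\oplus d_{\emptyset,\lambda}^{\lambda^{+}(2k)}}$, this gives exactly the required bound.

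The lower bound $\kappa_{\emptyset,\lambda}^{\lambda^{+}(2k)} \geq 1$ is immediate: by Lemma \ref{lem: XI has non-zero projection to each isotypic subspace} and the fact that $\normxilambda$ is a non-zero scalar multiple of $\xi_{\lambda}$, we have $\normxilambda \neq 0$, so the $H_{k}\times S_{k}$-subrepresentation it generates is non-zero and therefore contains at least one copy of $W^{\emptyset,\lambda}\otimes V^{\lambda}$. Since the essential structural work has already been done, I do not anticipate any substantive obstacle; the only care needed is in the bookkeeping, to ensure consistency between the $S_{2k}\times S_{k}$-isomorphism $\mathcal{H}$ of Proposition \ref{prop:isomorphism of S2k x Sk module generated by xi - general partitions } and the subsequent restriction to $H_{k}\times S_{k}$ via (\ref{eq:decomp of restriction V^=00005Clambda+2k =00005CotimesV=00005ClambdageneralHkirreps}), so that $\mathcal{H}(\normxilambda)$ and $\normxilambda$ really do land in matching isotypic pieces.
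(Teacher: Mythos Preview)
Your proposal is correct and follows essentially the same approach as the paper: the paper's proof simply notes that the preceding observations place $\normxilambda$ in the $\left(W^{\emptyset,\lambda}\otimes V^{\lambda}\right)$--isotypic subspace, so the $H_{k}\times S_{k}$ representation it generates is a non--zero subrepresentation of $\left(W^{\emptyset,\lambda}\otimes V^{\lambda}\right)^{\oplus d_{\emptyset,\lambda}^{\lambda^{+}(2k)}}$. Your version just unpacks this into explicit upper and lower bounds on the multiplicity, which is exactly what the paper's one-line argument is encoding.
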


The aim is to prove the following proposition.
\begin{prop}
\label{prop:multiplicity of W =00005Cphi,=00005Clambda is exactly one}The
multiplicity $d_{\emptyset,\lambda}^{\lambda^{+}(2k)}$ of $W^{\emptyset,\lambda}$
in the restriction of $V^{\lambda^{+}(2k)}$ from $S_{2k}$ to $H_{k}$
is exactly $1.$
\end{prop}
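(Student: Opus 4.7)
The plan is to identify $d_{\emptyset,\lambda}^{\lambda^+(2k)}$ with a plethysm coefficient and then extract it via a leading $x$-monomial analysis. First, I would apply Proposition \ref{prop:branching rule koike and terada} with $\mu = \emptyset$. Since $s_\emptyset \circ p_2 = 1$, this identifies
\[
d_{\emptyset,\lambda}^{\lambda^+(2k)} \;=\; \bigl[s_{\lambda^+(2k)}\bigr]\, s_\lambda[e_2],
\]
where $e_2 = f_2$ is the elementary symmetric polynomial of degree two. The problem thus reduces to showing this Schur coefficient is exactly one.

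Next, I would interpret $s_\lambda[e_2]$ through its $x$-monomial expansion: substituting the monomial $x_i x_j$ (for $i<j$) for each letter of the alphabet of $s_\lambda$, and ordering these pairs lexicographically, one obtains
\[
s_\lambda[e_2] \;=\; \sum_{T}\,\prod_{\square\in\lambda} x_{i(\square)}\, x_{j(\square)},
\]
where the sum is over SSYT $T$ of shape $\lambda$ whose entries are pairs $(i,j)$ with $i<j$.

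The main step is then to compute the two $x$-monomial coefficients $[x^{\lambda^+(2k)}]\,s_\lambda[e_2] = 1$ and $[x^{\mu}]\,s_\lambda[e_2] = 0$ for every $\mu\succ\lambda^+(2k)$ in dominance order. For $\mu = \lambda^+(2k) = (k,\lambda_1,\lambda_2,\ldots)$, requiring $\mu_1 = k$ forces every pair filling a cell of $\lambda$ to begin with $1$ (each cell contributes at most one $x_1$, and there are exactly $k$ cells), so the filling is determined by the SSYT of second coordinates, which after shifting labels down by one must have content $\lambda$; the unique such SSYT is the one with $i$'s in row $i$, giving coefficient $1$. For $\mu \succ \lambda^+(2k)$, either $\mu_1 > k$ (impossible), or $\mu_1 = k$, in which case $(\mu_2, \mu_3, \ldots) \succ \lambda$ strictly, so the Kostka number $K_{\lambda, (\mu_2, \mu_3, \ldots)}$ vanishes.

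Finally, I would conclude by the unitriangularity of the Schur-to-monomial transition. Writing $s_\lambda[e_2] = \sum_\mu c_\mu s_\mu$, a descending induction in dominance order starting from the above vanishing yields $c_\mu = 0$ for all $\mu \succ \lambda^+(2k)$, and hence $c_{\lambda^+(2k)} = [x^{\lambda^+(2k)}]\,s_\lambda[e_2] = 1$. The main obstacle is the combinatorial bookkeeping: verifying carefully the ``SSYT-by-pairs'' description of the plethysm under lexicographic order on pairs, and confirming the dominance comparison $(\mu_2,\mu_3,\ldots) \succ \lambda$. Once these are in place, the leading-term / Kostka-unitriangularity machinery runs without further difficulty.
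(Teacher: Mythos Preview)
Your argument is correct and takes a genuinely different route from the paper. The paper explicitly remarks that the plethysm $(s_\emptyset\circ p_2)(s_\lambda\circ f_2)=s_\lambda[e_2]$ ``was not straightforward'' to compute, and instead proves the proposition by a constructive representation-theoretic argument: it shows that the $W^{\emptyset,\lambda}$-isotypic part of $\mathrm{Res}_{H_k}^{S_{2k}}V^{\lambda^+(2k)}$ is contained in the sign-isotypic subspace $V_{\mathcal D,\mathrm{sign}}^{\lambda^+(2k)}$ for $\mathcal D$, decomposes that subspace via the Gelfand--Tsetlin basis into orthogonal pieces $Z_T$ indexed by $T\in\mathrm{Tab}(\lambda)$, and proves each $Z_T$ contributes a one-dimensional sign-isotypic space, forcing $\dim V_{\mathcal D,\mathrm{sign}}^{\lambda^+(2k)}=d_\lambda$ and hence multiplicity one.

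Your approach bypasses all of this with a clean dominance/leading-monomial argument in symmetric functions: interpreting $s_\lambda[e_2]$ via SSYT on the pair alphabet, pinning down $[x^{\lambda^+(2k)}]s_\lambda[e_2]=1$ and $[x^\mu]s_\lambda[e_2]=0$ for $\mu\succ\lambda^+(2k)$, and then invoking Schur--monomial unitriangularity. This is shorter and more elementary for the bare multiplicity statement. The paper's longer route, on the other hand, produces additional structural information (an explicit identification $V_{\mathcal D,\mathrm{sign}}^{\lambda^+(2k)}\cong W^{\emptyset,\lambda}$ built out of GT basis vectors), though that extra information is not strictly needed for the rest of the argument. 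One phrasing to tighten: when you write ``$\mu_1>k$ (impossible)'' you mean the monomial coefficient vanishes in that case, not that such $\mu$ cannot occur; the reason you give (each of the $k$ cells contributes at most one $x_1$) is the right one.
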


We do not know how to use Proposition \ref{prop:branching rule koike and terada}
effectively, since, in general, it is difficult to compute 
\[
\left(s_{\emptyset}\circ p_{2}\right)\left(s_{\lambda}\circ f_{2}\right).
\]
Using that $s_{\emptyset}$ is the constant function and $f_{2}=s_{(1,1)},$
our task reduces to showing that the coefficient of $s_{\lambda^{+}(2k)}$
in the Schur polynomial expansion of 
\begin{equation}
s_{\lambda}\circ s_{(1,1)}\label{eq: schur version of the restriction of s2k to hk}
\end{equation}
is indeed one$.$ This plethysm can be evaluated in the special cases
whereby $|\lambda|\leq3,$ see for example \cite[Theorem 5.3]{Colmenarejo+}
and in some other special cases of $\lambda\vdash k$ (for example,
when $\lambda=(k)$ or $\lambda=(1,\dots,1)$, see \cite[Section 5.3]{Colmenarejo+}).
To our knowledge, there is no simple expression for (\ref{eq: schur version of the restriction of s2k to hk})
for every $\lambda,$ and even the task at hand of evaluating just
one specific coefficient in the Schur expansion does not appear to
have an obvious straightforward approach. 

Instead, we will construct the $W^{\emptyset,\lambda}$--isotypic
subspace of $V^{\lambda^{+}(2k)}\downarrow_{H_{k}}$ using the Gelfand--Tsetlin
basis and, in doing so, we will see that the multiplicity must be
one. Recall that the character of $W^{\emptyset,\lambda}$ is $\chi^{\lambda}\chi_{\rho_{0}},$
so that the generators of $\mathcal{D}$ act on $W^{\emptyset,\lambda}$
by multiplying elements by $-1$. Which is to say that the $W^{\emptyset,\lambda}$--isotypic
subspace of $V^{\lambda^{+}(2k)}\downarrow_{H_{k}}$ must be contained
in the sign--isotypic component of $\mathcal{D}$ in the vector space
$V^{\lambda^{+}(2k)}$. Denote by
\[
\signisotypic\overset{\mathrm{def}}{=}\mathrm{sign-isotypic\ component\ of}\ \mathcal{D}\mathrm{\ in}\ V^{\lambda^{+}(2k)},
\]
so that
\begin{equation}
\left(W^{\emptyset,\lambda}\right)^{d_{\emptyset,\lambda}^{\lambda^{+}(2k)}}\subseteq\signisotypic.\label{eq: W^=00005Cphi,=00005Clambdaiscontainedinsidesignisotypiccomp}
\end{equation}
Our method is as follows:
\begin{enumerate}
\item We will show that $\signisotypic$ is a $H_{k}$--submodule of $V^{\lambda^{+}(2k)}\downarrow_{H_{k}}$;
\item Then we will construct $\signisotypic$ using the Gelfand--Tsetlin
basis of $V^{\lambda^{+}(2k)}$;
\item In doing the above step, we will see that $\signisotypic$ has dimension
$d_{\lambda},$ so that (\ref{eq: W^=00005Cphi,=00005Clambdaiscontainedinsidesignisotypiccomp})
implies that $\signisotypic=W^{\emptyset,\lambda}$ and that $d_{\emptyset,\lambda}^{\lambda^{+}(2k)}=1.$
\end{enumerate}
Define
\begin{equation}
\begin{aligned} & \mathrm{Tab}_{\mathcal{\mathcal{D}},\mathrm{sign}}\left(\lambda^{+}(2k)\right)\\
\overset{\mathrm{def}}{=} & \Big\{ T\in\mathrm{Tab}\left(\lambda^{+}(2k)\right)\ : & T\mathrm{\ has\ exactly\ one\ representative\ from\ each\ of\ \{1,2\},}\\
 &  & \mathrm{\{3,4\},\dots,\ \{2k-1,2k\}\ in\ the\ first\ row\ of\ boxes\Big\}}
\end{aligned}
\label{eq: definition of Tab_D subset of vectors}
\end{equation}
and the associated subspace
\[
\left\langle v_{\hat{T}}:\ \hat{T}\in\tablambda\right\rangle \subseteq V^{\lambda^{+}(2k)}.
\]

\begin{prop}
\label{prop:sign isotypic component of D on V^=00005Clambda+2k is contained in TabD}
For any $\lambda\vdash k,$
\[
\signisotypic\subseteq\left\langle v_{\hat{T}}:\ \hat{T}\in\tablambda\right\rangle .
\]
\end{prop}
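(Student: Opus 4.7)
The approach is to take an arbitrary vector $v \in \signisotypic$, expand it in the Gelfand--Tsetlin basis as
\[
v \;=\; \sum_{T \in \mathrm{Tab}(\lambda^{+}(2k))} \alpha_{T}\, v_{T},
\]
and show that $\alpha_{T} = 0$ whenever $T \notin \tablambda$. Since the Gelfand--Tsetlin basis is orthonormal and each $s_{i}$ acts on $V^{\lambda^{+}(2k)}$ as a self-adjoint operator (the matrix in Proposition \ref{prop:vershik okounkov basis of V^=00005Clambda} is symmetric), we have $\alpha_{T} = \langle v, v_{T}\rangle$ and $\langle s_{i} v, v_{T}\rangle = \langle v, s_{i} v_{T}\rangle$.

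The key step exploits the first case of Proposition \ref{prop:vershik okounkov basis of V^=00005Clambda}: whenever the boxes labelled $i$ and $i+1$ in $T$ lie in the same row, $s_{i}.v_{T} = v_{T}$. Now $v \in \signisotypic$ means $s_{i} v = -v$ for every odd $i \in \{1, 3, \dots, 2k-1\}$. Thus, if $i$ is such an odd index and the boxes labelled $i$ and $i+1$ lie in the same row of $T$, the computation
\[
-\alpha_{T} \;=\; \langle s_{i} v, v_{T}\rangle \;=\; \langle v, s_{i} v_{T}\rangle \;=\; \alpha_{T}
\]
forces $\alpha_{T} = 0$. Consequently, any $T$ for which $\alpha_{T} \neq 0$ has the property that for every $j \in \{1,\dots,k\}$, the two elements of the pair $\{2j-1, 2j\}$ lie in distinct rows of $T$.

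The final step is a short pigeonhole argument to upgrade this to the defining property of $\tablambda$. For each $j$, let $n_{j}$ denote the number of elements of $\{2j-1, 2j\}$ lying in the first row of $T$. The constraint just derived rules out $n_{j} = 2$ (both would then be in row $1$, hence in the same row), so $n_{j} \in \{0, 1\}$. Since the first row of $\lambda^{+}(2k)$ contains exactly $k$ boxes, we have $\sum_{j=1}^{k} n_{j} = k$; combined with $n_{j} \in \{0, 1\}$, this forces $n_{j} = 1$ for every $j$, which is precisely the definition of $\tablambda$. Hence $v$ lies in $\langle v_{\hat{T}} : \hat{T} \in \tablambda\rangle$, proving the inclusion. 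There is no serious obstacle here; the one point worth highlighting is the counting step at the end, which is what prevents a pair from ``hiding'' in two distinct non-first rows.
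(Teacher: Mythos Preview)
Your proof is correct and follows essentially the same strategy as the paper's: both arguments show that if some odd-indexed pair $\{i,i+1\}$ lies in the same row of $T$, then $s_i v_T = v_T$ together with $s_i v = -v$ forces the coefficient $\alpha_T$ to vanish, and then a pigeonhole count on the $k$ boxes of the first row pins down the condition defining $\tablambda$. Your use of the self-adjointness of $s_i$ to compute $\langle s_i v, v_T\rangle = \langle v, s_i v_T\rangle$ is a slightly cleaner packaging of what the paper does by explicitly checking that $\langle s_i v_{\hat{T}_1}, v_{\hat{T}}\rangle = 0$ for all $\hat{T}_1 \neq \hat{T}$, and you make the pigeonhole step explicit where the paper leaves it implicit, but the substance is identical.
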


\begin{proof}
Let 
\[
\hat{T}\in\mathrm{Tab}\left(\lambda^{+}(2k)\right)\backslash\tablambda
\]
 and suppose, towards a contradiction, that $u\in\signisotypic$ is
such that 
\[
\left\langle u,v_{\hat{T}}\right\rangle =\alpha\neq0.
\]
Since $\lambda^{+}(2k)$ has $k$ boxes in the top row and $\hat{T}\notin\tablambda,$
there must be some $i\in\{1,\ 3,\dots,\ 2k-1\}$ for which the boxes
labeled $i$ and $i+1$ are both in the top row. Then $s_{i}=(i\ i+1)$
acts trivially on $v_{\hat{T}}$ by Proposition \ref{prop:vershik okounkov basis of V^=00005Clambda}.
Using the same reasoning as Lemma \ref{lem: coefficient of v_T1 =00005Cotimes v_T2 of shape =00005CMU is zero},
if $\hat{T}_{1}\neq\hat{T},$ then $\left\langle s_{i}v_{\hat{T}_{1}},v_{\hat{T}}\right\rangle =0$.
So then 
\[
\begin{aligned}\left\langle s_{i}u,v_{\hat{T}}\right\rangle  & =\left\langle u,v_{\hat{T}}\right\rangle \left\langle s_{i}v_{\hat{T}},v_{\hat{T}}\right\rangle \\
 & =\left\langle u,v_{\hat{T}}\right\rangle \\
 & =\alpha.
\end{aligned}
\]
However, since $u\in\signisotypic$ and $s_{i}\in\mathcal{D},$ we
must have 
\[
s_{i}u=-u,
\]
which obviously implies that 
\[
\left\langle s_{i}u,v_{\hat{T}}\right\rangle =-\alpha,
\]
contradicting the fact that $\alpha\neq0.$ 
\end{proof}
\begin{prop}
\label{prop:Tsign isotypic com is a H_k submodule}For each $\lambda\vdash k$,
the space $\signisotypic\subset V^{\lambda^{+}(2k)}$ is a $H_{k}$--submodule.
\end{prop}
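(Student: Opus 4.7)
The plan is to exploit the semidirect product structure $H_k = \psi(S_k) \ltimes \mathcal{D}$ together with a standard fact about isotypic components for normal subgroups. Specifically, the key observation is that $\mathcal{D}$ is a \emph{normal} subgroup of $H_k$, and the sign character of $\mathcal{D}$ (the character by which $\mathcal{D}$ acts on the subspace $\signisotypic$, sending each generator $s_1, s_3, \dots, s_{2k-1}$ to $-1$) is invariant under conjugation by $\psi(S_k)$. Once this is verified, the $\mathcal{D}$-sign-isotypic subspace of any $H_k$-module is automatically an $H_k$-submodule.

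First I would verify normality by checking directly that conjugation by generators of $\psi(S_k)$ preserves $\mathcal{D}$ and in fact permutes the generating set $\{s_1, s_3, \dots, s_{2k-1}\}$. Recall $\psi(s_i) = ([2i{-}1][2i{+}1])(2i[2i{+}2])$, so it swaps the block $\{2i-1, 2i\}$ with the block $\{2i+1, 2i+2\}$; hence conjugation by $\psi(s_i)$ sends $s_{2i-1}$ to $s_{2i+1}$ (and fixes all other odd-indexed generators). Since the $\psi(s_i)$ generate $\psi(S_k)$, this shows that for every $h \in \psi(S_k)$ and every generator $s_j$ of $\mathcal{D}$, the conjugate $h^{-1} s_j h$ is again one of the generators $s_{j'}$ of $\mathcal{D}$.

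Next I would take $u \in \signisotypic$ and an arbitrary $h = \psi(\sigma)\delta \in H_k$ with $\sigma \in S_k$ and $\delta \in \mathcal{D}$, and check that $h.u \in \signisotypic$. Since $\mathcal{D}$ is abelian, $\delta.u$ is a scalar multiple of $u$, so $\delta.u$ is still in $\signisotypic$; without loss of generality we reduce to the case $h = \psi(\sigma)$. For any generator $s_j$ of $\mathcal{D}$, write
\[
s_j . (\psi(\sigma).u) = \psi(\sigma) . \bigl(\psi(\sigma)^{-1} s_j \psi(\sigma)\bigr) . u = \psi(\sigma) . (s_{j'} . u) = \psi(\sigma).(-u) = -\psi(\sigma).u,
\]
where $s_{j'} = \psi(\sigma)^{-1} s_j \psi(\sigma)$ by the previous paragraph. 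Thus $\psi(\sigma).u$ is again killed by $s_j + 1$ for every generator $s_j$ of $\mathcal{D}$, i.e.\ lies in $\signisotypic$.

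There is no real obstacle here: the argument is essentially the general observation that for a normal subgroup $N \trianglelefteq G$ and a $G$-stable character $\chi$ of $N$, the $\chi$-isotypic subspace of any $G$-module is $G$-invariant. The only substantive step is the explicit verification, via the formula for $\psi$, that conjugation by $\psi(S_k)$ permutes the generating transpositions of $\mathcal{D}$, which is immediate from the definition of $\psi$.
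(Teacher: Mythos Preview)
Your proof is correct and follows essentially the same approach as the paper: both arguments exploit the normality $\mathcal{D}\trianglelefteq H_k$ and the conjugation identity $\delta.(h.u)=h.(h^{-1}\delta h).u$ to show that $h.u$ is still in the sign-isotypic subspace. The paper's version is slightly more compressed, invoking $\mathrm{sign}(h^{-1}\delta h)=\mathrm{sign}(\delta)$ directly rather than your explicit check that conjugation by $\psi(S_k)$ permutes the generators $s_1,s_3,\dots,s_{2k-1}$, but the substance is the same.
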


\begin{proof}
Let $h\in H_{k},$ $u\in\signisotypic$ and $\delta\in\D.$ Since
$\mathcal{D}$ is normal in $H_{k},$ we have 
\[
\begin{aligned}\delta(hu) & =hh^{-1}\delta(hu)\\
 & =h(h^{-1}\delta h)u\\
 & =\mathrm{sign}\left(h^{-1}\delta h\right)(hu)\\
 & =\mathrm{sign}(\delta)(hu),
\end{aligned}
\]
so that $hu\in\signisotypic.$ 
\end{proof}
For any $\mu$ of any given size, say $\mu\vdash l$, we define an
injective map 
\[
\Psi:V^{\mu}\to V^{\mu^{+}(2l)},
\]
where $\Psi(v_{T})$ corresponds to the Young tableau of shape $\mu^{+}(2l)$,
in which the labels of the boxes in the positions of $\mu^{+}(2l)^{*}$
are double the corresponding label in $T,$ and the top row of boxes
contains the labels (in order) $1,3,\dots,2l-1.$ By a slight abuse
of notation, we will label this Young tableau by $\Psi(T)$, so that
$\Psi\left(v_{T}\right)=v_{\Psi(T)}$. For example, with $\mu=(2,1),$

\begin{figure}[H]
\centering
\begin{ytableau} 1&2 \cr 3 \end{ytableau}$\ \overset{\Psi}{\mapsto}\ $\begin{ytableau} 1&3&5 \cr 2&4 \cr 6 \end{ytableau}
and \begin{ytableau} 1&3 \cr 2 \end{ytableau}$\ \overset{\Psi}{\mapsto}\ $\begin{ytableau} 1&3&5 \cr 2&6 \cr 4 \end{ytableau}

\end{figure}

For each $\lambda\vdash k$ and for each $T\in\mathrm{Tab}(\lambda),$
define 
\begin{equation}
Z_{T}\overset{\mathrm{def}}{=}\left\langle \D\Psi\left(v_{T}\right)\right\rangle \subseteq V^{\lambda^{+}(2k)}.\label{eq: definition of Z_T}
\end{equation}
We must establish some extra notation for the remainder of this section.
For a given Young tableau $T$ of any shape, if the boxes labeled
$i$ and $i+1$ are in different rows and columns, then we will now
denote by $T'_{\{i,i+1\}}$ the Young tableau of the same shape obtained
by swapping the labels $i$ and $i+1.$ If we also have $j\neq i,i+1$
and $j+1\neq i,i+1$, with the boxes labeled $j$ and $j+1$ in different
rows and columns, then we denote by $T''_{\{i,i+1\},\{j,j+1\}}$ the
Young tableau of the same shape obtained by swapping the labels $i$
and $i+1$ and then the labels $j$ and $j+1.$ The Young tableau
\[
T_{\{i_{1},i_{1}+1\},\dots,\{i_{m},i_{m}+1\}}^{(m)}
\]
is defined in the obvious way. If $T$ is a Young tableau of shape
$\mu\vdash l,$ we will write $T\backslash\{l\}$ for the Young tableau
of shape $\mu'\vdash\left(l-1\right)$, the YD obtained from $\mu$
by removing the box labeled by $l$ in $T$ and keeping all other
boxes and labels the same. We define $T\backslash\{l,l-1,\dots,l-j\}$
inductively in the obvious way. 
\begin{lem}
\label{lem: Z_T are orthogonal}Let $\lambda\vdash k$ and $T,\dot{T}\in\mathrm{Tab}(\lambda).$
Then, if $T\neq\dot{T},$ $Z_{T}\perp Z_{\dot{T}}.$
\end{lem}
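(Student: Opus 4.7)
The plan is to show that $Z_T$ and $Z_{\dot T}$ are spanned by disjoint subsets of the Gelfand--Tsetlin basis of $V^{\lambda^+(2k)}$, from which orthogonality follows immediately since that basis is orthonormal.

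First I would set up the description of the $\mathcal{D}$-orbit. Since the generators $s_1, s_3, \ldots, s_{2k-1}$ of $\mathcal{D}$ have disjoint supports, they commute and each squares to the identity, so $\mathcal{D} \cong (\mathbb{Z}/2)^k$ and every element is of the form $\prod_{i\in S}s_i$ for some $S\subseteq\{1,3,\ldots,2k-1\}$. In $\Psi(T)$, the label $2l-1$ sits at position $(1,l)$ in the top row, while the label $2l$ sits at position $(i_l+1,j_l)$, where $(i_l,j_l)$ is the position of $l$ in $T$. In particular $2l-1$ and $2l$ are never in the same row of $\Psi(T)$, so by Proposition \ref{prop:vershik okounkov basis of V^=00005Clambda} the generator $s_{2l-1}$ acts on $v_{\Psi(T)}$ either as $-1$ (if they lie in the same column) or as a $2\times 2$ rotation in the span of $v_{\Psi(T)}$ and $v_{\Psi(T)'_{\{2l-1,2l\}}}$. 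Iterating through the commuting generators, one sees that $Z_T$ is contained in the span of Gelfand--Tsetlin basis vectors $v_{\tilde T}$ where $\tilde T$ ranges over tableaux obtained from $\Psi(T)$ by swapping the labels in some subset of the pairs $\{2l-1,2l\}$ for which the swap produces a valid standard tableau.

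Next I would extract the structural invariant: for any such $\tilde T$, the unordered pair of positions occupied by the labels $\{2l-1, 2l\}$ is exactly $\{(1,l),\,(i_l+1,j_l)\}$. Thus from $\tilde T$ one can read off, for each $l$, the non-top-row position $(i_l+1,j_l)$, which is precisely the position of $l$ in $T$ shifted down by one row. Hence $\tilde T$ determines $T$ completely. Applying this to the symmetric statement for $\dot T$: if $\tilde T$ appeared in both $Z_T$ and $Z_{\dot T}$, then for every $l\in[k]$ the position of $l$ in $T$ would equal the position of $l$ in $\dot T$, forcing $T=\dot T$ and contradicting the assumption.

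The only real subtlety is correctly handling the degenerate case where $2l-1$ and $2l$ lie in the same column of $\Psi(T)$, since then no new tableau is produced and Vershik--Okounkov just contributes a sign. This is actually benign for the argument: it makes $Z_T$ smaller, but the characterization of the positions of $\{2l-1,2l\}$ in every vector $v_{\tilde T}$ appearing in $Z_T$ still holds, so the injectivity $\tilde T \mapsto T$ is unaffected. Once the disjointness of tableau supports is established, orthogonality of $Z_T$ and $Z_{\dot T}$ is immediate.
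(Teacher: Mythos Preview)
Your proof is correct and follows essentially the same route as the paper's: both arguments hinge on the observation that, for every Gelfand--Tsetlin basis vector $v_{\tilde T}$ appearing in $Z_T$, the unordered pair of boxes carrying labels $\{2l-1,2l\}$ coincides with their positions in $\Psi(T)$, and this data recovers $T$. The paper phrases this by contradiction, picking one box of $\lambda$ where $T$ and $\dot T$ differ and noting the incompatible label constraints, whereas you give the cleaner global statement that the tableau supports of $Z_T$ and $Z_{\dot T}$ are disjoint.
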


\begin{proof}
By Proposition \ref{prop:vershik okounkov basis of V^=00005Clambda},
any $v\in Z_{T}$ must be written as a linear combination of $v_{\tilde{T}}$
such that, if a box $\square\in\lambda$ has label $x\in[k]$ in $T$,
then the corresponding box $\tilde{\square}\in\left(\lambda^{+}(2k)\right)^{*}$
has label either $2x$ or $2x-1$ in $\tilde{T}.$ The same is true
for any element $u\in Z_{\dot{T}}.$ Since $T\neq\dot{T,}$ there
is a box $\square\in\lambda$ labeled $x$ in $T$ and $y$ in $\dot{T}$
with $x\neq y.$ So any $v\in Z_{T}$ is written as a linear combination
of $v_{\tilde{T}}$ as described and then, since $x\neq y,$ we cannot
have $v\in Z_{\dot{T}}.$
\end{proof}
\begin{lem}
\label{lem: Tab_D,sign is contained in Z_T span}For every $\lambda\vdash k$,
\[
\left\langle v_{\hat{T}}:\ \hat{T}\in\tablambda\right\rangle \subseteq\bigoplus_{T\in\mathrm{Tab}(\lambda)}Z_{T}.
\]
\end{lem}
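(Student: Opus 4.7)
The plan is to associate to each $\hat{T}\in\tablambda$ a canonical $T(\hat{T})\in\mathrm{Tab}(\lambda)$ and show that $v_{\hat{T}}\in Z_{T(\hat{T})}$, from which the containment follows immediately by summing over $\hat{T}$.

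First I would define $T(\hat{T})$ by a relabeling: since the top row of $\hat{T}$ contains exactly one element from each pair $\{2j-1,2j\}$, the bottom part of $\hat{T}$ (a shifted copy of $\lambda$) contains the other element from each pair, and $T=T(\hat{T})$ is obtained by replacing each label $x\in\{2j-1,2j\}$ in the bottom part by $j$. That this is a valid standard tableau is immediate from the fact that any two labels $x<y$ in the bottom part of $\hat{T}$ come from different pairs, forcing $\lceil x/2\rceil<\lceil y/2\rceil$. Writing $I=\{j\in[k]:2j\text{ appears in the top row of }\hat{T}\}$ and
\[
S_{T}\overset{\mathrm{def}}{=}\big\{j\in[k]:\text{boxes }2j-1,\,2j\text{ in }\Psi(T)\text{ lie in different rows and different columns}\big\},
\]
one checks directly that $\hat{T}$ is the tableau obtained from $\Psi(T)$ by transposing $2j-1\leftrightarrow 2j$ for each $j\in I$, and that $I\subseteq S_{T}$ (otherwise the swap would place $2j$ directly above $2j-1$ in some column, violating standardness).

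Next I would prove by induction on $|I|$ that $v_{\hat{T}}\in Z_{T}$. The base case $I=\emptyset$ is trivial, since $v_{\hat{T}}=v_{\Psi(T)}\in Z_{T}$. For the inductive step, pick any $j_{0}\in I$ and let $\hat{T}'$ denote the tableau attached to $I\setminus\{j_{0}\}$, so $v_{\hat{T}'}\in Z_{T}$ by hypothesis. In $\hat{T}'$ the boxes labeled $2j_{0}-1$ and $2j_{0}$ occupy the same positions as in $\Psi(T)$, and since $j_{0}\in S_{T}$ they lie in different rows and columns; Proposition \ref{prop:vershik okounkov basis of V^=00005Clambda} then yields
\[
s_{2j_{0}-1}.v_{\hat{T}'}=r^{-1}v_{\hat{T}'}+\sqrt{1-r^{-2}}\,v_{\hat{T}},
\]
where the content difference satisfies $|r|\geq 2$: indeed $r=0$ would force the two boxes to lie on the same diagonal, which is ruled out in a standard tableau by considering the intermediate box sharing a row with one and a column with the other, while $|r|=1$ would mean same row or same column. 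Hence $\sqrt{1-r^{-2}}\neq 0$ and we may solve for $v_{\hat{T}}\in Z_{T}$. Summing over $\hat{T}\in\tablambda$ and invoking Lemma \ref{lem: Z_T are orthogonal} completes the argument.

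The main obstacle is the verification in the first step that simultaneously transposing the pairs in any $I\subseteq S_{T}$ really does produce a standard tableau of shape $\lambda^{+}(2k)$. Beyond the within-pair column constraint already noted, one must verify the column constraint between the top row of $\lambda^{+}(2k)$ and the $\lambda$-part; this reduces to showing $j<T(1,j)$ whenever $j\in S_{T}$, which follows because $T(1,j)\geq j$ always, and $T(1,j)=j$ would force $T^{-1}(j)=(1,j)$, placing the two pair boxes in $\Psi(T)$ in the same column $j$ and hence $j\notin S_{T}$.
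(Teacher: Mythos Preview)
Your proof is correct and follows essentially the same approach as the paper: both induct on the number of even labels in the top row of $\hat{T}$ (your $|I|$ is precisely the paper's $q_{E,\hat{T}}$), both reduce the inductive step to a single application of Proposition~\ref{prop:vershik okounkov basis of V^=00005Clambda}, and both use that $\sqrt{1-r^{-2}}\neq 0$ to solve for $v_{\hat{T}}$. The only substantive difference is organizational: the paper always removes the \emph{smallest} even label in the inductive step, which makes the standardness of the intermediate tableau $\tilde{T}$ almost immediate, whereas you allow removal of an arbitrary $j_{0}\in I$ and therefore have to verify (as you do in your final paragraph) that swapping any subset $J\subseteq S_{T}$ of pairs in $\Psi(T)$ yields a standard tableau. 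Your upfront definition of $T(\hat{T})$ via $x\mapsto\lceil x/2\rceil$ is a nice touch that makes the target $Z_{T}$ explicit from the outset, whereas the paper discovers $T^{*}$ only at the end of the induction.
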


\begin{proof}
We will prove this (for each basis vector of the LHS) by induction
on the number of \emph{even }elements in the top row of $\hat{T}\in\tablambda.$
Denote by 
\[
q_{E,\hat{T}}
\]
the number of boxes with an even label in the top row of any Young
tableau $\hat{T}\in\mathrm{Tab}\left(\lambda^{+}(2k)\right)$ and
suppose that 
\[
\hat{T}\in\tablambda
\]
is such that $q_{E,\hat{T}}=0.$ Then, in $\hat{T}$, all of the boxes
in $\left(\lambda^{+}(2k)\right)^{*}$ \emph{must }have an even label.
Let $T^{*}\in\mathrm{Tab}(\lambda)$ be such that
\[
\Psi\left(v_{T^{*}}\right)=v_{\hat{T}},
\]
which implies that 
\[
v_{\hat{T}}\in Z_{T^{*}}\subseteq\bigoplus_{T\in\mathrm{Tab}(\lambda)}Z_{T}.
\]

Now suppose that 
\[
\hat{T}\in\tablambda
\]
 is such that $q_{E,\hat{T}}>0$ and that, for any $T\in\tablambda$
with $0\leq q_{E,T}<q_{E,\hat{T}},$ we have $v_{T}\in\bigoplus_{T\in\mathrm{Tab}(\lambda)}Z_{T}.$
Suppose that the first \emph{even} label in the top row of boxes of
$\hat{T}$ is $j$ and let $\tilde{T}=\hat{T}'_{\{j-1,j\}}.$ Then
we also have 
\[
\tilde{T}\in\tablambda
\]
so that
\[
v_{\tilde{T}}\in\left\langle v_{\hat{T}}:\ \hat{T}\in\tablambda\right\rangle .
\]
Moreover, $q_{E,\tilde{T}}=q_{E,\hat{T}}-1,$ so, by the inductive
hypothesis, $v_{\tilde{T}}\in\bigoplus_{T\in\mathrm{Tab}(\lambda)}Z_{T}.$ 

Since the $Z_{T}$ are pairwise orthogonal, there is exactly one $T^{*}\in\mathrm{Tab}(\lambda)$
such that $v_{\tilde{T}}\in Z_{T^{*}}.$ By Proposition \ref{prop:vershik okounkov basis of V^=00005Clambda},
we have 
\[
s_{j-1}v_{\tilde{T}}=\left(r^{-1}\right)v_{\tilde{T}}+\left(\sqrt{1-r^{-2}}\right)v_{\tilde{T}'_{\{j-1,j\}}},
\]
 where $r$ is as defined in the proposition. This shows that
\[
\left(r^{-1}\right)v_{\tilde{T}}+\left(\sqrt{1-r^{-2}}\right)v_{\tilde{T}'_{\{j-1,j]}}\in Z_{T^{*}},
\]
which implies that 
\[
v_{\tilde{T}'_{\{j-1,j]}}\in Z_{T^{*}}.
\]
But $\tilde{T}'_{\{j-1,j\}}=\hat{T}$, so we have $v_{\hat{T}}\in Z_{T^{*}}\subseteq\bigoplus_{T\in\mathrm{Tab}(\lambda)}Z_{T}.$ 
\end{proof}
\begin{lem}
\label{lem:sign isotypic subspace of Z_Ts is same as that of V^=00005Clambda2k}For
any $\lambda\vdash k$, the sign--isotypic component of $\mathcal{D}$
in $\bigoplus_{T\in\mathrm{Tab}(\lambda)}Z_{T}$ is exactly $\signisotypic.$
\end{lem}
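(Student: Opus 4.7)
The plan is to combine Proposition \ref{prop:sign isotypic component of D on V^=00005Clambda+2k is contained in TabD} and Lemma \ref{lem: Tab_D,sign is contained in Z_T span} with the observation that each $Z_T$ is itself $\mathcal{D}$-invariant, so that the chain of inclusions $\signisotypic \subseteq \langle v_{\hat{T}} : \hat{T} \in \tablambda \rangle \subseteq \bigoplus_{T \in \mathrm{Tab}(\lambda)} Z_T$ automatically places the sign-isotypic vectors of $V^{\lambda^{+}(2k)}$ inside $\bigoplus_T Z_T$, where they remain sign-isotypic.

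First, I would verify that $\bigoplus_{T \in \mathrm{Tab}(\lambda)} Z_{T}$ is a $\mathcal{D}$-submodule of $V^{\lambda^{+}(2k)}$. This is immediate from the definition $Z_{T} = \langle \mathcal{D} . \Psi(v_T) \rangle$ in (\ref{eq: definition of Z_T}): for any $\delta \in \mathcal{D}$ we have $\delta \cdot Z_T = \langle \delta \mathcal{D} . \Psi(v_T)\rangle = \langle \mathcal{D} . \Psi(v_T)\rangle = Z_T$, so each summand is $\mathcal{D}$-stable and hence so is their direct sum.

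For the inclusion $\signisotypic \subseteq (\bigoplus_T Z_T)_{\mathcal{D},\mathrm{sign}}$, I would chain together the two already-established inclusions: by Proposition \ref{prop:sign isotypic component of D on V^=00005Clambda+2k is contained in TabD}, $\signisotypic \subseteq \langle v_{\hat{T}} : \hat{T} \in \tablambda \rangle$, and by Lemma \ref{lem: Tab_D,sign is contained in Z_T span} the latter sits inside $\bigoplus_T Z_T$. Any $u \in \signisotypic$ satisfies $\delta \cdot u = \mathrm{sgn}(\delta) u$ for every $\delta \in \mathcal{D}$ by definition, so under these inclusions $u$ lies in $\bigoplus_T Z_T$ and still transforms by the sign character of $\mathcal{D}$, placing it in the sign-isotypic component of $\bigoplus_T Z_T$.

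The reverse inclusion is essentially trivial: if $w \in \bigoplus_T Z_T$ satisfies $\delta \cdot w = \mathrm{sgn}(\delta) w$ for every $\delta \in \mathcal{D}$, then since $\bigoplus_T Z_T \subseteq V^{\lambda^{+}(2k)}$, the vector $w$ is a sign-isotypic vector for $\mathcal{D}$ inside $V^{\lambda^{+}(2k)}$, so $w \in \signisotypic$. Combining the two inclusions yields the claimed equality. No serious obstacle is expected at this point; the real work has already been done in Proposition \ref{prop:sign isotypic component of D on V^=00005Clambda+2k is contained in TabD}, Lemma \ref{lem: Z_T are orthogonal} and Lemma \ref{lem: Tab_D,sign is contained in Z_T span}, and this lemma is a clean assembly of those results together with the elementary $\mathcal{D}$-invariance of $\bigoplus_T Z_T$.
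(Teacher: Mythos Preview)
Your proposal is correct and follows essentially the same argument as the paper's own proof: both directions come from the chain $\signisotypic \subseteq \langle v_{\hat T}:\hat T\in\tablambda\rangle \subseteq \bigoplus_T Z_T \subseteq V^{\lambda^{+}(2k)}$, invoking Proposition~\ref{prop:sign isotypic component of D on V^=00005Clambda+2k is contained in TabD} and Lemma~\ref{lem: Tab_D,sign is contained in Z_T span} for the forward inclusion and the trivial containment for the reverse. Your explicit verification that each $Z_T$ is $\mathcal{D}$-stable is a small addition the paper leaves implicit, but otherwise the two proofs are the same.
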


\begin{proof}
Since $\bigoplus_{T\in\mathrm{Tab}(\lambda)}Z_{T}\subseteq V^{\lambda^{+}(2k)},$
the sign--isotypic component of $\mathcal{D}$ in 
\[
\bigoplus_{T\in\mathrm{Tab}(\lambda)}Z_{T}
\]
 is obviously contained in $\signisotypic.$ On the other hand, by
Proposition \ref{prop:sign isotypic component of D on V^=00005Clambda+2k is contained in TabD},
$\signisotypic$ must be contained in the sign--isotypic component
of $\mathcal{D}$ in 
\[
\left\langle v_{T}:\ T\in\tablambda\right\rangle ,
\]
 which, by the previous lemma, must be contained inside the sign--isotypic
component of $\mathcal{D}$ in $\bigoplus_{T\in\mathrm{Tab}(\lambda)}Z_{T}.$
\end{proof}
For each $T\in\mathrm{Tab}(\lambda),$ we denote by $Z_{T}^{\mathcal{D},\mathrm{sign}}$
the sign--isotypic subspace of $\mathcal{D}$ in $Z_{T}.$
\begin{prop}
For any $\lambda\vdash k$ and for any $T\in\mathrm{Tab}(\lambda)$,
$Z_{T}^{\mathcal{D},\mathrm{sign}}$ is one dimensional. \label{prop:Z_T has a one dimensional subspace on which D acts by sign}
\end{prop}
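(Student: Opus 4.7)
The plan is to analyse $Z_T$ as a representation of the abelian group $\mathcal{D} \cong (\mathbb{Z}/2\mathbb{Z})^k$ and extract the multiplicity of the sign character via cyclicity plus a dimension count. Let $J \subseteq \{1,\dots,k\}$ be the set of indices $j$ for which the boxes labeled $2j-1$ and $2j$ in $\Psi(T)$ lie in different rows and different columns; for $j \notin J$, the label $2j-1$ is pinned to the top row of $\Psi(T)$, so $2j-1$ and $2j$ must share a column. By Proposition \ref{prop:vershik okounkov basis of V^=00005Clambda}, $s_{2j-1}$ acts on $v_{\Psi(T)}$ as $-1$ for $j \notin J$, and for $j \in J$ sends $v_{\Psi(T)}$ to a non-trivial linear combination of $v_{\Psi(T)}$ and $v_{\Psi(T)'_{\{2j-1,2j\}}}$ (the axial distance $r_j$ between $2j-1$ and $2j$ satisfies $|r_j| \geq 2$ in a standard tableau whenever the labels occupy different rows and different columns, so $\sqrt{1-r_j^{-2}} \neq 0$).

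First I would show that $\{v_{\Psi(T)_S} : S \subseteq J\}$ is an orthonormal basis of $Z_T$, giving $\dim Z_T = 2^{|J|}$. Each $\Psi(T)_S$ is a valid standard Young tableau (the classical fact that swapping two labels in different rows and columns of a standard tableau gives a standard tableau, combined with the observation that the swaps for different $j \in J$ involve disjoint labels); the corresponding Gelfand--Tsetlin vectors lie in $Z_T$ because iteratively applying the $s_{2j-1}$ with $j \in J$ to $v_{\Psi(T)}$ produces each $v_{\Psi(T)_S}$ up to a non-zero scalar; and $\mathcal{D}$ manifestly preserves the span of this set, so no additional basis vectors appear.

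Next I would apply abelian representation theory. Since $Z_T$ is cyclic over $\mathbb{C}[\mathcal{D}]$ by construction and $\mathcal{D}$ is abelian, $Z_T$ is a quotient of $\mathbb{C}[\mathcal{D}] = \bigoplus_{\chi} \mathbb{C}_{\chi}$, so every character of $\mathcal{D}$ appears in $Z_T$ with multiplicity at most one. A character $\chi$ can appear in $Z_T$ only if $\chi(s_{2j-1}) = -1$ for every $j \notin J$, because those $s_{2j-1}$ act as $-1$ on all of $Z_T$. There are exactly $2^{|J|}$ such characters; comparing with $\dim Z_T = 2^{|J|}$ forces each of them to appear with multiplicity precisely one. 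The full sign character of $\mathcal{D}$ is one of them, so its isotypic subspace in $Z_T$ is exactly one-dimensional.

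The main obstacle I anticipate is the careful verification that $\dim Z_T = 2^{|J|}$: checking that all $2^{|J|}$ tableaux $\Psi(T)_S$ are distinct standard Young tableaux, that their Gelfand--Tsetlin vectors genuinely lie in $Z_T$, and that the $\mathcal{D}$-action does not spill into other Gelfand--Tsetlin vectors outside their span. Once this dimension identity is established, the remainder is a clean counting argument exploiting that cyclic modules over an abelian group are multiplicity-free.
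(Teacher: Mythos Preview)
Your argument is correct and takes a genuinely different route from the paper's. The paper proceeds inductively: it constructs an explicit $\mathcal{D}'_{\{2k-1,2k\}}$-module homomorphism $\Phi^\lambda_{2k,2k-1}:V^{\lambda^+(2k)}\to\bigoplus_\mu V^\mu$ (deleting the boxes labeled $2k-1,2k$), checks by hand that it is injective on the $-1$ eigenspace of $s_{2k-1}$ in $Z_T$, and identifies the image with $Z'_{T,\{k\}}=\langle\mathcal{D}'_{\{2k-1,2k\}}\cdot\Psi(v_{T\setminus\{k\}})\rangle$; iterating $k$ times reduces to a one-dimensional space. Your approach instead computes $\dim Z_T=2^{|J|}$ directly via the explicit Gelfand--Tsetlin basis $\{v_{\Psi(T)_S}:S\subseteq J\}$, then uses the purely representation-theoretic fact that a cyclic module over $\mathbb{C}[\mathcal{D}]$ is multiplicity-free, together with the constraint $\chi(s_{2j-1})=-1$ for $j\notin J$, to pin down exactly which $2^{|J|}$ characters occur. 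The paper's method is more constructive (it essentially writes down the sign-isotypic vector via repeated eigenspace restrictions), while yours is shorter and more conceptual once the dimension of $Z_T$ is established; the cyclicity-implies-multiplicity-free step replaces the paper's $k$-fold iteration in one stroke. Both rely on the same underlying combinatorics of $\Psi(T)$, but organise it differently.
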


To prove Proposition \ref{prop:Z_T has a one dimensional subspace on which D acts by sign}
we introduce some additional notation and prove three intermediate
lemmas. For each $\lambda\vdash k$ and for each $T\in\mathrm{Tab}\left(\lambda\right)$,
for each $i\in\{1,\dots,k-1\},$ define 
\[
Z_{T}^{(i)}\overset{\mathrm{def}}{=}\left\langle \D^{(i)}\Psi\left(v_{T\backslash\{k,\dots,k-i+1\}}\right)\right\rangle ,
\]
where 
\[
\D^{(i)}\overset{\mathrm{def}}{=}\left\langle s_{1},\dots,s_{2k-2i-1}\right\rangle .
\]
Define 
\[
\D_{\perp}^{(i)}\overset{\mathrm{def}}{=}\left\langle s_{2k-2i+1},\dots,s_{2k-1}\right\rangle 
\]
and also 
\[
\mathrm{Tab}\left(Z_{T}^{(i)}\right)=\Big\{\hat{T}\in\mathrm{Tab}(\mu):\mu\vdash(2k-2i),\ \left\langle z,v_{\hat{T}}\right\rangle \neq0\ \mathrm{for\ some}\ z\in Z_{T}^{(i)}\Big\}.
\]
 Then, for each $i\in\{1,\dots,k\},$ we introduce a map 
\[
\Phi_{T,i}^{\lambda}:Z_{T}^{(i-1)}\to\bigoplus_{\mu\vdash2k-2i}V^{\mu}
\]
whereby 
\[
\Phi_{T,i}^{\lambda}\left(v_{\hat{T}}\right)=v_{\hat{T}\backslash\{2k-2i+2,2k-2i+1\}}
\]
for any $\hat{T}\in\mathrm{Tab}\left(Z_{T}^{(i)}\right)$ and $Z_{T}^{(0)}$
is understood to be $Z_{T}.$ 
\begin{lem}
\label{lem: PHI is homomorphis of module}For any $\lambda\vdash k,$
$T\in\mathrm{Tab}(\lambda)$ and for any $i\in\{1,\dots,k-1\},$ the
map $\Phi_{T,i}^{\lambda}$ is a $\D^{(i)}$--module homomorphism.
\end{lem}

\begin{proof}
To prove this lemma, we need to show that $\Phi_{T,i}^{\lambda}\left(s_{j}v_{\hat{T}}\right)=s_{j}\Phi_{T,i}^{\lambda}\left(v_{\hat{T}}\right)$
for any $\hat{T}\in\mathrm{Tab}\left(Z_{T}^{(i-1)}\right)$ and for
any $j\in\{1,3,\dots,2k-2i-1\}.$ To this end, fix any such $\hat{T}$
and $j.$ 

There are three possibilities for the positions of the boxes labeled
$j$ and $j+1$ in $\hat{T}.$ If they are in the same row/same column/neither
the same row or column, then this is the exact same relationship between
the boxes labeled $j$ and $j+1$ in $\hat{T}\backslash\{2k-2i+2,2k-2i+1\}.$
In the first case we have 
\[
\Phi_{T,i}^{\lambda}\left(s_{j}v_{\hat{T}}\right)=\Phi_{T,i}^{\lambda}\left(v_{\hat{T}}\right)=s_{j}\Phi_{T,i}^{\lambda}\left(v_{\hat{T}}\right)
\]
and, in the second case, 
\[
\Phi_{T,i}^{\lambda}\left(s_{j}v_{\hat{T}}\right)=\Phi_{T,i}^{\lambda}\left(-v_{\hat{T}}\right)=-\Phi_{T,i}^{\lambda}\left(v_{\hat{T}}\right)=s_{j}\Phi_{T,i}^{\lambda}\left(v_{\hat{T}}\right).
\]
In the case where these two boxes are in neither the same row or column,
the contents of the boxes labeled $j$ and $j+1$ are always the same
in both $\hat{T}$ and $\hat{T}\backslash\{2k-2i,2k-2i-1\}.$ Also
in this case, 
\[
\Phi_{T,i}^{\lambda}\left(v_{\hat{T}'_{\{j,j+1\}}}\right)=v_{\left(\hat{T}\backslash\{2k-2i+2,2k-2i+1\}\right)'_{\{j,j+1\}}}.
\]
So then we have 
\[
\begin{aligned}\Phi_{T,i}^{\lambda}\Big(s_{j}v_{\hat{T}}\Big) & =\Phi_{T,i}^{\lambda}\Big(\left(r^{-1}\right)v_{\hat{T}}+\left(\sqrt{1-r^{-2}}\right)v_{\hat{T}'_{\{j,j+1\}}}\Big)\\
 & =\left(r^{-1}\right)\Phi_{T,i}^{\lambda}\left(v_{\hat{T}}\right)+\left(\sqrt{1-r^{-2}}\right)\Phi_{T,i}^{\lambda}\left(v_{\hat{T}'_{\{j,j+1\}}}\right)\\
 & =\left(r^{-1}\right)v_{\hat{T}\backslash\{2k-2i+2,2k-2i+1\}}+\left(\sqrt{1-r^{-2}}\right)v_{\left(\hat{T}\backslash\{2k-2i+2,2k-2i+1\}\right)'_{\{j,j+1\}}}\\
 & =s_{j}v_{\hat{T}\backslash\{2k-2i+2,2k-2i+1\}}\\
 & =s_{j}\Phi_{T,i}^{\lambda}\left(v_{\hat{T}}\right).
\end{aligned}
\]
 
\end{proof}
The next lemma asserts that $\Phi_{T,i}^{\lambda}$ is injective on
the $-1$ eigenspace of $s_{2k-2i+1}$ in $Z_{T}^{(i-1)},$ so that
$\Phi_{T,i}^{\lambda}$ always defines a $\D^{(i)}$--module isomorphism
between this eigenspace and its image.
\begin{lem}
\label{lem:PHI is injective on eigenspace}For any $\lambda\vdash k,$
$T\in\mathrm{Tab}(\lambda)$ and for any $i\in\{1,\dots,k\},$ the
map $\Phi_{T,i}^{\lambda}$ is injective on the $-1$ eigenspace of
$s_{2k-2i+1}$ in $Z_{T}^{(i-1)}.$ 
\end{lem}

\begin{proof}
It is easy to see that 
\[
\ker\left(\Phi_{T,i}^{\lambda}\right)=\bigoplus_{\hat{T}}\left\langle v_{\hat{T}}-v_{\hat{T}'_{\{2k-2i+2,2k-2i+1\}}}\right\rangle ,
\]
where the direct sum is over all $\hat{T}\in\mathrm{Tab}\left(Z_{T}^{(i-1)}\right)$
for which $\hat{T}'_{\{2k-2i+2,2k-2i+1\}}$ is defined (i.e. still
a valid Young tableaux). The $-1$ eigenspace of $s_{2k-2i+1}$ in
$Z_{T}^{(i-1)}$ is exactly 
\[
\bigoplus_{\hat{T}_{1}}\left\langle v_{\hat{T}_{1}}\right\rangle \oplus\bigoplus_{\hat{T}}\left\langle v_{\hat{T}}-\sqrt{\frac{1+r^{-1}}{1-r^{-1}}}v_{\hat{T}'_{\{2k-2i+2,2k-2i+1\}}}\right\rangle ,
\]
where the first direct sum is over all $\hat{T}_{1}\in\mathrm{Tab}\left(Z_{T}^{(i-1)}\right)$
for which $2k-2i+1$ and $2k-2i+2$ are in the same column and the
second direct sum is over all $\hat{T}\in\mathrm{Tab}\left(Z_{T}^{(i-1)}\right)$
for which $2k-2i+1$ and $2k-2i+2$ are in neither the same row or
the same column (the case where they are both in the same row is not
possible, since this cannot be the case in $\Psi(T)$ and in $\hat{T},$
the boxes labeled $2k-2i+1$ and $2k-2i+2$ are either in the same
boxes as in $\Psi(T)$ or they have swapped with one another). 

Clearly, this does not intersect $\ker\left(\Phi_{T,i}^{\lambda}\right)$
other than at $0$, so $\Phi_{T,i}^{\lambda}$ is injective when restricted
to the $-1$ eigenspace of $s_{2k-2i+1}$ in $Z_{T}^{(i-1)}.$ 
\end{proof}
The final intermediate lemma determines the image of $\Phi_{T,i}^{\lambda}$
when restricted to this eigenspace. 
\begin{lem}
\label{lem:image of PHI}The image of the $-1$ eigenspace of $s_{2k-2i+1}$
in $Z_{T}^{(i-1)}$ under $\Phi_{T,i}^{\lambda}$ is $Z_{T}^{(i)}.$
\end{lem}

\begin{proof}
We have 
\[
\begin{aligned} & \Phi_{T,i}^{\lambda}\left(\bigoplus_{\hat{T}_{1}}\left\langle v_{\hat{T}_{1}}\right\rangle \oplus\bigoplus_{\hat{T}}\left\langle v_{\hat{T}}-\sqrt{\frac{1+r^{-1}}{1-r^{-1}}}v_{\hat{T}'_{\{2k-2i+2,2k-2i+1\}}}\right\rangle \right)\\
= & \ \left\langle v_{\hat{T}\backslash\{2k-2i+2,2k-2i+1\}}:\ \hat{T}\in\mathrm{Tab}\left(Z_{T}^{(i-1)}\right)\right\rangle \\
= & \ \left\langle \D^{(i)}\Psi\left(v_{T\backslash\{k,\dots,k-i+1\}}\right)\right\rangle \\
= & \ Z_{T}^{(i)}.
\end{aligned}
\]
\end{proof}
\begin{proof}[Proof of Proposition \ref{prop:Z_T has a one dimensional subspace on which D acts by sign}]
 We can apply the previous three lemmas iteratively to determine
$Z_{T}^{\mathcal{D},\mathrm{sign}}.$ Applying them for $i=1$ shows
that the $-1$ eigenspace of $s_{2k-1}$ in $Z_{T}$ is isomorphic
to $Z_{T}^{(1)}.$ Repeating this for $i=2$ shows that the $-1$
eigenspace of $s_{2k-3}$ in $Z_{T}^{(1)}$ is isomorphic$Z_{T}^{(2)}.$
But the $-1$ eigenspace of $s_{2k-3}$ in $Z_{T}^{(1)}$ is the sign--isotypic
subspace of $\D_{\perp}^{(2)}$ in $Z_{T}.$ Repeating for each $j=3,,\dots,j=k-1$
shows that the sign--isotypic subspace of 
\[
\D_{\perp}^{(k-1)}=\left\langle s_{3},\dots,s_{2k-1}\right\rangle 
\]
 in $Z_{T}$ is isomorphic to 
\[
Z_{T}^{(k-1)}=\left\langle s_{1}\Psi\left(v_{T\backslash\{k,\dots,2\}}\right)\right\rangle .
\]
Thus, the sign--isotypic subspace of $\mathcal{D}$ in $Z_{T}$,
$Z_{T}^{\mathcal{D},\mathrm{sign}},$ is just the $-1$ eigenspace
of $s_{1}$ in $Z_{T}^{(k-1)}$. But $Z_{T}^{(k-1)}$ is clearly the
one--dimensional sign representation of $\left\langle s_{1}\right\rangle ,$
so the $-1$ eigenspace of $s_{1}$ is just the whole space and we
conclude that $Z_{T}^{\mathcal{D},\mathrm{sign}}$ has dimension one. 
\end{proof}
By Lemma \ref{lem:sign isotypic subspace of Z_Ts is same as that of V^=00005Clambda2k},
\[
\bigoplus_{T\in\mathrm{Tab}(\lambda)}Z_{T}^{\mathcal{D},\mathrm{sign}}=\signisotypic,
\]
so that Proposition \ref{prop:Z_T has a one dimensional subspace on which D acts by sign}
implies that $\signisotypic$ has dimension $d_{\lambda}.$ This completes
the proof of Proposition \ref{prop:multiplicity of W =00005Cphi,=00005Clambda is exactly one},
which has the following corollary.
\begin{cor}
\label{cor:xi is in subspace iso to V^=00005Clambda =00005Cotimes V^=00005Clambda}For
any $\lambda\vdash k$, we have an isomorphism of representations
of $\skdelta\times S_{k}\cong S_{k}\times S_{k}$, 
\[
\mathcal{U}_{\lambda}\overset{\mathrm{def}}{=}\Big\langle\Delta(g,\sigma)\left(\normxilambda\right):\ g\in\skdelta,\ \sigma\in S_{k}\Big\rangle\cong V^{\lambda}\otimes V^{\lambda}.
\]
\end{cor}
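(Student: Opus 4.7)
The plan is to derive this corollary directly from the preceding proposition by restricting from $H_{k}\times S_{k}$ to $\skdelta\times S_{k}$ and identifying what happens to $W^{\emptyset,\lambda}$ under this restriction. No new representation-theoretic work is required; it is purely a bookkeeping step once the previous proposition is in hand.

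First, I would exploit the sign-invariance of $\normxilambda$ under $\mathcal{D}$: equation~(\ref{eq: D acts by determinant}) gives $\rho(s_{i})\normxilambda=-\normxilambda$ for each generator of $\mathcal{D}$. Since $H_{k}=\skdelta\ltimes\mathcal{D}$, every $h\in H_{k}$ decomposes uniquely as $h=\psi(\sigma')\delta$, and thus $\rho(h)\normxilambda=\pm\rho(\psi(\sigma'))\normxilambda$. It follows immediately that the two spans coincide as subspaces of $\cnk$:
\[
\Big\langle\Delta(h,\sigma)\normxilambda:\ h\in H_{k},\ \sigma\in S_{k}\Big\rangle_{\C}=\Big\langle\Delta(g,\sigma)\normxilambda:\ g\in\skdelta,\ \sigma\in S_{k}\Big\rangle_{\C}.
\]

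Next, I would invoke the preceding proposition, which identifies this common subspace with $W^{\emptyset,\lambda}\otimes V^{\lambda}$ as a representation of $H_{k}\times S_{k}$. To conclude, I only need to compute $\mathrm{Res}_{\skdelta}^{H_{k}}W^{\emptyset,\lambda}$. By Remark~\ref{rem:character of irreducible Hk rep}, the character of $W^{\emptyset,\lambda}$ evaluated at $\psi(\sigma')\delta$ equals $\chi^{\lambda}(\sigma')\chi_{\rho_{0}}(\delta)$; setting $\delta=\mathrm{Id}$ yields the character $\chi^{\lambda}$ of $V^{\lambda}$. Hence, under the isomorphism $\psi:S_{k}\xrightarrow{\sim}\skdelta$, we have $\mathrm{Res}_{\skdelta}^{H_{k}}W^{\emptyset,\lambda}\cong V^{\lambda}$, and the second factor $V^{\lambda}$ is unaffected by the restriction.

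Combining these observations gives an isomorphism of $\skdelta\times S_{k}\cong S_{k}\times S_{k}$ representations between the stated span and $V^{\lambda}\otimes V^{\lambda}$. The only mildly delicate point is the first step — recognising that the $\skdelta\times S_{k}$-span and the $H_{k}\times S_{k}$-span of $\normxilambda$ are literally equal as subspaces, which would otherwise require a dimension count; but the sign-invariance~(\ref{eq: D acts by determinant}) handles this cleanly. All substantive obstacles were already overcome in establishing the previous proposition.
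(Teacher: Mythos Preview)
Your proposal is correct and follows essentially the same route as the paper. The paper's proof is a one-liner invoking $\mathrm{Res}_{\skdelta}^{H_{k}}W^{\emptyset,\lambda}\cong V^{\lambda}$ and implicitly using irreducibility of $V^{\lambda}\otimes V^{\lambda}$ to conclude that the $\skdelta\times S_{k}$-span of $\normxilambda$ fills out the whole space; you instead show directly, via the sign-invariance under $\mathcal{D}$, that the $\skdelta\times S_{k}$-span and the $H_{k}\times S_{k}$-span literally coincide as subspaces, which is a clean and slightly more explicit alternative.
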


\begin{proof}
This follows since 
\[
W^{\emptyset,\lambda}\downarrow_{S_{k}^{\psi}}\cong V^{\lambda}.
\]
\end{proof}

\subsubsection{Identifying the Projection\label{subsec:IDENTIFYING THE PROJECTION}}

Since the irreducible characters of $S_{k}$ are integer valued, we
have an isomorphism of $S_{k}$ representations, $V^{\lambda}\cong\left(V^{\lambda}\right)^{\vee}$
and subsequent isomorphisms 
\begin{equation}
V^{\lambda}\otimes V^{\lambda}\cong\left(V^{\lambda}\right)^{\vee}\otimes V^{\lambda}\cong\mathrm{End}\left(V^{\lambda}\right).\label{eq: isomorphism of V^=00005Clambda=00005CotimesV^=00005ClambdatoEnd(V^=00005Clambda)}
\end{equation}
Noting that 
\[
\psi(\sigma)\normxilambda=\normxilambda\sigma
\]
then implies that 
\[
\normxilambda\in\mathrm{End}_{S_{k}}\left(V^{\lambda}\right)\subseteq\mathrm{End}\left(V^{\lambda}\right),
\]
when $\normxilambda$ is interpreted as an element of $\mathrm{End}\left(V^{\lambda}\right)$
through the isomorphisms presented in Corollary \ref{cor:xi is in subspace iso to V^=00005Clambda =00005Cotimes V^=00005Clambda}
and (\ref{eq: isomorphism of V^=00005Clambda=00005CotimesV^=00005ClambdatoEnd(V^=00005Clambda)}).
Then, by Schur's lemma, $\normxilambda$ must correspond to a complex
scalar multiple of $\mathrm{Id}_{V^{\lambda}},$ so that 
\begin{equation}
\normxilambda=c_{\lambda}\sum_{i}w_{i}\otimes w_{i}\in\mathcal{U}_{\lambda}\subseteq\mathcal{U}_{\lambda^{+}(n)}\label{eq: xi as element of V^=00005Clambda=00005CotimesV^=00005Clambda}
\end{equation}
for some complex scalar constant $c_{\lambda}$ and any orthonormal
basis $\{w_{i}\}$ of $V^{\lambda}.$ By Proposition \ref{prop: Constructing a subspace of cnk that is isomorphic to V^=00005Clambda+ =00005Cotimes V^=00005Clambda},
\[
\endoofvlambda\in\left(\mathcal{U}_{\lambda^{+}(n)}\right)^{\vee}\otimes\mathcal{U}_{\lambda^{+}(n)}.
\]
There are natural isomorphisms 
\[
\left(\mathcal{U}_{\lambda^{+}(n)}\right)^{\vee}\otimes\mathcal{U}_{\lambda^{+}(n)}\cong\left(V^{\lambda^{+}(n)}\right)^{\vee}\otimes\left(V^{\lambda}\right)^{\vee}\otimes V^{\lambda^{+}(n)}\otimes V^{\lambda}\cong\left(V^{\lambda^{+}(n)}\right)^{\vee}\otimes V^{\lambda^{+}(n)}\otimes\left(V^{\lambda}\right)^{\vee}\otimes V^{\lambda},
\]
where the second isomorphism is given by permuting the tensor coordinates.
Through these isomorphisms and (\ref{eq: xi as element of V^=00005Clambda=00005CotimesV^=00005Clambda}),
we write 
\begin{equation}
\endoofvlambda=c_{\lambda}^{2}\sum_{i,j}\check{w}_{i}\otimes w_{j}\otimes\check{w}_{i}\otimes w_{j}.\label{eq: xi =00005Cotimesxicheckinbasiselements}
\end{equation}
 Let 
\[
\tilde{\mathcal{Q}}_{\lambda,n}\overset{\mathrm{def}}{=}\int_{g\in S_{n}}g\check{\xi}_{\lambda}^{\mathrm{norm}}\otimes g\normxilambda dg=\frac{1}{n!}\sum_{g\in S_{n}}g\check{\xi}_{\lambda}^{\mathrm{norm}}\otimes g\normxilambda,
\]
where $g\normxilambda=\rho(g)\left(\normxilambda\right)$ and $g\check{\xi}_{\lambda}^{\mathrm{norm}}=\rho^{*}(g)\left(\check{\xi}_{\lambda}^{\mathrm{norm}}\right)=\left(\rho(g)\left(\normxilambda\right)\right)^{\vee}.$
Then we claim that 
\[
\tilde{\mathcal{Q}}_{\lambda,n}=\frac{1}{d_{\lambda^{+}(n)}d_{\lambda}}\mathcal{Q}_{\lambda,n},
\]
where $\mathcal{Q}_{\lambda}$ is the orthogonal projection map defined
in Theorem \ref{thm: MAIN THEOREM}. Indeed, using (\ref{eq: xi =00005Cotimesxicheckinbasiselements}),
we have 
\[
\tilde{\mathcal{Q}}_{\lambda,n}=\frac{c_{\lambda}^{2}}{n!}\sum_{g\in S_{n}}\sum_{i,j}g\check{w}_{i}\otimes gw_{j}\otimes\check{w}_{i}\otimes w_{j}.
\]
Since $\tilde{\mathcal{Q}}_{\lambda,n}$ commutes with $S_{n},$ then
by Schur's lemma this corresponds to an element of 
\[
\C\mathrm{Id}_{V^{\lambda^{+}(n)}}\otimes\left(V^{\lambda}\right)^{\vee}\otimes V^{\lambda},
\]
forcing $i=j$ so that the image of $\mathcal{Q}_{\lambda,n}$ in
$\mathrm{End}\left(\mathcal{U}_{\lambda^{+}(n)}\right)$ is
\[
\frac{b_{\lambda}}{n!}\left[\mathrm{Id}_{V^{\lambda^{+}(n)}}\otimes\mathrm{Id}_{V^{\lambda}}\right],
\]
which, when we extend by zero on $\mathcal{U}_{\lambda^{+}(n)}^{\perp}$
in $\cnk,$ is a scalar multiple of the orthogonal projection map
$\Q_{\lambda}:\cnk\to\mathcal{U}_{\lambda^{+}(n)}.$ Comparing traces
in $\cnk$ shows that $d_{\lambda^{+}(n)}d_{\lambda}\tilde{\mathcal{Q}}_{\lambda}=\mathcal{Q}_{\lambda}.$
This is summarised below.
\begin{thm}
\label{thm:orthogonal projection is multiple of integral of z}For
any $\lambda\vdash k,$ the orthogonal projection 
\[
\mathcal{Q}_{\lambda}:\cnk\to\mathcal{U}_{\lambda^{+}(n)}
\]
 is equal to $d_{\lambda^{+}(n)}d_{\lambda}\tilde{\mathcal{Q}}_{\lambda},$
where 
\[
\tilde{\mathcal{Q}}_{\lambda}=\int_{S_{n}}g\check{\xi}_{\lambda}^{\mathrm{norm}}\otimes g\normxilambda dg.
\]
 
\end{thm}

\subsection{Evaluating the Projection\label{subsec:CALCULATING THE PROJECTION}}

It remains only to compute 
\[
\tilde{\mathcal{Q}}_{\lambda}=\int_{S_{n}}g\check{\xi}_{\lambda}^{\mathrm{norm}}\otimes g\normxilambda dg
\]
using the Weingarten calculus for the symmetric group (\S\ref{SUBsubsec:THE WEINGARTEN CALCULUS FOR THE SYMMETRIC GROUP})
and evaluate this as an endomorphism of $\cnk.$ Let $z=\normxilambda\otimes\normxilambda$
and denote, for each $g\in S_{n},$ $gz=\rho(g)\normxilambda\otimes\rho(g)\normxilambda,$
so that $\tilde{\mathcal{Q}}_{\lambda}$ and 
\[
\int_{S_{n}}gzdg
\]
define the same element of $\mathrm{End}\left(\cnk\right)$ via the
canonical isomorphisms. We begin by computing, for fixed $\sigma,\sigma'\in S_{k}$
and $I=(i_{1},\dots,i_{2k})$, the coefficient of $e_{I}$ in 
\begin{equation}
\int_{S_{n}}g\left[\vsigmas\right]dg.\label{eq: inner product of projection of z and general e_I}
\end{equation}
Using Proposition \ref{thm:weingartenstrict}, the coefficient of
$e_{I}$ in (\ref{eq: inner product of projection of z and general e_I})
is 
\begin{equation}
\sum_{\pi\in\mathrm{Part}\left([2k]\right)}\pi^{\mathrm{strict}}\left(\vsigmas\right)\pi^{\mathrm{strict}}\left(e_{I}\right)\frac{1}{(n)_{|\pi|}}\label{eq:projection of v sigmas NOT explicit}
\end{equation}
and we evaluate these coefficients in the following sections.

\subsubsection{Conditions for $\pi^{\mathrm{strict}}$ to be non--zero\label{subsec:CONDITIONS FOR =00005Cpi^strict TO BE NON-ZERO}}

Obviously, if $i\neq j,$ we cannot have $v_{\sigma(i)}=v_{\sigma(j)}$
or $v_{\sigma'(i)}=v_{\sigma'(j)}$. So, if $\pi\in\part$ has any
two elements in $[k]$ in the same subset, or any two elements in
$\{k+1,\dots,2k\}$ in the same subset, then 
\[
\pi^{\mathrm{strict}}\left(\vsigmas\right)=0.
\]
This leaves only $\pi\in\part$ in which every element $i\in[k]$
is in a subset with \emph{exactly one }element $j\in\{k+1,\dots,2k\}$,
or is a singleton. Similarly, the elements $j\in[k+1,2k]$ may only
be in a subset with exactly one element $i\in[k]$ or they are singletons.
That is, $\pi\leq\iota(\tau)$ for some $\tau\in S_{k}.$ The only
possible candidate for $\tau$ is detailed in the next lemma. 
\begin{lem}
\label{lem:conditions for pi strict to be non zero}Given $\sigma,\sigma'\in S_{k}$
and $\pi\in\part,$ let $\tau=\sigma^{-1}\sigma'\in S_{k}.$ Then
\[
\pi^{\mathrm{strict}}\left(\vsigmas\right)=0
\]
 unless 
\begin{enumerate}
\item $\pi=\iota(\tau)$, or
\item $\pi\leq\iota(\tau)$.
\end{enumerate}
\end{lem}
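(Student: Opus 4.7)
The plan is to expand
\[
w_{\sigma,\sigma'} \overset{\mathrm{def}}{=} \vsigmas \in \cntwok
\]
as a signed sum of standard basis vectors using the identity $v_i = e_{2i-1} - e_{2i}$, and then to track which set partitions $\pi \in \part$ can arise as the ``equality pattern'' of some term in this expansion. Since $\pi^{\mathrm{strict}}$ is extended linearly from its definition on basis vectors and vanishes on any basis vector whose equality pattern differs from $\pi$, the non-vanishing of $\pi^{\mathrm{strict}}(w_{\sigma,\sigma'})$ forces $\pi$ to be realised as the equality pattern of at least one basis vector appearing in the expansion.

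A generic term in the expansion is parametrised by $(\epsilon_1, \dots, \epsilon_{2k}) \in \{0,1\}^{2k}$ and has index vector given by $j_a = 2\sigma(a) - 1 + \epsilon_a$ for $a \in [k]$ and $j_{k+a} = 2\sigma'(a) - 1 + \epsilon_{k+a}$ for $a \in [k]$. For distinct $a, b \in [k]$, the pairs $\{2\sigma(a)-1, 2\sigma(a)\}$ and $\{2\sigma(b)-1, 2\sigma(b)\}$ are disjoint, so $j_a \neq j_b$; similarly $j_{k+a} \neq j_{k+b}$ for distinct $a, b \in [k]$. Hence the equality pattern of any term contains no pairs lying entirely within the top row $[k]$ or entirely within the bottom row $\{k+1, \dots, 2k\}$. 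The only possible equalities cross the two rows, and $j_a = j_{k+b}$ forces both $\sigma(a) = \sigma'(b)$ and $\epsilon_a = \epsilon_{k+b}$.

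The condition $\sigma(a) = \sigma'(b)$ is equivalent to $b = \sigma'^{-1}\sigma(a) = \tau^{-1}(a)$ with $\tau = \sigma^{-1}\sigma'$, which is precisely the relation $a \sim_{\iota(\tau)} k+b$ built into the definition of $\iota(\tau)$. It follows that in any term of the expansion, the equality pattern is a set partition whose non-singleton blocks are all blocks of $\iota(\tau)$, while every singleton block $\{a\}$ (resp.\ $\{k+b\}$) is contained in the pair of $\iota(\tau)$ through $a$ (resp.\ $k+b$); that is, the equality pattern is $\leq \iota(\tau)$ in the refinement order on $\part$. The non-vanishing of $\pi^{\mathrm{strict}}(w_{\sigma,\sigma'})$ therefore forces $\pi \leq \iota(\tau)$, which is the conclusion of the lemma. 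There is no substantive obstacle here: the argument is a careful unwinding of the definitions of $v_i$, of $\iota$, and of the refinement order on $\part$.
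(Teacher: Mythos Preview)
Your proof is correct and takes essentially the same approach as the paper: both arguments rest on the observation that in the expansion of $\vsigmas$ into standard basis vectors, the index in position $a\in[k]$ can coincide with the index in position $k+b$ only when $\sigma(a)=\sigma'(b)$, i.e.\ $b=\tau^{-1}(a)$, so every equality pattern appearing is a refinement of $\iota(\tau)$. The paper packages this as a proof by contradiction on a single offending block of $\pi$, while you characterise all equality patterns directly; the underlying content is identical.
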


\begin{proof}
Fix any $\sigma,\sigma'\in S_{k}.$ Recall that $\iota(\tau)$ is
the partition $\Big\{\{1,k+\tau^{-1}(1)\},\dots,\{k,k+\tau^{-1}(k)\}\Big\}.$
Suppose, towards a contradiction, that $\pi\neq\iota(\tau)$ and $\pi\nleq\iota(\tau)$,
but that 
\[
\pi^{\mathrm{strict}}\left(\vsigmas\right)\neq0.
\]
Then, by definition, there exists a subset $S$ in $\pi$ that is
not contained inside any of the subsets in $\iota(\tau).$ Suppose
$S$ contains $i\in[k]$ and $k+j,$ where $j\in[k]\backslash\{\tau^{-1}(i)\}$
as well (by the previous discussion, if $S$ contains another element
in $[k],$ then 
\[
\pi^{\mathrm{strict}}\left(\vsigmas\right)=0
\]
and, if $S$ contains only the element $i$ or the element $k+\tau^{-1}(i),$
then $S$ would be contained inside one of the subsets in $\iota(\tau)$).
Then, for any $L=(l_{1},\dots,l_{2k}),$ 
\[
\pi^{\mathrm{strict}}\left(e_{L}\right)\neq0\implies l_{i}=l_{k+j}.
\]
 Observe that 
\[
\begin{aligned} & v_{\sigma(1)}\otimes\dots\otimes v_{\sigma(k)}\otimes v_{\sigma'(1)}\otimes\dots\otimes v_{\sigma'(k)}\\
= & \ v_{\sigma(1)}\otimes\dots\otimes e_{2\sigma(i)-1}-e_{2\sigma(i)}\otimes\dots\otimes v_{\sigma(k)}\\
 & \ \otimes v_{\sigma'(1)}\otimes\dots\otimes e_{2\sigma'(j)-1}-e_{2\sigma'(j)}\otimes\dots\otimes v_{\sigma'(k)}\\
= & \ v_{\sigma(1)}\otimes\dots\otimes e_{2\sigma(i)-1}\otimes\dots\otimes v_{\sigma(k)}\otimes v_{\sigma'(1)}\otimes\dots\otimes e_{2\sigma'(j)-1}\otimes\dots\otimes v_{\sigma'(k)}\\
+ & \ v_{\sigma(1)}\otimes\dots\otimes e_{2\sigma(i)}\otimes\dots\otimes v_{\sigma(k)}\otimes v_{\sigma'(1)}\otimes\dots\otimes e_{2\sigma'(j)}\otimes\dots\otimes v_{\sigma'(k)}\\
- & \ v_{\sigma(1)}\otimes\dots\otimes e_{2\sigma(i)}\otimes\dots\otimes v_{\sigma(k)}\otimes v_{\sigma'(1)}\otimes\dots\otimes e_{2\sigma'(j)-1}\otimes\dots\otimes v_{\sigma'(k)}\\
- & \ v_{\sigma(1)}\otimes\dots\otimes e_{2\sigma(i)-1}\otimes\dots\otimes v_{\sigma(k)}\otimes v_{\sigma'(1)}\otimes\dots\otimes e_{\sigma'(j)}\otimes\dots\otimes v_{\sigma'(k)}.
\end{aligned}
\]
So, if $\pi^{\mathrm{strict}}\left(\vsigmas\right)\neq0,$ then either
$\sigma(i)=\sigma'(j),$ which implies $(\sigma')^{-1}\sigma(i)=\tau^{-1}(i)=j$,
a contradiction \textbf{or} $2\sigma(i)=2\sigma'(j)-1$ \emph{or}
$2\sigma(i)-1=2\sigma'(j)$, which is obviously not possible. So,
in any case, we have a contradiction.
\end{proof}
Refinements of $\iota(\tau)$ are obtained by either splitting each
subset $\{i,k+\tau^{-1}(i)\}$ into two subsets $\{i\}$ and $\{k+\tau^{-1}(i)\}$,
or leaving them as they are, which corresponds to `deleting' edges
from the diagram, and $\pi^{\mathrm{strict}}\left(\vsigmas\right)$
can be evaluated in terms of the number of edges deleted.
\begin{rem*}
If $\pi$ is obtained from $\iota(\tau)$ by deleting $p$ edges,
then $|\pi|=|\iota(\tau)|+p=k+p$. 
\end{rem*}

\subsubsection{Evaluating $\pi^{\mathrm{strict}}$ }
\begin{lem}
\label{lem:pi strict of v sigma} Let $\sigma,\sigma'\in S_{k}$ and
let $\tau=\sigma^{-1}\sigma'$. Then, if $\pi=\iota(\tau)$ or $\pi\leq\iota(\tau),$
\[
\pi^{\mathrm{strict}}\left(\vsigmas\right)=(-1)^{|\pi|-k}2^{k}.
\]
\end{lem}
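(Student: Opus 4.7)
The plan is to expand the tensor $\vsigmas$ as a signed sum of $2^{2k}$ standard basis vectors of $\cntwok$, determine for each resulting basis vector which set partition of $[2k]$ it defines, and then collect all contributions whose partition equals $\pi$. Since $v_l = e_{2l-1} - e_{2l}$, each basis vector in the expansion is obtained by choosing, at each of the $2k$ tensor positions, one of two indices, and the accompanying sign is $(-1)$ to the number of positions where the ``$-$'' option is chosen.

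The key combinatorial observation is that the sets $\{2\sigma(i)-1,2\sigma(i)\}$ for $i\in[k]$ are pairwise disjoint, so in any expanded basis vector the values at distinct positions in $[k]$ are automatically distinct, and similarly for positions in $\{k+1,\dots,2k\}$. Moreover, the left value at position $i$ and the right value at position $k+j$ can coincide only if $\{2\sigma(i)-1,2\sigma(i)\}\cap\{2\sigma'(j)-1,2\sigma'(j)\}\neq\emptyset$, i.e.\ iff $\sigma(i)=\sigma'(j)$, iff $j=\sigma'^{-1}\sigma(i)=\tau^{-1}(i)$. This already reflects the structure of $\iota(\tau)$ and ties in with Lemma~\ref{lem:conditions for pi strict to be non zero}.

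Now fix $\pi\leq\iota(\tau)$ and let $S\subseteq[k]$ be the set of indices $i$ for which the pair $\{i,k+\tau^{-1}(i)\}$ is intact in $\pi$; the remaining $p=k-|S|$ pairs are split into two singletons, so $|\pi|=k+p$. An expanded basis vector defines the partition $\pi$ if and only if, for $i\in S$, positions $i$ and $k+\tau^{-1}(i)$ take the same index (both $2\sigma(i)-1$ or both $2\sigma(i)$), while for $i\notin S$ they take different indices. The previous observation guarantees that no spurious identifications occur between positions not matched by $\iota(\tau)$. Each $i\in S$ therefore contributes $(+1)(+1)+(-1)(-1)=+2$ to the coefficient, and each $i\notin S$ contributes $(+1)(-1)+(-1)(+1)=-2$, yielding
\[
\pi^{\mathrm{strict}}(\vsigmas)=2^{|S|}(-2)^{p}=(-1)^{p}2^{k}=(-1)^{|\pi|-k}2^{k},
\]
as required. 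The main (mild) obstacle is simply keeping the bookkeeping of signs and of the disjointness of index sets straight, which is handled by the observation in the previous paragraph.
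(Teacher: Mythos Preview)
Your proof is correct and follows essentially the same approach as the paper: expand $\vsigmas$ into $2^{2k}$ signed basis vectors, use the disjointness of the index pairs $\{2\sigma(i)-1,2\sigma(i)\}$ to see that equalities can occur only between positions $i$ and $k+\tau^{-1}(i)$, and then count the contributions according to whether each pair in $\iota(\tau)$ is intact or split in $\pi$. Your presentation is slightly more streamlined in that you factor the contribution over the $k$ pairs directly (each yielding $+2$ or $-2$), whereas the paper first isolates the $2^{k}$ ``$\pi$--permissible'' tensors and then argues separately that each carries the sign $(-1)^{|\pi|-k}$; but the underlying argument is the same.
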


\begin{proof}
We will prove this by induction on the number of edges deleted from
$\iota(\tau)$. The base case is where $\pi=\iota(\tau).$ In this
case, $|\pi|=k.$ We can write $\vsigmas$ as a sum of $2^{2k}$ standard
basis vectors $e_{I}\otimes e_{J}$, with coefficients either $1$
or $-1$. In this case, $\pi^{\mathrm{strict}}(e_{I}\otimes e_{J})=1$
if and only if, for each $i\in[k],$ the $i^{\mathrm{th}}$ coordinate
in $I$ matches the $\tau^{-1}(i)$ coordinate in $J$ exactly. If
this is the case, the coefficient of $e_{I}\otimes e_{J}$ is $1$
and there are $2^{k}$ possible pairings, since for each of the $2^{k}$
possible $I,$ there is exactly one $J$ such that $\pi^{\mathrm{strict}}(e_{I}\otimes e_{J})=1.$ 

Suppose that the claim is true for all $\pi\leq\iota(\tau)$ of size
$\leq p$ and let $\pi\leq\iota(\tau)$ have $|\pi|=p+1.$ Add an
edge $e$ from a singleton $j$ to $k+\tau^{-1}(j)$ to obtain $\pi',$
so that $\pi'\leq\iota(\tau)$ and $|\pi'|=p$, implying that 
\[
\left(\pi'\right)^{\mathrm{strict}}\left(\vsigmas\right)=(-1)^{p-k}2^{k}.
\]
For every $(I,J)$ such that $\text{\ensuremath{\left(\pi'\right)^{\mathrm{strict}}}}\left(e_{I}\otimes e_{J}\right)=1,$
there is exactly one $(I,J^{-})$ such that $\pi^{\mathrm{strict}}\left(e_{I}\otimes e_{J^{-}}\right)=1,$
obtained by swapping the $(\tau^{-1}(j))^{\mathrm{th}}$ coordinate
in $J^{-}$ so that it no longer matches the $j^{\mathrm{th}}$ coordinate
in $I.$ The coefficient of $e_{I}\otimes e_{J^{-}}$ in $\vsigmas$
is $-1$ multiplied by the coefficient of $e_{I}\otimes e_{J}$, so
that 
\[
\pi^{\mathrm{strict}}\left(\vsigmas\right)=-(-1)^{p-k}2^{k}.
\]
 
\end{proof}

\subsubsection{Evaluating the projection}

Lemmas \ref{lem:conditions for pi strict to be non zero} and \ref{lem:pi strict of v sigma}
and (\ref{eq:projection of v sigmas NOT explicit}) imply that, for
each fixed $\sigma,\sigma'\in S_{k},$ 
\[
\begin{aligned} & \int_{S_{n}}g\left(\vsigmas\right)dg\\
= & \sum_{I}\left[\sum_{\pi\leq\iota(\tau)}\pi^{\mathrm{strict}}\left(e_{I}\right)\frac{1}{(n)_{|\pi|}}(-1)^{|\pi|-k}2^{k}\right]e_{I},
\end{aligned}
\]
where the sum is over all multi-indices $I$ that correspond to a
basis vector of $\cntwok$, and $\tau=\sigma^{-1}\sigma'$. Since
\[
z=\frac{1}{2^{k}k!}\sum_{\sigma,\sigma'\in S_{k}}\chi^{\lambda}(\sigma)\chi^{\lambda}(\sigma')\left[v_{\sigma(1)}\otimes\dots\otimes v_{\sigma(k)}\otimes v_{\sigma'(1)}\otimes\dots\otimes v_{\sigma'(k)}\right],
\]
we have 
\begin{equation}
\int_{S_{n}}gzdg=\frac{(-1)^{-k}}{k!}\sum_{\sigma,\sigma'\in S_{k}}\chi^{\lambda}(\sigma)\chi^{\lambda}(\sigma')\sum_{\pi\leq\iota\left(\sigma^{-1}\sigma'\right)}(-1)^{|\pi|}\frac{1}{(n)_{|\pi|}}\sum_{I}\pi^{\mathrm{strict}}\left(e_{I}\right)e_{I}.\label{eq: projection to Sn invariants as element of cntwok strict}
\end{equation}

\subsubsection{Proof of Main Theorem\label{subsec:MAIN THEOREM PROOF}}
\begin{proof}[Proof of Theorem \ref{thm: MAIN THEOREM}]
We take 
\[
\mathcal{U}_{\lambda^{+}(n)}=\Big\langle\Delta(g,\sigma)\left(\xi_{\lambda}\right):\ g\in S_{n},\ \sigma\in S_{k}\Big\rangle_{\C}\cong V^{\lambda^{+}(n)}\otimes V^{\lambda}.
\]
By Theorem \ref{thm:orthogonal projection is multiple of integral of z},
the orthogonal projection $\mathcal{Q}_{\lambda,n}$ is the image
of $d_{\lambda}d_{\lambda^{+}(n)}\tilde{\mathcal{Q}}_{\lambda,n}$
inside $\mathrm{End}\left(\cnk\right)$. Combining (\ref{eq: projection to Sn invariants as element of cntwok strict})
with the character orthogonality formula 
\[
\frac{1}{k!}\sum_{\sigma\in S_{k}}\chi^{\lambda}(\sigma)\chi^{\lambda}(\sigma\tau)=\frac{\chi^{\lambda}(\tau)}{d_{\lambda}}
\]
and Remark \ref{rem: P_pi strict definition}, one sees that this
is exactly (\ref{eq: strict orthogonal projection formula}).
\end{proof}
\bibliographystyle{amsalpha}
\bibliography{C:/Users/ewanc/OneDrive/Documents/Arxiv/library}

\noindent Ewan Cassidy, Department of Mathematical Sciences, Durham
University, Lower Mountjoy, DH1 3LE, Durham, United Kingdom

\noindent ewan.g.cassidy@durham.ac.uk
\end{document}